\theoremstyle{definition}
\crefname{subsection}{subsection}{subsections}
\crefname{equation}{}{}
\crefname{section}{section}{sections}
\newtheorem{theorem}{Theorem}[section]
\newtheorem{example}[theorem]{Example}
\newtheorem{remark}[theorem]{Remark}
\newtheorem{definition}[theorem]{Definition}
\newtheorem{lemma}[theorem]{Lemma}
\newtheorem{corollary}[theorem]{Corollary}
\newtheorem{proposition}[theorem]{Proposition}
\newcommand{\R}{\mathbb{R}}
\newcommand{\C}{\mathbb{C}}
\newcommand{\Z}{\mathbb{Z}}
\newcommand{\N}{\mathbb{N}}
\newcommand{\Q}{\mathbb{Q}}
\newcommand{\A}{\mathds{A}}
\newcommand{\Sp}{\mathds{S}}
\DeclareMathOperator{\MSL}{MSL}
\DeclareMathOperator{\MGL}{MGL}
\DeclareMathOperator{\MU}{MU}
\DeclareMathOperator{\MSU}{MSU}
\DeclareMathOperator{\SU}{SU}
\DeclareMathOperator{\SL}{SL}
\DeclareMathOperator{\SH}{SH}
\DeclareMathOperator{\eff}{eff}
\DeclareMathOperator{\BSL}{BSL}
\DeclareMathOperator{\BGL}{BGL}
\DeclareMathOperator{\HZ}{H\Z}
\DeclareMathOperator{\Preshv}{\mathcal{P}}
\DeclareMathOperator{\Th}{th}
\DeclareMathOperator{\GL}{GL}
\DeclareMathOperator{\Taut}{\mathcal{T}}
\DeclareMathOperator{\Gr}{Gr}
\DeclareMathOperator{\SGr}{SGr}
\DeclareMathOperator{\T}{T}
\DeclareMathOperator{\Hom}{Hom}
\DeclareMathOperator{\SmS}{Sm}
\DeclareMathOperator{\Spc}{\mathcal{S}pc}
\DeclareMathOperator{\thom}{th}
\DeclareMathOperator{\Gm}{\mathds{G}_m}
\newcommand{\colim@}[2]{%
  \vtop{\m@th\ialign{##\cr
    \hfil$#1\operator@font colim$\hfil\cr
    \noalign{\nointerlineskip\kern1.5\ex@}#2\cr
    \noalign{\nointerlineskip\kern-\ex@}\cr}}%
}
\newcommand{\colim}{%
  \mathop{\mathpalette\colim@{\rightarrowfill@\textstyle}}\nmlimits@
}
\newcommand{\lim@}[2]{%
  \vtop{\m@th\ialign{##\cr
    \hfil$#1\operator@font lim$\hfil\cr
    \noalign{\nointerlineskip\kern1.5\ex@}#2\cr
    \noalign{\nointerlineskip\kern-\ex@}\cr}}%
}
\newcommand{\dirlim}{%
 \mathop{\mathpalette\lim@{\leftarrowfill@\textstyle}}\nmlimits@
}
\newcommand{\holim@}[2]{%
  \vtop{\m@th\ialign{##\cr
    \hfil$#1\operator@font holim$\hfil\cr
    \noalign{\nointerlineskip\kern1.5\ex@}#2\cr
    \noalign{\nointerlineskip\kern-\ex@}\cr}}%
}
\newcommand{\holim}{%
  \mathop{\mathpalette\holim@{\leftarrowfill@\textstyle}}\nmlimits@
}
\newcommand{\hocolim@}[2]{%
  \vtop{\m@th\ialign{##\cr
    \hfil$#1\operator@font hocolim$\hfil\cr
    \noalign{\nointerlineskip\kern1.5\ex@}#2\cr
    \noalign{\nointerlineskip\kern-\ex@}\cr}}%
}
\newcommand{\hocolim}{%
  \mathop{\mathpalette\hocolim@{\rightarrowfill@\textstyle}}\nmlimits@
}
\newcommand{\hocofib}{\operatorname{hocofib}}
\newcommand{\cofib}{\operatorname{cofib}}
\newcommand{\ncolim}{\operatorname{colim}}
\newcommand{\nhocolim}{\operatorname{hocolim}}
\newcommand{\nholim}{\operatorname{holim}}
\newcommand{\nlim}{\operatorname{lim}}
\title{An interpolation between special linear and general algebraic cobordism $\text{MSL}$  and $\text{MGL}$}
\author{Ahina Nandy\thanks{\href{mailto:ahina.nandy@uni-osnabrueck.de}{ahina.nandy@ru.nl}}}
\affil{Radboud Universiteit, Nijmegen}
\date{\vspace{-8ex}}
\begin{document}
\maketitle
\begin{abstract}
 Conner and Floyd determined the torsion in the special unitary bordism $\MSU$ back in the late 1960s. One of the ingredients of their work was an interpolation between $\MSU$ and unitary bordism $\MU$. In this work, we prove an exactly similar relation between the special, and general linear algebraic cobordism $\MSL$, and $\MGL$ in the stable $\A^1$-homotopy category over any Noetherian base scheme of finite Krull dimension. This gives a filtration of $\MGL$ in terms of $\MSL$, and the skeletal filtration of $\mathds{P}^{\infty}$. Using that, we compute the first slice of $\MSL$ over characteristic zero fields. We also compute the first line of the stable homotopy groups of $\MSL$ over fields of characteristic zero.    
\end{abstract}
\tableofcontents
\section{Introduction}
\subsection*{Some stories from topology}
The second half of the twentieth century has seen a spectacular development in the theories of bordism and cobordism. One of the most significant achievements was the identification of the complex (or unitary) bordism ring $\Omega^{U}$ with the graded integral polynomial ring $\Z[a_i | i\geq 1]$, with one generator in every even degree, by Milnor \cite{Milnor}, and Novikov \cite{novikovFirst, novikov}. Later, Quillen's \cite{Quillen} construction of an explicit isomorphism from $\Omega^{U}$ to the Lazard ring, and identification of the $\MU$-formal group law as the universal one, has changed the course of homotopy theory. The notion of complex oriented cohomology theories and the universal complex oriented theory, complex cobordism ($\MU$) has a center stage in classical stable homotopy theory.\\[1.00000ex]
Special unitary ($\SU$)-orientation is a more subtle form of orientation of cohomology theories, and special unitary cobordism $\MSU$ is the universal one among the theories with such orientation. There is a recent resurgence in studying $\SU$-manifolds and $\MSU$ bordism in connection to the study of mirror symmetry \cite{Mirror}. Throughout the 1960s, different approaches to understanding $\MSU$-bordism showed the limitations of the available computational tools and techniques. The coefficient ring $\Omega^{SU}$ is not a polynomial ring. Unlike $\Omega^{U}$, $\MSU_*$ has torsion. Novikov \cite[Appendix 1]{novikov} showed that after inverting $2$, it becomes a polynomial ring. We have 
\[\Omega^{SU} \otimes \Z[1/2] \cong \Z[1/2][x_i | i \geq 2]\]
Here each $x_i$ has degree $2i$. Conner-Floyd determined the torsion in $SU$-bordism \cite{SU_Bordism}. Wall \cite{Wall_1966} gave a description of the multiplicative structure of $\Omega^{SU}/ \textit{torsion}$. These works gave an almost complete picture of the coefficient ring of $\MSU$.\\[1.000ex]
The most crucial part of Conner-Floyd's work was \cite{SU_Bordism} the construction of the group of spherical classes $\mathcal{W}_*$, and a relation of that with $\Omega^{SU}$. One of the important technical tools for this identification was the following isomorphism \cite[Theorem 17.1]{SU_Bordism}
\begin{equation} \label{msuCFIntro}
 \MSU \wedge \Sigma^{-2}\Sigma^{\infty} \mathbb{C P}^{\infty} \xrightarrow{\cong} \MU   
\end{equation}
In this work, we take a small step in generalizing Conner-Floyd's approach to studying $\MSU$ in the realm of $\A^1$-homotopy theory. 
\subsection*{Some context in $\mathds{A}^1$-homotopy theory} The notion of oriented cohomologies, in other words, theories having Thom isomorphism for all vector bundles is quite well studied in $\A^1$-homotopy theory. Motivic cohomology ($\HZ$), and algebraic $K$-theory ($\text{KGL}$) are examples of such theories \cite{PaninOriented2}. Voevodsky's algebraic cobordism $\MGL$ is the universal oriented cohomology theory in the stable motivic homotopy category $\SH(S)$ \cite{PPR}.\\[1ex]
Panin-Walter \cite{PW} constructed special linear algebraic cobordism spectrum $\MSL$ and proved its universality among $\SL$ oriented theories (in other words, theories with Thom isomorphism for vector bundles with trivialized determinant bundle). There are interesting theories like Balmer's higher Witt theory, Hermitian $K$-theory, and Milnor-Witt motivic cohomology, which are $\text{SL}$-oriented but not oriented.\\[1.1ex]
In the realm of stable $\A^1$-homotopy theory, there are two elements $2 \in \pi_{0,0}(\mathds{1})$, and $(-\eta) \in \pi_{1,1}(\mathds{1})$, which under real realization go to $2$ \cite[Proposition 3.1.3]{etaRealRealization}. In general $2$ is not invertible in interesting theories like Witt groups. Ananyevskiy \cite{SLprojectiveBundleTheorem} showed projective bundle theorem for $\text{SL}$-oriented theories after inverting $\eta$. For more justification regarding inverting $\eta$ to better understand $\text{SL}$-oriented theories see \cite[Section 6]{SLprojectiveBundleTheorem}. Bachmann-Hopkins \cite[Theorem 8.8]{bachhopetaperiod} computed the homotopy groups of $\MSL[1/\eta]$ over fields. In our work, we generalize the isomorphism of \cref{msuCFIntro} in the stable $\A^1$-homotopy category.
\subsection*{What exactly we have done} By construction there is a ring spectrum map $\MSL \to \MGL$. For $\MGL$, we have a canonical orientation map $\Sigma^{-2,-1}\Sigma^{\infty} \mathds{P}^{\infty} \to \MGL$. Combining these two and the ring structure of $\MGL$, we get a map
\begin{equation} \label{main_question_intro}
    \Psi: \MSL \wedge \Sigma^{-2,-1}\Sigma^{\infty} \mathbb{P}^{\infty} \to \MGL
\end{equation}
Taking its complex realization, we recover the map in \cref{msuCFIntro}. We show the following.
\begin{theorem} \label{resultIntro} (\cref{onlyresultZ})
    Let $S$ be a Noetherian scheme of finite Krull dimension. Then the map \[\Psi: \MSL \wedge \Sigma^{-2,-1}\Sigma^{\infty} \mathbb{P}^{\infty} \to \MGL\] is an isomorphism in $\SH(S)$. 
\end{theorem}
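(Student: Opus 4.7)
The plan is to build matching cellular filtrations on the two sides of $\Psi$ and compare them inductively. The skeletal filtration $\mathbb{P}^1 \hookrightarrow \mathbb{P}^2 \hookrightarrow \cdots$ of $\mathbb{P}^{\infty}$, pointed at $\mathbb{P}^0$, yields a filtration $F_{k} := \MSL \wedge \Sigma^{-2,-1}\Sigma^{\infty} \mathbb{P}^{k+1}$ of the left-hand side with $F_0 \simeq \MSL$ and successive cofibers $F_{k}/F_{k-1} \simeq \Sigma^{2k,k}\MSL$, using the standard motivic identification $\mathbb{P}^{k+1}/\mathbb{P}^{k} \simeq S^{2(k+1),k+1}$.

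To produce a matching filtration of $\MGL$, I will exploit the short exact sequence $1 \to \SL_n \to \GL_n \xrightarrow{\det} \Gm \to 1$ and the resulting fiber sequence $\BSL_n \to \BGL_n \xrightarrow{\det} B\Gm \simeq \mathbb{P}^{\infty}$. Pulling back the skeletal filtration of $\mathbb{P}^\infty$ along $\det$ produces closed subschemes $B_n^{(k)} := \det^{-1}(\mathbb{P}^k) \subset \BGL_n$ with $B_n^{(0)} = \BSL_n$. Setting
\[G_k := \operatorname{colim}_n \Sigma^{-2n,-n}\operatorname{Th}(\gamma_n|_{B_n^{(k)}})\]
gives a filtration of $\MGL$ with $G_0 \simeq \MSL$ and $\operatorname{colim}_k G_k \simeq \MGL$.

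The central computation is the identification $G_k/G_{k-1} \simeq \Sigma^{2k,k}\MSL$. Since the open cell $\A^k = \mathbb{P}^k \setminus \mathbb{P}^{k-1}$ is $\A^1$-contractible, the fibration $\BGL_n \to B\Gm$ trivializes over $\A^k$ motivically: $U_n^{(k)} := \det^{-1}(\A^k) \simeq \BSL_n \times \A^k$, and $\gamma_n|_{U_n^{(k)}}$ is pulled back from the tautological bundle on $\BSL_n$. Motivic purity applied to the pair $(\mathbb{P}^k, \mathbb{P}^{k-1})$, lifted along $\det$ and then thomified by $\gamma_n$, yields the claimed shift. Compatibility of $\Psi$ with both filtrations follows from the observation that the orientation $\Sigma^{-2,-1}\Sigma^{\infty}\mathbb{P}^{\infty} \to \MGL$ is classified by the universal line bundle on $B\Gm$, which is precisely the data used to build $G_{\bullet}$. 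An induction on $k$, using the morphism of cofiber sequences
\[F_{k-1} \to F_k \to \Sigma^{2k,k}\MSL, \qquad G_{k-1} \to G_k \to \Sigma^{2k,k}\MSL,\]
starting from $F_0 \simeq G_0 \simeq \MSL$, extends the equivalence through all filtration degrees, and passing to the colimit yields $\Psi: \operatorname{colim}_k F_k \xrightarrow{\sim} \operatorname{colim}_k G_k \simeq \MGL$.

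The main obstacle will be the precise identification of the cofiber $B_n^{(k)}/B_n^{(k-1)}$: this is the quotient by a closed subspace, which is distinct from the purity quotient $B_n^{(k)}/(B_n^{(k)} \setminus B_n^{(k-1)}) \simeq \operatorname{Th}(N)$ with $N$ the pulled-back normal bundle. One has to argue, via a Mayer--Vietoris-type gluing that exploits the triviality of the fibration over the open cell, that $B_n^{(k)}/B_n^{(k-1)} \simeq (\BSL_n)_+ \wedge S^{2k,k}$ in $\SH(S)$, and then track Thomification by $\gamma_n$ compatibly. A secondary subtlety is ensuring that $\Psi$ truly induces the identity on each graded piece $\Sigma^{2k,k}\MSL$, which reduces to matching powers of the first Chern class on the orientation side with the successive steps of the determinant-induced filtration on the $\MGL$ side.
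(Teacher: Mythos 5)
Your strategy is genuinely different from the paper's, and it is appealing in spirit: it resembles a Serre-filtration argument comparing the skeletal filtration of the base of a bundle with an induced filtration of the total space. Unfortunately the construction of the filtration $G_{\bullet}$ on $\MGL$ has a basic error, and the key identification of cofibers is not justifiable by the tools you invoke.

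The central problem is that you are conflating the homotopy fiber of the classifying map $\det\colon \BGL_n \to B\Gm$ with a scheme-theoretic preimage. The homotopy fiber of $\det$ is $\BSL_n$, but the geometric preimage $\det^{-1}(\mathbb{P}^0)$ is something else entirely. Concretely, $\det$ is modeled at a finite stage by the Pl\"ucker embedding $\Gr(n,N) \hookrightarrow \mathbb{P}(\Lambda^n \mathcal{O}^N)$, which is a closed immersion; the preimage of a point in the target is (at most) a point, not a copy of $\SGr(n,N)$. So the $G_{\bullet}$ you write down does not begin at $\MSL$ as claimed, and in fact bears little relation to the intended object. More broadly, the preimages $\det^{-1}(\mathbb{P}^k)$ under the Pl\"ucker map are closed subschemes whose geometry is not under control, and the associated filtration is unlikely to have the graded pieces you want. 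Relatedly, the assertion that the fibration trivializes over the open cell $\A^k$ because $\A^k$ is $\A^1$-contractible does not hold in the motivic setting: $\A^1$-contractibility of the base does not trivialize a fibration, and there is no motivic Serre spectral sequence that would give you the Leray-type identification $G_k/G_{k-1} \simeq \Sigma^{2k,k}\MSL$. This is precisely the obstacle you flag as the ``main obstacle'' but do not resolve, and I do not see how the Mayer--Vietoris gluing you gesture at would close it.

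Even if $G_{\bullet}$ could be built and the cofibers identified, you would still need to prove that $\Psi$ is compatible with both filtrations and induces an equivalence on each graded piece; you identify this as a ``secondary subtlety'' but offer no argument. Verifying it requires essentially the same first-Chern-class computation in oriented cohomology that the paper carries out in \cref{BSLn} and \cref{Eiso}, so nothing is saved. The paper's proof instead proceeds by: (i) showing $\Psi\wedge E$ is an equivalence for oriented $E$ with additive formal group law via explicit $E$-(co)homology computations of $\MSL$ and $\BSL_n$ (\cref{first_computation}, \cref{MSLsmashEequivalence}); (ii) showing $\Psi[1/\eta]$ is an equivalence of contractible spectra (\cref{LevineIdeaForGeneralBase}); (iii) upgrading (i) to an equivalence after $\eta$-completion by comparing $\eta$-, $\HZ$-, and $\MGL$-completions through the homotopy $t$-structure (\cref{PsiIntegralActual}); (iv) gluing via the $\eta$-fracture square (\cref{onlyresult}); and (v) descending from perfect fields to arbitrary Noetherian $S$ via base change and Morel--Voevodsky localization (\cref{detecting0inSHZ}, \cref{onlyresultZ}). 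If you want to pursue a filtration argument, you would need either a motivic Serre-type comparison for the $\Gm$-torsor $\BSL_n \to \BGL_n$ that is not currently available, or to verify the cofiber identification by cohomological computations directly, which converges back to the paper's method.
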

The main idea of the argument is sketched below. 
\begin{itemize}
    \item First we show that the map $\Psi$ induces isomorphism in $E$ homology for some oriented motivic spectrum $E \in \SH(k)$ with additive formal group law. In other words, $\Psi \wedge E$ is an isomorphism in $\SH(k)$, for any perfect field $k$ (\cref{MSLsmashEequivalence}). In particular we get that $\Psi \wedge \HZ$ is an isomorphism.
    \item Let $\eta \in \pi_{1,1}(\mathds{1})$ be the first stable Hopf map in the (1,1)-stable homotopy group of the motivic sphere spectrum. We show that after inverting $\eta$, the map $\Psi[1/\eta]$ is an isomorphism ( \cref{LevineIdeaForGeneralBase}).\\[1.000000ex]
    The main idea here is that after inverting $\eta$, both, the domain and the codomain of the map $\Psi$ become contractible in $\SH(k)$, for any field $k$. So, $\Psi[1/\eta]$ is in fact an isomorphism of contractible spectra.
    \item We also show that when we work over a perfect field $k$, after $\eta$-completion, the map $\Psi^{\wedge}_{\eta}$ is an isomorphism in $\SH(k)$ (\cref{PsiIntegralActual}).\\[1.00000ex]
    As we have shown before that $\Psi \wedge \HZ$ is an isomorphism, $\Psi^{\wedge}_{\HZ}$ has to be an isomorphism as well. Here we compare completion with respect to $\HZ$, and $\eta$. With good enough conditions as mentioned in the proof of \cref{PsiIntegralActual}, $\HZ$- completion, and $\eta$-completion become equivalent. That is how we get that $\Psi^{\wedge}_{\eta}$ is an isomorphism.
    \item Once we have that $\Psi^{\wedge}_{\eta}$ and $\Psi[1/\eta]$ both are equivalences, we use the following standard pullback square to get that the map $\Psi$ is an equivalence in $\SH(k)$ (\cref{onlyresult}). Here $E \in \SH(k)$ is any motivic spectrum.
    \begin{equation*} 
\begin{tikzcd}[sep=small]
	E && {E^{\wedge}_\eta} \\
	\\
	{E [1/\eta]} && {E^{\wedge}_\eta [1/\eta]}
	\arrow[from=1-1, to=1-3]
	\arrow[from=1-1, to=3-1]
	\arrow[from=3-1, to=3-3]
	\arrow[from=1-3, to=3-3]
	\arrow["\ulcorner"{anchor=center, pos=0.125, rotate=180}, draw=none, from=3-3, to=1-1]
\end{tikzcd}
 \end{equation*}
 \item To generalize this from perfect fields to general base schemes, we use the fact that the map $\Psi$ is compatible with respect to pullback along change of base. It is enough to show that $\Psi$ is an equivalence in $\SH(\Z)$.\\
 As per \cref{detecting0inSHZ}, $\Psi$ is an equivalence in $\SH(\Z)$ iff $\Psi$ is an equivalence in $\SH(\Q)$ and $\SH(\Z/p\Z)$ for all prime $p$. As we have the equivalence over any perfect field, we are done.
\end{itemize}
\cref{resultIntro} gives us the following filtration
\begin{equation}\label{skeletalIntro}
 \begin{tikzcd}[column sep=tiny, row sep =small]
	{\MSL } && {\MSL \wedge \Sigma^{-2,-1} \Sigma^{\infty} \mathds{P}^2} && {\MSL \wedge \Sigma^{-2,-1} \Sigma^{\infty} \mathds{P}^3} & {...} & \MGL \\
	&& {\Sigma^{2,1} \MSL} && {\Sigma^{4,2} \MSL}
	\arrow[hook, from=1-1, to=1-3]
	\arrow[hook, from=1-3, to=1-5]
	\arrow[hook, from=1-5, to=1-6]
	\arrow[from=1-6, to=1-7]
	\arrow[from=1-3, to=2-3]
	\arrow[from=1-5, to=2-5]
\end{tikzcd}   
\end{equation}
Here all the projective spaces are considered to be internally pointed at $\infty$. 
\begin{itemize}
    \item From the resulting spectral sequence of the filtration \cref{skeletalIntro}, over a field of characteristic other than 2, we get (\cref{piZEROmsl}) 
    \begin{equation} \label{piZEROmslIntro}
 {\pi}_0(\MSL)/\eta \cong {\pi}_0(\MGL) \cong K^{M}_*(k)   
\end{equation}
\item Applying the zeroth slice functor $s_0$ on \cref{skeletalIntro}, we get 
\begin{corollary} (\cref{s0MSL}) \label{s0MSLIntro}
 Over any Noetherian, finite-dimensional scheme, \[s_0(\MSL) \cong s_0(\MGL) \cong s_0(\mathds{1}) \] 
\end{corollary}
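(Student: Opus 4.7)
The plan is to apply the zeroth slice functor $s_0$ to the filtration \cref{skeletalIntro} supplied by \cref{resultIntro}. I would first identify the successive cofibres in that tower: from the standard cofibre sequence $\mathds{P}^{n-1}\hookrightarrow \mathds{P}^n\to \mathds{P}^n/\mathds{P}^{n-1}\simeq S^{2n,n}$, smashing with $\MSL$ and applying $\Sigma^{-2,-1}$ yields
\[ \MSL \wedge \Sigma^{-2,-1}\Sigma^\infty\mathds{P}^{n-1} \hookrightarrow \MSL\wedge \Sigma^{-2,-1}\Sigma^\infty\mathds{P}^n \to \Sigma^{2(n-1),n-1}\MSL, \]
which matches the cofibres $\Sigma^{2,1}\MSL$ and $\Sigma^{4,2}\MSL$ already displayed in \cref{skeletalIntro}.

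The key input is that $\MSL$ is effective. The Panin--Walter construction presents $\MSL$ as a sequential colimit of shifted Thom spectra $\Sigma^{-2n,-n}\Sigma^{\infty}\Th(T_n)$ of rank-$n$ tautological bundles on the special linear Grassmannians; each term lies in $\SH(S)^{\eff}$, and the effective subcategory is closed under filtered colimits. Consequently, for every $k\geq 1$ the suspension $\Sigma^{2k,k}\MSL$ belongs to $\Sigma^{2k,k}\SH(S)^{\eff}$, i.e.\ to the $k$-th stage of the slice filtration, forcing $s_m(\Sigma^{2k,k}\MSL)=0$ for all $m<k$. In particular, $s_0$ annihilates every successive cofibre in \cref{skeletalIntro}.

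Combining this vanishing with the exactness of $s_0$ on cofibre sequences shows that every transition map in the filtration induces an isomorphism on $s_0$. Since $s_0$ also commutes with the sequential colimit that produces $\MGL$ (again because $\SH(S)^{\eff}$ is closed under filtered colimits), passing to the colimit gives $s_0(\MSL)\xrightarrow{\cong} s_0(\MGL)$. The remaining isomorphism $s_0(\MGL)\cong s_0(\mathds{1})$ is by now a standard fact over any Noetherian base of finite Krull dimension: the unit map $\mathds{1}\to \MGL$ induces an equivalence on $s_0$, both zeroth slices being identified with the motivic Eilenberg--MacLane spectrum $\HZ$ (Spitzweck).

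The step I expect to be most delicate is justifying $s_0(\MGL)\cong s_0(\mathds{1})$ in the claimed generality rather than only over perfect fields. If one prefers not to quote Spitzweck's theorem as a black box, it can be replaced by a base-change argument already used in the paper: both spectra and the unit map are stable under pullback along $\Spec(S)\to\Spec(\Z)$, so by \cref{detecting0inSHZ} it suffices to verify the equivalence over $\Q$ and over $\Z/p\Z$ for every prime $p$, where the identification is completely classical.
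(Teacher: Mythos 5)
Your argument is essentially the same as the paper's: apply $s_0$ to the filtration from \cref{resultIntro}, observe the cofibres $\Sigma^{2k,k}\MSL$ are killed by $s_0$ because $\MSL$ is effective, conclude $s_0(\MSL)\cong s_0(\MGL)$ in the colimit, and then invoke the known identification $s_0(\MGL)\cong s_0(\mathds{1})$. The only difference is that you spell out the effectivity of $\MSL$ and the generality of $s_0(\MGL)\cong s_0(\mathds{1})$ more explicitly than the paper does.
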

\cref{piZEROmslIntro}, and \cref{s0MSLIntro} match with the already well-known computation regarding the unit map of $\MSL$ \cite[Example 16.35]{normsinmotivichomotopy}, or \cite[Proposition 3.6.3]{Yakerson_2021}. 
\item We also compute the first slice of $\MSL$. 
\begin{corollary} (\cref{s1MSLfinal})\label{s1MSLfinalIntro}
Over any Noetherian, finite-dimensional scheme, we have 
\[s_1(\MSL) \cong s_1(\mathds{1})\]
In particular, over a field of characteristic $0$, \[s_1(\MSL) \cong s_1(\mathds{1}) \cong \Sigma^{1,1}\HZ/2\]
\end{corollary}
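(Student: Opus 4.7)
The plan is to apply the first slice functor $s_1$ to the filtration~(\ref{skeletalIntro}), use the effectivity of $\MSL$ to collapse it to a single two-term cofiber sequence, and identify the resulting endomorphism of $\Sigma^{2,1}\HZ$ as multiplication by $2$.

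First I would apply $s_1$ to~(\ref{skeletalIntro}). The $i$-th graded piece is $\Sigma^{2i,i}\MSL$, which sits in $\SH^{\eff}(i)$ since $\MSL$ is effective, so $s_1(\Sigma^{2i,i}\MSL) = 0$ for $i \geq 2$, while for $i = 1$ it equals $\Sigma^{2,1} s_0(\MSL) \cong \Sigma^{2,1}\HZ$ by~\cref{s0MSLIntro}. The slice functor $s_1$ is triangulated and preserves filtered colimits (the effective covers being right adjoints to inclusions of compactly generated subcategories), so the inclusion $\MSL \wedge \Sigma^{-2,-1}\Sigma^{\infty}\mathds{P}^2 \hookrightarrow \MGL$ is an equivalence on $s_1$. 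Combined with the standard identification $s_1(\MGL) \cong \Sigma^{2,1}\HZ$ (Voevodsky's slice conjecture, proved by Spitzweck in this generality), the first stage of~(\ref{skeletalIntro}) produces a cofiber sequence
\[
s_1(\MSL) \longrightarrow \Sigma^{2,1}\HZ \xrightarrow{\phi} \Sigma^{2,1}\HZ,
\]
and the question reduces to computing $\phi \in \operatorname{End}(\Sigma^{2,1}\HZ)$. Over a field, this is $H^{0,0}(k;\Z) = \Z$, so $\phi = m$ for some integer.

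The key step is to show $m = 2$. I would exploit naturality of~(\ref{skeletalIntro}) with respect to the unit $\mathds{1} \to \MSL$: smashing the skeletal filtration of $\Sigma^{-2,-1}\Sigma^{\infty}\mathds{P}^{\infty}$ with the unit yields a map of cofiber sequences whose right vertical is the identity on $\Sigma^{2,1}\HZ$ and whose left vertical is $s_1$ of the unit. Over a characteristic zero field, Röndigs--Spitzweck--Østvær identify $s_1(\mathds{1}) \cong \Sigma^{1,1}\HZ/2$ and the induced map $s_1(\mathds{1}) \to s_1(\MGL)$ as the nontrivial Bockstein $\beta \in [\Sigma^{1,1}\HZ/2, \Sigma^{2,1}\HZ] \cong \Z/2$; combining with our cofiber sequence forces $\phi \circ \beta = 0$, so $m$ is even. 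Sharpening to $m = 2$ can be done either by tracking the lift of $\beta$ through $s_1(\MSL)$ and its extension data, or by cross-checking against the independent computation of $\pi_{1,*}(\MSL)$ given later in this paper, which via the slice spectral sequence fixes $\pi_{1,1}(s_1(\MSL)) \cong \Z/m$.

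Once $\phi = 2$ is established, the fiber $\operatorname{fib}(2 \colon \Sigma^{2,1}\HZ \to \Sigma^{2,1}\HZ) \cong \Sigma^{1,1}\HZ/2$ coincides with $s_1(\mathds{1})$ over a characteristic zero field, giving the explicit identification. For a general Noetherian finite-dimensional base $S$, the filtration~(\ref{skeletalIntro}),~\cref{s0MSLIntro}, and the Spitzweck computation of $s_1(\MGL)$ all hold over $S$; the endomorphism $\phi$ over $S$ is the pullback of its universal counterpart over $\operatorname{Spec}\Z$, hence still equal to $2$ (by reducing to the field case via~\cref{detecting0inSHZ}), and the natural map $s_1(\mathds{1}) \to s_1(\MSL)$ induced by the unit is therefore an equivalence over $S$. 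The main obstacle in this plan is the sharpening from ``$m$ even'' to ``$m = 2$'' in Step~2; the other steps are essentially formal once the filtration~(\ref{skeletalIntro}) is in place.
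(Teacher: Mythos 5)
Your overall framing — apply $s_1$ to the skeletal filtration, collapse to the first stage, identify $s_1(\MSL)$ as the fiber of a map $\phi \colon \Sigma^{2,1}\HZ \to \Sigma^{2,1}\HZ$ — is sound, and the identifications $s_1(\Sigma^{2,1}\MSL) \cong \Sigma^{2,1}s_0(\MSL) \cong \Sigma^{2,1}\HZ$ and $s_1(\MSL \wedge \Sigma^{-2,-1}\Sigma^\infty\mathds{P}^2) \cong s_1(\MGL) \cong \Sigma^{2,1}\HZ$ are exactly the ingredients the paper uses. But the pivotal step, pinning down $\phi$ as multiplication by $2$, is left incomplete and you say so yourself. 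The Bockstein argument only gives $\phi \circ \beta = 0$, hence $m$ even; that does not exclude $m \in \{0, 4, 6, \dots\}$, and for $m \neq \pm 2$ the fiber of $m$ on $\Sigma^{2,1}\HZ$ is not $\Sigma^{1,1}\HZ/2$ (for $m = 0$ it splits, for $|m| \geq 4$ you get the wrong torsion). Neither of your two proposed patches resolves this: tracking ``extension data'' is vague, and cross-checking against $\pi_1(\MSL)$ is risky — the paper's computation of $\pi_1(\MSL)$ in Theorem~\ref{pi1msloliver} goes through $s_0(\MSL)$ and the filtration itself, so you would need to verify carefully that no circularity is introduced.

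The paper sidesteps the need to compute $\phi$ at all. Instead of identifying the boundary map, it compares the cofiber sequence for $\mathds{1}$ (obtained by applying $s_1$ to $\Sigma^{1,1}\mathds{1} \xrightarrow{\eta} \mathds{1} \to \mathds{1}/\eta$) with the one for $\MSL$ (obtained by smashing the same triangle with $\MSL$), using the lax monoidality of $s_*$ and the unit map $\mathds{1} \to \MSL$. The horizontal maps at the first, third, and fourth rows of the resulting ladder are isomorphisms — at the first and fourth because $s_0(\mathds{1}) \xrightarrow{\cong} s_0(\MSL)$, and at the third because $s_1(\mathds{1}/\eta) \cong s_1(\MGL) \cong s_1(\MSL \wedge \mathds{1}/\eta)$ — and the five-lemma forces the second row $s_1(\mathds{1}) \to s_1(\MSL)$ to be an isomorphism. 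Note that you actually gesture at exactly this in passing (``naturality of the skeletal filtration with respect to the unit'') but then abandon it in favor of the $\phi = 2$ computation. Had you followed the naturality route to its conclusion, you would not have needed to compute $\phi$ at all, and you would also avoid the final paragraph's shaky appeal to \cref{detecting0inSHZ} to transfer the value of an endomorphism across base schemes — that lemma detects contractibility, not the value of a scalar.
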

\item Based on an idea of Oliver R\"ondigs, we also compute the first line of the stable homotopy group of $\MSL$ over a field of characteristic $0$. Let $\textbf{kq}$ denote the very effective cover of the Hermitian $K$-theory spectrum $\text{KQ}$.
\begin{theorem} (\cref{pi1msloliver}) \label{pi1msloliverintro}
Over a field of characteristic $0$, we have 
\[\pi_1(\MSL) \cong \pi_1(\textbf{kq})\]
    
\end{theorem}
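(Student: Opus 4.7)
The plan is to combine the filtration \cref{skeletalIntro} of $\MGL$ by suspensions of $\MSL$ with the slice spectral sequence, and to compare against the R\"ondigs--Spitzweck--{\O}stv{\ae}r computation of $\pi_1(\textbf{kq})$.

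The starting input is \cref{s0MSLIntro} and \cref{s1MSLfinalIntro}: over a characteristic zero field, $s_0(\MSL) \cong \HZ$ and $s_1(\MSL) \cong \Sigma^{1,1}\HZ/2$, which coincide with $s_0(\textbf{kq})$ and $s_1(\textbf{kq})$. I would then fix a natural comparison map $\MSL \to \textbf{kq}$ (coming, for instance, from the $\SL$-orientation of hermitian $K$-theory after an appropriate twist, or indirectly via a slice coconnective truncation to $s_{\leq 1}\mathds{1}$, which both spectra receive a map to). Naturality of the slice spectral sequence along this map forces the $d_1$- and early differentials in the range relevant for $\pi_1$ to agree on both sides. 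The filtration \cref{skeletalIntro} plays a double role: first, it lets us bound or compute the higher slices of $\MSL$ that can contribute to $\pi_1$ in terms of the well-understood slices of $\MGL$ (Hoyois, Spitzweck); second, it yields an independent spectral sequence converging to $\pi_*(\MGL)$ from $\pi_*(\MSL)$ that can be used as a consistency check via the already-known value of $\pi_1(\MGL)$.

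The main obstacle is resolving the higher differentials and extension problems that contribute to $\pi_1$. R\"ondigs--Spitzweck--{\O}stv{\ae}r handled the analogous problem for the motivic sphere and for $\textbf{kq}$ via a detailed study of the action of $\eta$ and motivic Adams operations, and any direct approach on the $\MSL$-side has to reproduce that analysis. The hope is that the shared low slices of $\MSL$ with the sphere spectrum, combined with the rigid $\MGL$-control provided by \cref{skeletalIntro}, allow their techniques to be transported to the $\MSL$-side with only cosmetic changes, delivering the identification $\pi_1(\MSL) \cong \pi_1(\textbf{kq})$ claimed in the statement.
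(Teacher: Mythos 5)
Your proposal identifies the right landscape — a comparison map $\MSL \to \textbf{kq}$ built from orientation theory, plus the slice filtration and the low slices computed in \cref{s0MSLIntro} and \cref{s1MSLfinalIntro} — but the "transport with only cosmetic changes" step is exactly where a genuine argument is needed, and you don't supply one. Matching $s_0$ and $s_1$ is nowhere near enough: the slice spectral sequence still has differentials and extensions, and nothing you have said rules out $\pi_1(\MSL)$ picking up an extra class relative to $\pi_1(\textbf{kq})$. In particular, your hope that naturality of the slice spectral sequence forces the relevant differentials to agree is not a proof; naturality gives compatibility, not agreement.

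The paper's actual argument is a squeeze, resting on two inputs that your proposal does not mention. First, the cofiber of the unit $\mathds{1} \to \MSL$ lies in $\Sigma^{4,2}\SH(k)^{\text{veff}}$ (\cref{MSLnorm}), which together with $s_0(\mathds{1})\cong s_0(\MSL)$ gives a surjection $\pi_1(f_1\mathds{1}) \twoheadrightarrow \pi_1(f_1\MSL)$; combined with the factorization $\mathds{1}\to\MSL\to\textbf{kq}$ of the unit of $\textbf{kq}$ (\cref{factorizationOfUnitOfkq}) and R\"ondigs--Spitzweck--{\O}stv{\ae}r's surjection $\pi_1(\mathds{1})\twoheadrightarrow\pi_1(\textbf{kq})$, this yields a surjection $\pi_1(f_1\MSL)\twoheadrightarrow\pi_1(f_1\textbf{kq})$. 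Second, and this is the essential point you miss, Ananyevskiy's theorem that $\nu$ dies in any $\text{Sp}$-oriented ring spectrum (\cref{newSpvanish}) shows $\nu\mapsto 0$ in $\pi_{3,2}(\MSL)$. Since the presentations of $\pi_1(f_1\mathds{1})$ and $\pi_1(f_1\textbf{kq})$ in \cite{roendigs2019remarks} differ precisely by the generator $\nu$, this forces $\pi_1(f_1\MSL)$ to be a quotient of $\pi_1(f_1\textbf{kq})$, and the squeeze closes. Without the vanishing of $\nu$ there is no mechanism to pass from "quotient of $\pi_1(f_1\mathds{1})$" to "quotient of $\pi_1(f_1\textbf{kq})$," and the proposal stalls at exactly the "obstacle" you flag. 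You should also note that your alternative construction of the comparison map via $s_{\leq 1}\mathds{1}$ does not work: both spectra map \emph{to} that truncation, which does not produce a map $\MSL\to\textbf{kq}$; the orientation-theoretic construction (your first suggestion, and the paper's) is the one that goes through.
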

\end{itemize}
\textbf{Outline of the paper}
For the sake of being self-contained, and notationally coherent, we have recalled well-known constructions and results in \cref{Notation and preliminaries} with due credits, which are used throughout \cref{An Interpolation between special linear and general algebraic cobordism Section}.\\[1.0000ex]
Some of the results, for example, \cref{cellularity MSL} regarding cellularity of $\MSL$ seem to be well-known, but we could not find a detailed written proof. We have spelled these out.\\[1.0ex]
We state the main question of the paper \cref{main_question} and prove \cref{resultIntro} in \cref{An Interpolation between special linear and general algebraic cobordism Section}. Some of its applications are shown in \cref{Some applications}.\\[1.000ex]
\textbf{Acknowledgement}\\[1.000ex]
I am thankful to my doctoral supervisor Oliver R\"ondigs for suggesting me the problem, teaching me most of the materials from scratch, sharing his ideas throughout this project, answering numerous questions, and lot of comments in several drafts of this article. I am extremely grateful to Matthias Wendt for going through an earlier version of this work, and correcting a lot of mistakes. I am thankful to Alexey Ananyevskiy for pointing out a mistake in \cref{first_computation}. This is part of the author's doctoral thesis. The doctoral position was funded by DFG-Sachbeihilfe \say{Algebraic bordism spectra: Computations, filtrations, applications}. The author was supported by the Nederlandse Organisatie voor Wetenschappelijk Onderzoek (Dutch Research Council) Vidi grant no VI.Vidi.203.004 during final stages of her work.

\section{Notations and Preliminaries} \label{Notation and preliminaries}
\subsection{Notations}
Let $S$ be a Noetherian scheme of finite Krull dimension. We are working in the stable motivic homotopy category of $S$, denoted as $\SH(S)$. We denote the category of smooth schemes of finite type over $S$ by $\SmS_S$. We start with the category of space (or Kan complexes) valued presheaves over $\SmS_S$, denoted as $\Preshv(\SmS_S)$. We enforce Nisnevich descent, and make the affine line invertible to get the unstable motivic homotopy category $\mathcal{H}(S)$. Passing to its pointed version, and making the operation $\wedge ({\mathds{P}^1})$ invertible, we get the stable motivic homotopy category over $S$.
\[\SH(S):= \mathcal{H}(S)_* [({\mathds{P}^1})^{-1}] \cong  \Bigl(L_{\text{Nis}}(\Preshv(\SmS_S)) \cap L_{{\A}^1}(\Preshv(\SmS_S))\Bigr)_*[({\mathds{P}^1})^{-1}]\]
In this paper, we use the following notation, unless specified otherwise.
\begin{center}
\begin{tabular}{ |c|c| } 
\hline
 $\mathds{1}$ & tensor unit of $\SH(S)$ or the motivic sphere spectrum\\
 $S^{1,0}$, $S^{1,1}$ & the simplicial circle, the Tate circle $\mathds{G}_m$\\
 $T:= \mathds{A}^1/(\mathds{A}^1-0)$ & $S^{1,0} \wedge S^{1,1} \cong \mathds{P}^1$\\
 $\SH^{\text{eff}}(S)$, $\SH^{\text{veff}}(S)$ & the effective, and the very effective subcategories of $\SH(S)$\\
 
 \hline
 \end{tabular}
\end{center}
For some spectrum $E \in \SH(S)$, and $X \in \SmS_S$, we denote 
 \begin{center}
\begin{tabular}{ |c|c| }
\hline
$\underline{\pi}_{p,q}(E)(-), \pi_{p,q}(E)$ & bigraded stable homotopy sheaves, and groups of $E$\\
$E^{p,q}(X)$ & $\Hom_{\SH(S)}(\Sigma^{\infty} X_{+}, \Sigma^{p,q} E)$\\
$E_{p,q}(X)$ & $\pi_{p,q} (\Sigma^{\infty} X_{+} \wedge E)$ \\
$f_q E$, $\tilde{f}_q E$ & $q$-th effective, and $q$-th very effective cover of $E$\\
$s_q E$, $\tilde{s}_q E$ & $q$-th effective, and $q$-th very effective slice of $E$\\

\hline

\end{tabular}
\end{center}

\begin{center}
\begin{tabular}{ |c|c| } 
\hline
 $\HZ$, $\text{H}\Z/p$, $\text{H}\Q$ & integral, mod-$p$, and rational motivic cohomology\\
 $\MGL$, $\MSL$, $\text{MSp}$ & general linear, special linear, and symplectic algebraic cobordism\\
 $\text{KQ}$, $\textbf{kq}$ & Hermitian $K$-theory, very effective cover of Hermitian $K$-theory\\
 $\underline{K}^{MW}$, $\underline{K}^{M}$ & Milnor-Witt $K$-theory, and Milnor $K$-theory sheaves\\
 \hline

\end{tabular}
\end{center}
We describe our main players algebraic cobordism $\MGL$, and special linear algebraic cobordism $\MSL$ as symmetric $\T$-spectra, and symmetric $\T^2$-spectra respectively. For more on (motivic) symmetric spectra, we would like to refer to \cite{Jardine2000}.

\subsection{General and special linear algebraic cobordism}
\subsubsection{Algebraic cobordism $\MGL$}\label{constructMGL}
In the late 90's, Voevodsky constructed the algebraic cobordism spectrum $\MGL$ in $\SH(S)$ \cite[Subsection 6.3]{voevodsky_icm}. If $S = Spec(k)$, where $k$ is a field with a complex embedding, complex realization of $\MGL$ gives complex cobordism spectrum $\MU$ \cite[p.166]{summermotivic}.\\[1.000000000ex]
Let $V$ be a vector bundle over a smooth scheme $X$ over $S$. The Thom space of $V$, $\text{Th}(V)$ is defined as the Nisnevich sheafification of the presheaf $(V/ V - z(X))$. Here $z:X \to V$ is the zero section. Sometimes it is also denoted by $\text{Th}_X(V)$. \\[1.0ex]
Let's denote the grassmannian of $n$-planes $\Gr(n, (\mathcal{O}_{S}^{n})^p)$ as $\Gr(n,np)$. Let $\mathcal{T}_{n,np}$ be the tautological $n$-bundle on it.\\
The closed immersion $\Gr(n,np) \to \Gr(n,n(p+1))$ is covered by the vector bundle map $\mathcal{T}_{n,np} \to \mathcal{T}_{n,n(p+1)}$, resulting into maps between the
Thom spaces $\text{Th}(\mathcal{T}_{n,np}) \to \text{Th}(\mathcal{T}_{n,n(p+1)})$.
We denote
\begin{align*}
     \mathcal{T}_n & := \ncolim_{p} \mathcal{T}_{n,np}\\
     \BGL_n & := \ncolim_{p} \Gr_{n,np}\\
     \MGL_n & := \ncolim_{p} \text{Th}(\mathcal{T}_{n,np})
\end{align*}
The structure map of $\MGL$ is induced by the isomorphism
\[\mathcal{T}_{n+1}\restriction_{\BGL_n} \cong \mathcal{T}_{n} \oplus \mathcal{O}_{\BGL_n}\]
This gives the map
\[T \wedge \text{Th}_{\BGL_n}(\mathcal{T}_n) \cong \text{Th}_{\BGL_n}(\mathcal{O}_{\BGL_n}) \wedge \text{Th}_{\BGL_n}(\mathcal{T}_n) \cong \text{Th}_{\BGL_n}(\mathcal{O} \oplus \mathcal{T}_{n}) \to \text{Th}_{\BGL_{n+1}}(\mathcal{T}_{n+1}) \]
\subsubsection*{Symmetric $T$-spectrum structure}
In \cite{vezzosi_BP}, \cite{PPR}, $\MGL$ is given ring structure. We now recall the \textbf{symmetric $\T$-spectrum} structure of $\MGL$ Following \cite[section 4]{PW}.\\[1.000ex]
The inclusion $\Gr(n,np) \to \Gr(n,n(p+1))$ is compatible with the diagonal action of $\GL_n := \GL(\mathcal{O}_S^n)$ on the grassmannians $\Gr(n,np)$. The bundles $\mathcal{T}_{n,np}$ are also $\GL_n$ equivariant.
 The inclusion $\Gr(n,np) \to \Gr(n,n(p+1))$ induces a $\GL_n$-equivariant inclusion of the Thom spaces $\text{Th}(\mathcal{T}_{n,np}) \to \text{Th}(\mathcal{T}_{n,n(p+1)})$. This gives an $\GL_n$-action on the space $\MGL_n$. This induces an action of $\Sigma_n < \GL_n$ on $\MGL_n$.\\[1.0000ex]
We have a closed embedding 
\[\Gr(n,np) \times \Gr(m,mp) \to \Gr(n+m,(n+m)p)\]
This is covered by the map of tautological vector bundles, inducing the following map on Thom spaces  
\begin{equation}\label{ringStrMGL1}
 \text{Th}(\mathcal{T}_{n,np}) \wedge  \text{Th}(\mathcal{T}_{m,mp}) \to \text{Th}(\mathcal{T}_{n+m,(n+m)p})
\end{equation}
In the colimit it induces the following map
\begin{equation}\label{ringStrMGL2}
 \mu_{nm} : \MGL_n \wedge \MGL_m \to \MGL_{n+m}
\end{equation}
As the map \cref{ringStrMGL1} is $\GL_n \times \GL_m$-equivariant, so is the map \cref{ringStrMGL2}.\\[1.000ex]
The inclusion $(\text{id},0,0,..0):\mathcal{O}_S^n \to (\mathcal{O}_S^n)^{\oplus p}$ induces an inclusion $x_n: \Gr(n,n) \to \Gr(n,np)$. Passing to the colimit, this makes $\BGL_n$ a pointed space. This $x_n$ in $\BGL_n$ is an $S$-valued point, which is fixed under the $\GL_n$-action. The fiber of $\mathcal{T}_{n}$ over this point is $\mathcal{O}_S^n$. This gives us the required unit map 
\[e_n: \T^n \xrightarrow{\cong} \text{Th}(\mathcal{O}_S^n) \to \MGL_n\]
\subsubsection{Special linear algebraic cobordism $\MSL$}\label{MSL}
Panin and Walter constructed the special linear algebraic cobordism spectrum $\MSL$ \cite{PW}. We recall the construction following \cite[section 4]{PW}.\\[1.0000ex]
Let $n >0$. Consider the determinant line bundle $det(\mathcal{T}_{n,np})$ on $\Gr(n,np)$. The \textbf{oriented grassmannian} $\SGr_{n,np} \in \SmS_S$ is defined as the compliment of the zero section of this determinant bundle. Let $z:\Gr_{n,np} \to  det(\mathcal{T}_{n,np})$ be the zero section.
\[\SGr_{n,np}:= det(\mathcal{T}_{n,np}) \backslash z (\Gr_{n,np})\]
We have the following pullback diagram,\\
\begin{equation} \label{MSLconstruct}
\begin{tikzcd}
	{\pi^*(\mathcal{T}_{n,np})} & {\mathcal{T}_{n,np}} \\
	{\SGr_{n,np}} & {\Gr_{n,np}}
	\arrow["\pi"', from=2-1, to=2-2]
	\arrow["{\cong_{\mathds{A}^1}}", from=1-2, to=2-2]
	\arrow[from=1-1, to=1-2]
	\arrow[from=1-1, to=2-1]
\end{tikzcd}
\end{equation}
We denote $\pi^*(\mathcal{T}_{n,np})$ by $\widetilde{\mathcal{T}_{n,np}}$.
We define
\begin{align*}
     \BSL_n & := \ncolim_{p} \SGr_{n,np}\\
     \MSL_n & := \ncolim_{p} \text{Th}(\widetilde{\mathcal{T}_{n,np}})
\end{align*}
In the following commutative diagram \cref{MSLstrMap}, the dotted arrow is induced by the map $\mathcal{T}_{n}\oplus \mathcal{O} \to \mathcal{T}_{n+1}$, covering the map $\BGL_{n} \xrightarrow{i_n} \BGL_{n+1}$. Here we need to use the fact that $det(\mathcal{T}_{n} \oplus \mathcal{O}) \cong det(\mathcal{T}_{n})$. 
\begin{equation}\label{MSLstrMap}
\begin{tikzcd}[sep=scriptsize]
	{\widetilde{\mathcal{T}_{n}} \oplus \mathcal{O}} && {\mathcal{T}_{n}\oplus \mathcal{O}} & {} & {\mathcal{T}_{n+1}} \\
	&&& {\widetilde{\mathcal{T}_{n+1}}} & {} \\
	{det(\mathcal{T}_{n} \oplus \mathcal{O})\backslash z(\BGL_{n})} && {\BGL_{n}} && {\BGL_{n+1}} \\
	{\BSL_n:=det(\mathcal{T}_{n})\backslash z(\BGL_{n})} &&& {det(\mathcal{T}_{n+1})\backslash z(\BGL_{n+1}) =: \BSL_{n+1}} \\
	{}
	\arrow[from=1-1, to=1-3]
	\arrow[from=1-1, to=3-1]
	\arrow[from=1-3, to=1-5]
	\arrow[from=1-3, to=3-3]
	\arrow[from=1-5, to=3-5]
	\arrow[from=2-4, to=1-5]
	\arrow[from=2-4, to=4-4]
	\arrow[from=3-1, to=3-3]
	\arrow["\cong", from=3-1, to=4-1]
	\arrow["{i_n}"{pos=0.6}, hook, from=3-3, to=3-5]
	\arrow["{\pi_n}"'{pos=0.8}, from=4-1, to=3-3]
	\arrow["\widetilde{i_n}",dashed, from=4-1, to=4-4]
	\arrow["{\pi_{n+1}}"'{pos=0.7}, from=4-4, to=3-5]
\end{tikzcd}
\end{equation}
As $(\pi_{n+1} \circ \widetilde{i_n})^* \mathcal{T}_{n+1} \cong (i_n \circ \pi_n)^* \mathcal{T}_{n+1}$, we get that $\widetilde{\mathcal{T}_{n+1}}\restriction_{\BSL_n} \cong \widetilde{\mathcal{T}_{n}} \oplus \mathcal{O}$. This induces the structure map of $\MSL$.
\subsubsection*{Symmetric $T^2$-spectrum structure}
The map $\SGr_{n,np} \to \Gr_{n,np}$ is a principal $\Gm$-bundle. We have the following pullback diagram. 
\[\begin{tikzcd}[sep=scriptsize]
	{det(\widetilde{\mathcal{T}_{n,np}}) \cong \pi^*(det(\mathcal{T}_{n,np}))} && {det(\mathcal{T}_{n,np})} \\
	{\SGr_{n,np}} && {\Gr_{n,np}}
	\arrow[from=1-1, to=1-3]
	\arrow[from=1-1, to=2-1]
	\arrow[from=1-3, to=2-3]
	\arrow["\pi"', from=2-1, to=2-3]
\end{tikzcd}\]
The inclusion $\SGr_{n,np} \to det(\mathcal{T}_{n,np})$ gives a nowhere vanishing section of $det(\widetilde{\mathcal{T}_{n,np}})$. The corresponding trivialization is denoted by $\lambda_{p,np}: \mathcal{O}_{\SGr_{n,np}} \xrightarrow{\cong}  det(\mathcal{T}_{n,np})$.\\[1.0000ex]
There is a bijection between smooth scheme maps $X \to \SGr_{n,np}$ and pair of maps $(f,\tau)$, where $f: X \to \Gr_{n,np}$ and $\tau: \mathcal{O}_X \xrightarrow{\cong} f^* det(\mathcal{T}_{n,np})$. Using this we get the following maps of representable presheaves\\
\begin{equation*}
\begin{split}
    \Hom_{Sm_S} (X, \SGr_{n,np}) \times \Hom_{Sm_S} (X, \SGr_{m,mp}) \to \Hom_{Sm_S} (X, \SGr_{(m+n),(m+n)p})\\
    (f_1, \tau_1) \times (f_2, \tau_2) \to (f_1 \oplus f_2, \tau_1 \oplus \tau_2)
    \end{split}
\end{equation*}
In the colimit it induces the required multiplication map:
\begin{equation*}
\begin{split}
    \BSL_n \times \BSL_m \to \BSL_{n+m}\\
    \MSL_n \wedge \MSL_m \to \MSL_{n+m}
    \end{split}
\end{equation*}
As the bundle $\mathcal{T}_{n,np}$ on $\Gr_{n,np}$ is $\GL_n$ equivariant, so is the determinant bundle $det(\mathcal{T}_{n,np})$. This also induces $\GL_n$-action on the complement of its zero section $\SGr_{n,np}$. Taking the colimit, it gives the required action on $\MSL_n$.\\[1.000ex]
There is a subtle problem in constructing the unit map. To construct the unit maps $e_n: T^n \to \MGL_n$, we use the $S$-valued fixed point $\Gr(n,n) \to \BGL_n$. In order to construct similar unit maps of $\MSL$, we would need $\Sigma_n$-fixed points on $\BSL_n$, preferably lying over $\Gr(n,n)$. As $\SGr_{n,np} \to \Gr_{n,np}$ is a principal $\mathbb{G}_m$-bundle, the $\mathbb{G}_m$ fiber over $\Gr(n,n)$ is not fixed by $\Sigma_n \subseteq \GL_n$. But it is fixed by the alternating group $\mathfrak{U}_n \subset \SL_n$. Therefore the embedding $\Sigma_n \to \SL_{2n}$ is used.\\ It gives an action $\Sigma_n \times \BSL_{2n} \to \BSL_{2n}$, preserving the pointwise fiber over the distinguished base point lying over $\Gr(2n,2n) \to \BGL_{2n}$. It is more convenient to write $\MSL$ as a symmetric $T^2$-spectra. Each space of the commutative $\T^2$-monoid $\MSL$ is $\MSL_{2n}$. We skip the subtle technicality of this construction, and refer to \cite[Section 4]{PW}.
\begin{remark}\label{complexrealization}
Let's denote the algebraic subgroup of rank $n$ matrices  of $M_{n \times N}$, by $U(n,N)$. From \cite[Remark 4.2.5]{MariiaYakersonThesis}, we can describe the Grassmannian $\SGr_{n,N}$ as the quotient sheaf $\SL_n\backslash U(n,N)$.\\[1.000ex]
As the complex realization functor is a left adjoint \cite[22.3.1]{HandbookUnstable}, it commutes with colimits. If we start from $\SH(\C)$, taking complex realization of $\BSL_n$ gives $\text{BSU}_n$. Comparing with the construction of $\text{MSU}$ \cite[p.13]{Limonchenko_2019}, complex realization of $\MSL$ gives the spectrum $\MSU$.

\end{remark}
\begin{remark}\label{Remark3.1}
  By the geometric construction, there is a canonical map $\Phi : \MSL \to \MGL$ induced by the map $\pi^*(\Taut\GL_{n,np}) \to \Taut\GL_{n,np}$ in diagram \ref{MSLconstruct}. If $\MGL$ is considered as a symmetric $T^2$-spectrum, $\Phi$ induces a symmetric $T^2$-ring spectrum map.
\end{remark}
\subsection{Stable Cellularity}
The classical stable homotopy category $\SH$ is a compactly generated triangulated category with a single compact generator. The topological sphere spectrum $\Sp$ is a compact generator of this category.\\[1.000ex]
The motivic stable homotopy category $\SH(S)$ does not have single compact generator. From \cite[Proposition 5.3.3]{robaloThesis}, $\{\Sigma^{\infty} X_{+} | X \in \SmS_S\}$ is a set of compact generators for $\SH(S)$. We can consider the subcategory generated by the motivic sphere spectrum $\Sigma^{\infty} S_+$. This is called the subcategory of \textbf{cellular spectra}.
\begin{definition}\label{DefinitionStableCellularity}\cite[Definition 2.10]{cellulardi} \cite[Remark 4.1]{motlandweber}
   The subcategory of \textbf{cellular objects} is the smallest localizing triangulated subcategory of $\SH(S)$ containing all the motivic spheres $\{\Sigma^{\infty}_{\mathds{P}^1}S^{p,q}| p,q \in \Z\}$. This subcategory is closed under homotopy colimits.
\end{definition}
We call an object $X \in \mathcal{H}(S)_*$ to be (stably) cellular if $\Sigma^{\infty}_{\mathds{P}^1} X_+$ is a cellular object in $\SH(S)$.\\
As in this article, we are not going to talk about unstable cellular objects, by cellularity we will always mean stable cellularity. For unstable cellularity, or in general, the notion of cellularity in a pointed model category, see \cite{cellulardi}.
\begin{example}
\cite[Lemma 4.1]{voevodsky_icm}    As $\mathds{A}^n \backslash 0 \cong_{\mathds{A}^1} S^{2n-1,n}$, $\mathds{A}^n \backslash 0$ is cellular.
\end{example}
\begin{lemma} \label{lemma 2 out of 3} \cite[Lemma 3.11]{cellulardi} 
 Let $A \to B \to C $ be a cofiber sequence in $\SH(S)$. If any two of them are cellular, so is the third. \end{lemma}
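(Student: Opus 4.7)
The plan is to reduce the statement to the defining closure properties of the cellular subcategory. Recall from \cref{DefinitionStableCellularity} that the cellular subcategory is, by definition, a localizing triangulated subcategory of $\SH(S)$. In particular, it is closed under (de)suspension and under taking cofibers of maps between its objects.

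Given a cofiber sequence $A \to B \to C$ in $\SH(S)$, we may rotate the underlying distinguished triangle $A \to B \to C \to \Sigma A$ to obtain the further distinguished triangles $B \to C \to \Sigma A$ and $\Sigma^{-1} C \to A \to B$. Thus in each of the three cases, the object whose cellularity we wish to conclude appears, up to a possible shift, as the cofiber of a map between the other two.

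Concretely, I would argue case by case. If $A$ and $B$ are cellular, then $C$ is the cofiber of $A \to B$ and hence cellular. If $B$ and $C$ are cellular, then $\Sigma A$ is the cofiber of $B \to C$, so $\Sigma A$ is cellular and therefore so is $A = \Sigma^{-1}(\Sigma A)$, using closure under desuspension. Finally, if $A$ and $C$ are cellular, then $\Sigma^{-1} C$ is cellular, and $B$ is the cofiber of $\Sigma^{-1} C \to A$, hence cellular. This exhausts all three cases.

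There is no real obstacle here: the lemma is essentially a formal consequence of the cellular subcategory being triangulated, together with the rotation axiom in $\SH(S)$. The only point to be careful about is that the cellular subcategory is closed under \emph{both} suspension and desuspension, which is immediate from it being a localizing triangulated subcategory.
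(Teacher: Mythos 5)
The paper states this lemma as a citation to Dugger--Isaksen's paper and does not reprove it, so there is no in-paper proof to compare against. Your argument is the standard (and essentially the only) proof: it follows formally from \cref{DefinitionStableCellularity}, since a localizing triangulated subcategory is, by definition of triangulated subcategory, closed under shifts in both directions and contains the cofiber of any map between two of its objects, and the rotation axiom in $\SH(S)$ lets you realize whichever vertex is in question as a (shifted) cofiber of a map between the other two. This matches what one would find in the cited reference, and your case analysis is complete and correct.

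One small remark: you could state the argument slightly more economically by noting that the 2-out-of-3 property for distinguished triangles is often taken as part of the \emph{definition} of a triangulated subcategory, so the only content of the lemma is the translation between ``cofiber sequence'' and ``distinguished triangle'' in $\SH(S)$ (which is automatic in a stable setting) and the observation that the cellular subcategory is triangulated. Your spelled-out version is fine and arguably clearer for a reader less familiar with the conventions.
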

\begin{example} 
 $\mathds{P}^1 \cong S^{2,1}$. An induction argument using the cofiber sequence $\mathds{A}^n - 0 \to \mathds{P}^{n-1} \to \mathds{P}^{n}$ and \cref{lemma 2 out of 3} shows $\mathds{P}^{n}$ is cellular.
\end{example}
\begin{example}\cite[Lemma 3.11]{cellulardi}
    let's consider a smooth scheme $X$, and a closed point $x \in X$. We can show that $X$ is cellular iff $X \backslash x$ is.\\[1.01ex]
    Let's denote the normal bundle of $x \hookrightarrow X$ as $\mathcal{N}_x$. We have the following cofiber sequence using homotopy purity \cite[p.~115, Theorem 2.23]{MV99}:
    \[X \backslash x \hookrightarrow X \to \text{Th}(\mathcal{N}_x)\]
 We know that $\mathcal{N}_x \cong \A^n/(\A^n \backslash 0) \cong S^{2n,n}$, where $n$ is the dimension of $X$. Using \cref{lemma 2 out of 3}, we are done.  
\end{example}
\begin{remark}
    For any closed subscheme $Z \hookrightarrow X$, we have the cofiber sequence \[Z \hookrightarrow X \to \text{Th}(\mathcal{N}_Z)\]
    As mentioned in \cite[Remark 3.12]{cellulardi}, due to a  lack of description of $\text{Th}(\mathcal{N}_Z)$, we can not say that $X$ is cellular iff $X \backslash Z$ is. 
\end{remark}
\begin{example}
    The Grassmannians $\GL_n$, and algebraic cobordism spectrum $\MGL$ are cellular \cite[Proposition 4.4, Theorem 6.4]{cellulardi}. Cellularity of Grassmannians can be proved in a beautiful way using the description of the top dimensional Schubert cells \cite{wendt2012examples}.
\end{example}
Cellularity of $\MSL$ seems to be well-known. As we could not find a detailed proof in the literature, we proceed to write it down.
\begin{lemma} \label{cellularity MSL}
$\MSL$ is cellular.
\end{lemma}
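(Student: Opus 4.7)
The plan is to reduce cellularity of $\MSL$ to cellularity of each Thom space $\text{Th}(\widetilde{\mathcal{T}_{n,np}})$ appearing in its construction. As a symmetric $T^2$-spectrum, $\MSL$ is the homotopy colimit over $n$ of bigraded shifts of $\MSL_{2n}$, with $\MSL_{2n} = \ncolim_p \text{Th}(\widetilde{\mathcal{T}_{2n,2np}})$. Since cellular spectra form a localizing subcategory of $\SH(S)$ (closed under bigraded shifts, cofibers, and homotopy colimits), it is enough to prove that each $\text{Th}(\widetilde{\mathcal{T}_{n,np}})$ is cellular.

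First I would establish cellularity of the oriented Grassmannian $\SGr_{n,np}$ itself. By construction it is the complement of the zero section in the determinant line bundle $\det(\mathcal{T}_{n,np}) \to \Gr_{n,np}$, so homotopy purity produces the cofiber sequence
\[
\SGr_{n,np} \longrightarrow \det(\mathcal{T}_{n,np}) \longrightarrow \text{Th}_{\Gr_{n,np}}(\det(\mathcal{T}_{n,np})).
\]
The middle term is $\A^1$-equivalent to the cellular scheme $\Gr_{n,np}$, and the right-hand Thom space of a line bundle over a cellular base is itself cellular via the presentation $\text{Th}(L) \simeq \mathds{P}(L \oplus \mathcal{O})/\mathds{P}(\mathcal{O})$, together with cellularity of $\mathds{P}^1$-bundles over cellular schemes. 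Then \cref{lemma 2 out of 3} forces $\SGr_{n,np}$ to be cellular.

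Second, I would apply the same type of projective bundle presentation to the rank-$n$ bundle $\widetilde{\mathcal{T}_{n,np}}$ over the now-cellular base $\SGr_{n,np}$:
\[
\text{Th}(\widetilde{\mathcal{T}_{n,np}}) \simeq \mathds{P}(\widetilde{\mathcal{T}_{n,np}} \oplus \mathcal{O})/\mathds{P}(\widetilde{\mathcal{T}_{n,np}}).
\]
Both projectivizations are cellular as projective bundles over a cellular base, so \cref{lemma 2 out of 3} yields cellularity of the Thom space and hence of $\MSL$. The main technical obstacle is the auxiliary claim that $\mathds{P}(V)$ of any vector bundle on a cellular base is cellular: for a trivial bundle this is immediate, and for a general $V$ I would argue by induction on rank via the cofiber sequence obtained by removing a hyperplane subbundle, or by Nisnevich-local trivialization and patching---essentially the ingredient underlying Dugger--Isaksen's proof of cellularity of $\MGL$. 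Once this input is available, the remainder is a chain of two-out-of-three applications of \cref{lemma 2 out of 3}.
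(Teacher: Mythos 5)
Your proposal takes a genuinely different route from the paper's. After the common reduction to the finite-level Thom spaces $\text{Th}(\widetilde{\mathcal{T}_{n,np}})$, the paper applies \cite[Lemma 3.5]{slices_MGL} to the $\A^1$-homotopy square built from the zero section of $\det(\mathcal{T}_{n,np})$, which identifies the cofiber of $\text{Th}(\widetilde{\mathcal{T}_{n,np}}) \to \text{Th}(\mathcal{T}_{n,np})$ with the smash product $\text{Th}(\mathcal{T}_{n,np}) \wedge \text{Th}(\det(\mathcal{T}_{n,np}))$. This reduces everything to Thom spaces over the \emph{ordinary} Grassmannian, for which cellularity is available from \cite[Theorem 6.4]{cellulardi}, combined with \cite[Lemma 4.15]{bachhopetaperiod} for cellularity of $\SGr_{n,np}$ and closure of cellular objects under smash and cofibers (\cref{lemma 2 out of 3}). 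Your route instead tries to handle $\text{Th}(\widetilde{\mathcal{T}_{n,np}})$ directly through projective bundle presentations over $\SGr_{n,np}$.

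The gap is the auxiliary claim on which both of your two-out-of-three steps rest: that $\mathds{P}(V)$, and hence $\text{Th}(V)$, is cellular for any vector bundle $V$ over any cellular smooth scheme. Cellularity is membership in a localizing subcategory; it does not supply a geometric stratification of the base on which $V$ trivializes, so one cannot simply Nisnevich-locally trivialize and glue --- the trivializing opens and their intersections need not themselves be cellular. The paper flags exactly this obstruction in recalling \cite[Remark 3.12]{cellulardi}: removing a closed subscheme from a cellular scheme need not preserve cellularity, because the Thom space of the normal bundle is not controlled by cellularity of the ambient scheme alone. What makes Dugger--Isaksen's Theorem 6.4 work is the Schubert-cell stratification of $\Gr_{n,np}$, on whose affine strata the tautological bundle is trivial; that is strictly more geometric data than "cellular." To push your approach through you would need an analogous stratification of $\SGr_{n,np}$ trivializing $\widetilde{\mathcal{T}_{n,np}}$ (for instance, by pulling Schubert cells back along the $\mathds{G}_m$-torsor $\SGr_{n,np} \to \Gr_{n,np}$ and checking each piece), and you would also need to address the circularity in your first step: the standard way to analyze $\mathds{P}(L \oplus \mathcal{O})$ is homotopy purity along a section, whose cofiber is again $\text{Th}(L)$. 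The paper's argument is designed precisely to avoid having to construct or trivialize over any stratification of $\SGr_{n,np}$ at all.
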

\begin{proof}
Using \cite[lemma 6.1]{cellulardi} $\MSL$ is weakly equivalent to \[\nhocolim ( \Sigma^{\infty} \MSL_0 \to \Sigma^{-2,-1}\Sigma^{\infty} \MSL_1 \to \Sigma^{-4,-2}\Sigma^{\infty} \MSL_2 \to ....)\] It is enough to show $\MSL_n$ is cellular.\\[1.0000ex]
Exactly the same argument works if we want to prove that $\MSL$ as a $\T^2$-symmetric spectrum is cellular in $\SH(S)$, considered as the homotopy category of $\T^2$-spectra \cite[Theorem 3.2]{PW}. In that case, we consider $\MSL$ as the homotopy colimit of 
\[\Sigma^{\infty} \MSL_0 \to \Sigma^{-4,-2}\Sigma^{\infty} \MSL_2 \to...\to \Sigma^{-4n,-2n}\Sigma^{\infty} \MSL_{2n} \to ....\]
It is then enough to show that $\MSL_{2n}$ is cellular.\\[1.0000ex]
Using the notations from \cref{MSL},
\[\MSL_n := \ncolim_p \text{Th} (\widetilde{\mathcal{T}_{n,np}}|_{\SGr_{n,np}})\]
Here $\widetilde{\mathcal{T}_{n,np}}$ is the pullback of the tautological bundle $\mathcal{T}_{n,np}$ on the Grassmannian $\Gr_{n,np}$, along the map $\SGr_{n,np} \to \Gr_{n,np}$.\\[1.0000ex] 
Let $z: \Gr_{n,np} \to det(\mathcal{T}_{n,np})$ be the zero section, and $\mathcal{N}_z$ be its normal bundle. We have the following commutative diagram:\\
\begin{equation} \label{MSLdiagramCellularity}
    \begin{tikzcd}
	{\pi'^*r^*(\mathcal{T}_{n,np})} & {r^*(\mathcal{T}_{n,np})} & {\mathcal{T}_{n,np}} \\
	\\
	{\SGr_{n,np} := det(\mathcal{T}_{n,np})\backslash z(\Gr_{n,np})} & {det(\mathcal{T}_{n,np})} & {\Gr_{n,np}}
	\arrow["q","{\cong_{\mathds{A}^1}}"', from=1-3, to=3-3]
	\arrow["r", "{\cong_{\mathds{A}^1}}"', from=3-2, to=3-3]
        \arrow["z", bend left, from=3-3, to=3-2]
	\arrow[from=1-2, to=1-3]
	\arrow["p", "{\cong_{\mathds{A}^1}}"', from=1-2, to=3-2]
	\arrow["{\pi'}"', from=3-1, to=3-2]
	\arrow[from=1-1, to=3-1]
	\arrow[from=1-1, to=1-2]
\end{tikzcd}
\end{equation}
Using \cite[lemma 3.5]{slices_MGL}, we get
\[\cofib (\text{Th} (\pi'^*r^*(\mathcal{T}_{n,np})) \to \text{Th} (r^*(\mathcal{T}_{n,np}))) \cong_{\mathds{A}^1} \text{Th}\Bigl(z^*\bigl(r^*(\mathcal{T}_{n,np})\bigr) \oplus \mathcal{N}_z \Bigr)\]
We have 
\[\text{Th}(z^*r^*(\mathcal{T}_{n,np}) \oplus \mathcal{N}_z) \cong \text{Th}((r \circ z)^*(\mathcal{T}_{n,np}) \oplus \mathcal{N}_z)\]

 Using $r \circ z = id_{\Gr_{n,np}}$, and $\text{Th}(\mathcal{N}_z )\cong \text{Th} (det(\mathcal{T}_{n,np})_{\Gr_{n,np}})$, we get the following.
 \[\text{Th}(z^*r^*(\mathcal{T}_{n,np}) \oplus \mathcal{N}_z) \cong \text{Th} ((\mathcal{T}_{n,np})_{\Gr_{n,np}}) \wedge \text{Th} (det(\mathcal{T}_{n,np})_{\Gr_{n,np}})\]
From the proof of \cite[theorem 6.4]{cellulardi}, $\text{Th} ((\mathcal{T}_{n,np})_{\Gr_{n,np}})$ is cellular.
Using \cite[Lemma 4.15]{bachhopetaperiod}, $\SGr_{n,np}$ is cellular.\\[1.000ex]
There is a cofiber sequence \begin{equation} \label{cofiber_SGr}
    \SGr_{n,np} \to \Gr_{n,np} \to \text{Th} (det(\mathcal{T}_{n,np})_{\Gr_{n,np}})
\end{equation} 
Using \cref{cofiber_SGr} and \cref{lemma 2 out of 3}, $\text{Th}(det(\mathcal{T}_{n,np})_{\Gr_{n,np}})$ is cellular.\\[1.00ex]
Now, consider the cofiber sequence
\[\text{Th} (\pi'^*r^*(\mathcal{T}_{n,np})) \to \text{Th} (r^*(\mathcal{T}_{n,np})) \to \text{Th} ((\mathcal{T}_{n,np})_{\Gr_{n,np}}) \wedge \text{Th} (det(\mathcal{T}_{n,np})_{\Gr_{n,np}})\]
We have shown that the cofiber is cellular, as cellular objects are closed under smash product \cite[Lemma 3.3]{cellulardi}.\\[1.000ex] 
As vector bundle projection is $\A^1$-weak equivance, we get  \cref{MSLdiagramCellularity}, \[\text{Th} (r^*(\mathcal{T}_{n,np})) \cong \text{Th} (\mathcal{T}_{n,np})\] So, it is also cellular.\\[1.000ex] 
That gives the cellularity of 
\[\MSL_n := \nhocolim_p \text{Th} (\pi'^*r^*(\mathcal{T}_{n,np}))\]
\end{proof}
Now, we are going to discuss more about how we can use strong tools from classical homotopy theory in stable motivic homotopy category, when we restrict ourselves in the cellular subcategory.\\[1.01ex] 
In the category $\SH(S)$, the stable homotopy sheaves $\underline{\pi}_{p,q}(-)$ detect stable equivalences \cite[Lemma 3.7]{Jardine2000}. For any $E \in \SH(S)$, we can consider the stable homotopy groups $\pi_{p,q}(E) := \Hom_{\SH(S)}(\Sigma^{p,q} \mathds{1}, E)$. These groups are the global sections of the stable homotopy sheaves \[\underline{\pi}_{p,q}(E)(S) \cong \pi_{p,q}(E)\]
For a general spectrum $E \in \SH(S)$, $\pi_{p,q}(E) \cong 0$ for all integers $p,q$; does not imply that $E \cong 0$ in $ \SH(S)$. This is expected, as we need to consider maps from all generators of $\SH(S)$, not just $\mathds{1}$. Also, stable equivalences between generic motivic spectra cannot be detected by these groups $\pi_{p,q}(-)$.\\[1.01ex]
For cellular spectra, it is enough to consider maps from the sphere spectrum. 
\begin{lemma}\label{CellularHomotopyGroupsDetects} \cite[Proposition 7.1]{cellulardi}
 Let $E \in \SH(S)$ be a cellular spectrum. Then $\pi_{p,q}(E) \cong 0$ for all integers $p,q$; iff $E \cong 0$ in $ \SH(S)$. 
\end{lemma}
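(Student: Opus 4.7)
The forward direction is immediate, so the plan is to establish the reverse implication: if $E$ is cellular and every $\pi_{p,q}(E)$ vanishes, then $E\cong 0$ in $\SH(S)$. My strategy is the standard compact-generator trick: exhibit the motivic spheres as generators of the cellular subcategory, and then upgrade the vanishing of maps from spheres to vanishing of maps from all cellular objects, including $E$ itself.

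First I would fix $E$ and consider the full subcategory
\[
\mathcal{C} := \{ X \in \SH(S) \ | \ \Hom_{\SH(S)}(X, E) = 0 \}.
\]
The first observation is that $\mathcal{C}$ is a localizing triangulated subcategory of $\SH(S)$: it is closed under shifts because $\Hom(\Sigma X, E) \cong \Hom(X, \Omega E)$ is a shift of the Hom-groups, it is closed under cofiber sequences by the long exact sequence of mapping spectra, and it is closed under arbitrary coproducts because $\Hom$ out of a coproduct is a product. Next, by hypothesis, for every bidegree $(p,q)$ we have
\[
\Hom_{\SH(S)}(\Sigma^{p,q}\mathds{1}, E) \cong \pi_{p,q}(E) = 0,
\]
so every motivic sphere lies in $\mathcal{C}$.

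Now I invoke the definition of the cellular subcategory (\cref{DefinitionStableCellularity}): it is the smallest localizing triangulated subcategory of $\SH(S)$ containing all motivic spheres $\Sigma^{p,q}\mathds{1}$. Since $\mathcal{C}$ is a localizing triangulated subcategory containing all the motivic spheres, it contains the entire cellular subcategory. In particular, since $E$ is cellular by hypothesis, we conclude $E \in \mathcal{C}$, which means $\Hom_{\SH(S)}(E, E) = 0$. Hence the identity morphism $\mathrm{id}_E$ is zero, which forces $E \cong 0$ in $\SH(S)$.

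I do not anticipate a genuine obstacle here, as the argument is a routine application of the universal property of the localizing subcategory generated by a set of compact objects. The only mildly delicate point is verifying that $\mathcal{C}$ is closed under arbitrary (not just finite) coproducts, which follows because mapping out of a coproduct turns into a product of Hom groups inside the abelian group $\Hom_{\SH(S)}(-,E)$; one could alternatively phrase the argument in terms of mapping spectra and the fact that a product of zero spectra is zero. Either formulation reduces the lemma to the tautology that the identity of a zero-Hom object must vanish.
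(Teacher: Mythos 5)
Your proof is essentially correct and is the standard Dugger--Isaksen argument; the paper itself quotes this statement from the cited reference without reproducing a proof, so there is nothing in the source to diverge from.

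There is one small imprecision worth tightening. As written, you define
\[
\mathcal{C} = \{X \in \SH(S) : \Hom_{\SH(S)}(X,E)=0\},
\]
i.e.\ only the degree-$(0,0)$ mapping group vanishes, and then claim closure under shifts via $\Hom(\Sigma X,E)\cong\Hom(X,\Omega E)$. That identity does not let you conclude $\Hom(\Sigma X,E)=0$ from $\Hom(X,E)=0$: the latter says nothing about $\Hom(X,\Omega E)$. The clean fix is to define $\mathcal{C}$ as those $X$ for which $\Hom(\Sigma^{p,q}X,E)=0$ for \emph{all} bidegrees $(p,q)\in\Z^2$ (equivalently, the bigraded mapping groups, or the mapping spectrum, vanish). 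With that definition $\mathcal{C}$ is tautologically closed under $\Sigma^{p,q}$, it is still closed under cofiber sequences (long exact sequence) and arbitrary coproducts, and your verification that the spheres lie in $\mathcal{C}$ already checks exactly this: $\Hom(\Sigma^{p',q'}\Sigma^{p,q}\mathds{1},E)\cong\pi_{p+p',q+q'}(E)=0$ for all bidegrees. The rest of your argument then goes through verbatim: $\mathcal{C}$ is a localizing triangulated subcategory containing all motivic spheres, hence contains the cellular subcategory, hence $E\in\mathcal{C}$, hence $\mathrm{id}_E=0$ and $E\cong 0$.
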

As a corollary of \cref{CellularHomotopyGroupsDetects}, we get that a map between cellular spectra is an equivalence iff it induces isomorphism of the stable homotopy groups.
\begin{definition}
    For any spectrum $E \in \SH(S)$, and some $M \in \SH(S)$, we can talk about the $E$-cohomology and $E$-homology groups of $M$. 
    \begin{itemize}
        \item \textbf{$E-$cohomology group of $M$}: $E^{p,q}(M) := \Hom_{\SH(S)}(M, \Sigma^{p,q} E)$
        \item \textbf{$E-$homology group of $M$}: $E_{p,q}(M) := \pi_{p,q} (M \wedge E)$
    \end{itemize}
     As a special case, for $X \in \SmS_S$, we define 
    \begin{itemize}
        \item \textbf{$E-$cohomology} $E^{p,q}(X) := \Hom_{\SH(S)}(\Sigma^{\infty} X_{+}, \Sigma^{p,q} E)$
        \item \textbf{$E-$homology} $E_{p,q}(X) := \pi_{p,q} (\Sigma^{\infty} X_{+} \wedge E)$
    \end{itemize}
    
\end{definition}
To formulate something similar to K\"unneth spectral sequence, more carefully curated finiteness conditions are required. In \cite[Section 8]{cellulardi}, the smallest full triangulated subcategory containing all the spheres $S^{p,q}$ is called the class of \text{finite cell complexes}. It has all the finite direct sums, but might not have the infinite ones. We have the following analog of K\"unneth spectral sequence.
\begin{theorem} \label{TorSpectral} \cite[Theorem 8.6]{cellulardi}
 Let $E$ be a ring spectrum. Let $A$ and $B$ are motivic spectra where $A$ is a finite cell complex. There exists a strongly convergent trigraded spectral sequence\[[Tor_a^{E^{*,*}}(E^{*,*}A, E^{*,*}B)]^{(b,c)} \Longrightarrow E^{b-a,c}(A \wedge B)\]
\end{theorem}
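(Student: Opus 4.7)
The plan is to construct the spectral sequence by realizing a free resolution of $E^{*,*}A$ as a tower of cell complexes, and then extracting the associated exact couple after smashing with $B$. This is the standard Adams/EKMM style construction of a K\"unneth spectral sequence, adapted to the tri-graded motivic setting; the key input is that $A$ is a finite cell complex, which both guarantees the existence of a good cellular tower and ensures strong convergence.

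First I would construct an $E$-injective (equivalently, $E$-free in the sense of wedges of shifted copies of $E$) resolution
\[
A \xleftarrow{} A^{(0)} \xleftarrow{} A^{(1)} \xleftarrow{} A^{(2)} \xleftarrow{} \cdots
\]
in $\SH(S)$, where each $A^{(i)}$ is a (possibly infinite) wedge of spheres $S^{p_{\alpha},q_{\alpha}}$, chosen so that applying $E^{*,*}$ yields a free $E^{*,*}$-resolution
\[
\cdots \to F_2 \to F_1 \to F_0 \to E^{*,*}A \to 0,
\]
with $F_i = E^{*,*}A^{(i)}$. This uses only that $E^{*,*}$ of a wedge of shifted spheres is a free $E^{*,*}$-module, together with the cellular structure of $A$ which gives enough compact cells to surject onto $E^{*,*}A$. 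Since $A$ is a finite cell complex, one can arrange the resolution to terminate or to be suitably bounded in each bidegree.

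Next I would take the resulting filtered tower, smash everywhere with $B$, and apply $[\,-\,,\Sigma^{*,*}E]_{\SH(S)}$ to produce an exact couple, hence a spectral sequence. On each sphere layer $A^{(i)} \wedge B$, the $E$-cohomology is identified with $\Hom_{E^{*,*}}(F_i, E^{*,*}B)$ (after a degree shift coming from the sphere indices), so the $E_1$-page is the complex $\Hom_{E^{*,*}}(F_\bullet, E^{*,*}B)$. Taking cohomology yields
\[
E_2^{a,(b,c)} \cong \bigl[\Tor_a^{E^{*,*}}(E^{*,*}A, E^{*,*}B)\bigr]^{(b,c)},
\]
where the three gradings are: the homological degree $a$ from the resolution, and the bidegree $(b,c)$ from the motivic bigrading on $E$. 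The finite cell hypothesis on $A$ ensures the filtration has only finitely many nonzero stages in each bidegree, which yields strong convergence of the spectral sequence to $E^{b-a,c}(A \wedge B)$.

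The main obstacle is the geometric realization step: showing that an arbitrary free $E^{*,*}$-resolution of $E^{*,*}A$ can be realized as a tower of wedges of motivic spheres in $\SH(S)$ compatibly with the $E$-module structure. In the classical topological case this goes back to Adams and is routine once one has the correct cellular machinery; in the motivic setting one needs the cellularity of $A$ (so that spheres are enough) and compactness of the individual sphere spectra to commute $E^{*,*}$ with the wedges appearing in the $A^{(i)}$. Strong convergence then follows from boundedness of the cellular filtration, as the $E_{\infty}$-page is reached in finitely many steps in each tri-degree.
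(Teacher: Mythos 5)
Your overall strategy---realize a free $E^{*,*}$-resolution geometrically, smash with $B$, extract an exact couple, and use finiteness of $A$ for strong convergence---is the right one and is what the cited Dugger--Isaksen proof (following the classical Adams/EKMM construction) does. But there are two concrete problems with how you set it up. First, you identify the $E_1$-page as $\Hom_{E^{*,*}}(F_\bullet, E^{*,*}B)$ and then claim its cohomology is $\operatorname{Tor}$. That is wrong: $\Hom$ applied to a free resolution computes $\Ext$, not $\operatorname{Tor}$. What you need is the K\"unneth isomorphism for the layers, $E^{*,*}(W \wedge B) \cong E^{*,*}(W) \otimes_{E^{*,*}} E^{*,*}(B)$ for $W$ a finite wedge of spheres, so that the $E_1$-page is $F_\bullet \otimes_{E^{*,*}} E^{*,*}B$, whose homology is $\operatorname{Tor}^{E^{*,*}}_a(E^{*,*}A, E^{*,*}B)$.

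Second, and more importantly, your description of the geometric realization is internally inconsistent in a way that papers over the real difficulty. You call the $A^{(i)}$ ``wedges of spheres $S^{p_\alpha,q_\alpha}$'' but also describe them parenthetically as ``wedges of shifted copies of $E$''; these are very different objects, and the gap between them is exactly the point. A degree-$(p,q)$ generator of $E^{*,*}A$ is a map $A \to \Sigma^{p,q}E$ in $\SH(S)$, not a map to or from a sphere, and the cells of $A$ (maps $S^{p,q}\to A$) bear no direct relation to generators of $E^{*,*}A$. So the assertion that ``the cellular structure of $A$ gives enough compact cells to surject onto $E^{*,*}A$'' does not make sense, and one cannot in general realize an arbitrary free $E^{*,*}$-resolution of $E^{*,*}A$ by a tower whose layers are wedges of motivic spheres mapping to $A$. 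The correct realization works with free $E$-module spectra (genuine wedges of shifted copies of $E$) in the category of $E$-modules, together with dualizability of the finite cell complex $A$; this is precisely where the ring structure on $E$ and the finite-cellularity hypothesis do real work, and it is not routine. Your sketch correctly flags this step as ``the main obstacle'' but then waves it away, and the wave hides a genuine gap.
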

If $E^{*,*}(A)$ is flat over $E^{*,*}$, due to the vanishing of higher Tor's, we get K\"unneth isomorphism 
\begin{equation} \label{Kunneth}
 E^{*,*}(A) \otimes_{E^{*,*}} E^{*,*}(B) \cong E^{*,*}(A \wedge B)   
\end{equation}
If we start with an oriented ring spectrum $E$ (see \cref{orientation}), this can be generalized a bit more. The class of \text{finite cell complexes} can be expanded to be closed under taking Thom spaces of vector bundles over smooth schemes. This class is defined as \text{linear motivic spectra} in \cite[Definition 8.9]{cellulardi}. For an oriented $E$, \cref{TorSpectral} applies when $A$ is linear \cite[Theorem 8.12]{cellulardi}. Using \cite[Remark 8.10]{cellulardi}, one can show that $\MGL$, and $\MSL$ are linear.

For a strongly dualizable cellular spectrum $A$ with $E^{*,*}(A)$, a free $E^{*,*}$ module of finite rank, and a cellular spectrum $B$, denoting the dual of $A$ as $A^{\vee}$ we have the following isomorphism \cite[p.22, Section 6.1]{Hoyoismalgmot} \begin{equation} \label{kunnethweuse}
   E_{*,*}(A^{\vee}) \otimes_{E_{*,*}} E_{*,*}(B) \cong E_{*,*}(A^{\vee} \wedge B) 
\end{equation}
The $E$ homology situation is conveniently generalized and stated for $E$ to be Eilenberg-Maclane spectra in \cite[Lemma 5.2]{Hoyoismalgmot}. Very convenient 
\say{finiteness} and cellularity assumptions can be stated for strongly dualizable spectra to make sure their $E$-cohomology and homology are dual to each other as specified in the following \cref{dualmsl}, formulated and proved in \cite[Corollary 4.10]{bachhopetaperiod}
\begin{proposition} \label{dualmsl}
 Let $X$ be the sequential colimit of the directed system $X_0 \to X_1 \to...$ in $\SH(S)$. Let $E$ be a (homotopy) commutative ring spectrum with $\lim_i^1 E^{*,*}(X_i) =0$. Then $E^{*,*}(X) \cong \lim_i (E^{*,*}(X_i))$. If each $X_i$ is cellular, strongly dualizable with $E^{*,*}(X_i)$ or $E_{*,*}(X_i)$ flat, we have that $E$-homology and cohomology of $X$ are dual to each other in the following sense \[E^{*,*}(X) \cong \Hom_{E_{*,*}}(E_{*,*}(X), E_{*,*})\] and \[E_{*,*}(X) \cong \Hom_{E_{*,*},c}(E^{*,*}(X), E_{*,*})\]
\end{proposition}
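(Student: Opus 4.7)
The plan is to establish the three claims in sequence, starting from the Milnor exact sequence and then invoking strong dualizability level-wise.

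For the identification $E^{*,*}(X) \cong \lim_i E^{*,*}(X_i)$, I would realize $X$ as the mapping telescope of the directed system, producing the standard cofiber sequence
\[\bigvee_i X_i \to \bigvee_i X_i \to X\]
in $\SH(S)$. Applying $[-, \Sigma^{p,q}E]$ converts this into the Milnor short exact sequence
\[0 \to {\lim_i}^{1} E^{p-1,q}(X_i) \to E^{p,q}(X) \to \lim_i E^{p,q}(X_i) \to 0,\]
and the hypothesis ${\lim}^{1} E^{*,*}(X_i) = 0$ collapses this to the claimed isomorphism.

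For the cohomology-side duality, strong dualizability of $X_i$ produces a dual $X_i^{\vee}$ together with a canonical evaluation pairing $E^{*,*}(X_i) \otimes_{E_{*,*}} E_{*,*}(X_i) \to E_{*,*}$. Cellularity together with flatness of $E_{*,*}(X_i)$ (equivalently $E^{*,*}(X_i)$, since strong dualizability interchanges the two up to a degree shift via $E^{*,*}(X_i) \cong E_{*,*}(X_i^{\vee})$) upgrades this pairing to a level-wise perfect duality $E^{*,*}(X_i) \cong \Hom_{E_{*,*}}(E_{*,*}(X_i), E_{*,*})$. Since $E$-homology commutes with sequential homotopy colimits, $E_{*,*}(X) \cong \colim_i E_{*,*}(X_i)$, and combining with the first step while converting the colimit to a limit under $\Hom_{E_{*,*}}(-, E_{*,*})$ gives
\[E^{*,*}(X) \cong \lim_i \Hom_{E_{*,*}}(E_{*,*}(X_i), E_{*,*}) \cong \Hom_{E_{*,*}}(E_{*,*}(X), E_{*,*}).\]

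For the homology-side identification, the presentation $E^{*,*}(X) \cong \lim_i E^{*,*}(X_i)$ endows $E^{*,*}(X)$ with the natural pro-topology, with $E_{*,*}$ regarded as discrete. The standard identification of the continuous dual of a filtered inverse limit with the filtered colimit of ordinary duals yields
\[\Hom_{E_{*,*}, c}(\lim_i E^{*,*}(X_i), E_{*,*}) \cong \colim_i \Hom_{E_{*,*}}(E^{*,*}(X_i), E_{*,*}) \cong \colim_i E_{*,*}(X_i) \cong E_{*,*}(X),\]
where the middle isomorphism is level-wise strong dualizability applied in the reverse direction. The main subtlety across the argument is verifying that flatness on only one side of the duality suffices to make the evaluation pairing perfect at each level; this is where the $E^{*,*}(X_i) \cong E_{*,*}(X_i^{\vee})$ identification does the real work, transferring flatness between the two $E_{*,*}$-modules so that the universal coefficient statement reduces to the familiar case. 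Once this is in hand, the remaining manipulations of limits, colimits and continuous $\Hom$ are formal.
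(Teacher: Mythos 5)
The paper does not prove this statement; it cites it directly from \cite[Corollary~4.10]{bachhopetaperiod}, so there is no in-paper argument to compare against. Your reconstruction follows the standard route one would expect behind that reference: Milnor exact sequence for the $\lim$-identification, level-wise perfect duality from strong dualizability together with cellularity and flatness, and then the formal exchange of limits/colimits under the (continuous) $\Hom$. This is sound, and I have no objection to the overall structure.

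The one place where your sketch is thinner than it should be is the level-wise duality step, which is actually the mathematical content of the proposition. You flag that ``flatness on only one side suffices'' is the subtlety and say the identification $E^{*,*}(X_i) \cong E_{*,*}(X_i^{\vee})$ ``does the real work,'' but that isomorphism by itself only interchanges flatness of $E^{*,*}(X_i)$ with flatness of $E_{*,*}(X_i^{\vee})$, not with flatness of $E_{*,*}(X_i)$, and by itself does not explain why the evaluation pairing is perfect (flatness kills $\mathrm{Tor}$, not $\mathrm{Ext}$). The mechanism that actually closes the argument is the coevaluation: for $X_i$ cellular and strongly dualizable with $E_{*,*}(X_i)$ flat, the K\"unneth isomorphism $E_{*,*}(X_i \wedge X_i^{\vee}) \cong E_{*,*}(X_i) \otimes_{E_{*,*}} E_{*,*}(X_i^{\vee})$ lets you read off from the coevaluation $\mathds{1} \to X_i \wedge X_i^{\vee}$ and the triangle identities that $E_{*,*}(X_i)$ is finitely generated projective over $E_{*,*}$; only then is the pairing perfect, is $E^{*,*}(X_i) \cong \Hom_{E_{*,*}}(E_{*,*}(X_i),E_{*,*})$, and does flatness transfer to the other side via the dual module. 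Once you make that explicit, the rest of your limit/colimit and continuous-dual manipulations (which match the paper's explicit convention that a continuous map means one factoring through some finite stage) are formal and correct. So: right approach, but you should spell out the coevaluation/projectivity step rather than gesture at ``the familiar case.''
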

Here while dualizing, we consider only the maps from $\lim_p E^{*,*} (X_i)$ to $E_{*,*}$, which are continuous with respect to inverse limit topology on the domain (constructed taking discrete topology on each $E^{*,*}  (X_i)$), and discrete topology on the codomain. That effectively means only the maps factoring through $E^{*,*}  (X_i)$, for some finite $i$, are being considered.
\subsection{Homotopy \textit{t}-structure and completion}
\begin{definition} \label{definitionHomotopyTstructure}
       Let $\SH(S)_{\geq d}$ denote the subcategory of $\SH(S)$, generated by $\Sigma^{p,q} \Sigma^{\infty} X_{+}$ under homotopy colimits and extensions for $X \in Sm_S$, and $p-q \geq d$. We have that $\SH(S)_{\geq d}$ is the non-negative part of a unique $t$-structure using \cite[1.4.4.11]{HA}. It is called the homotopy $t$-structure.\\[1.000ex] From the canonical truncation functors, we get the following cofiber sequence for any spectrum $E$:
       \[E_{\geq d} \to E \to E_{\leq d-1} \to \Sigma^{1,0} E_{\geq d}\] 
   \end{definition}
   The functor $\tau_{\leq 0} \circ \tau_{\geq 0} \xrightarrow{\cong} \tau_{\geq 0} \circ \tau_{\leq 0} : \SH(S) \to \SH(S)^{\heartsuit}$ is denoted by $\underline{\pi}_0$. 
\begin{remark}
    A spectrum $E\in \SH(S)$ is $n$-connected if the homotopy sheaves ${\underline{\pi}}_{p,q}(E)$ vanish whenever $p-q < n$. Let's fix a base scheme $S$. We say \textbf{stable $\A^1$-connectivity property} holds over $S$, if the $\A^1$-localization functor $L_{\A^1}$ preserves $0$-connected or connected spectra. 
As Morel's stable $\A^1$-connectivity property holds over fields $k$ \cite{StableAoneConnectivity}, the previous formulation of the $t$-structure coincides with Morel's actual formulation of homotopy $t$-structure using homotopy sheaves $\underline{\pi}_{p,q}$ \cite[Theorem 2.3]{Hoyoismalgmot}.\\
For any spectrum $E\in \SH(k)$,
\begin{itemize}
    \item $E \in E_{\geq d}$ iff $\underline{\pi}_{p,q} = 0$ for $p-q < d$.
    \item $E \in E_{\leq d}$ iff $\underline{\pi}_{p,q} = 0$ for $p-q > d$.
\end{itemize}
\end{remark}
\begin{remark}
 The homotopy $t$-structure in $\SH(k)$ is left complete. In other words, the map $E \to \nholim_{n} E_{\leq n}$ is an equivalence \cite[Corollary 2.4]{Hoyoismalgmot}.
\end{remark}
Let $\mathcal{C}$ be any presentable, symmetric monoidal stable $\infty$-category. For any commutative algebra object $E \in \textit{CAlg}(\mathcal{C})$, the \textbf{$E$-nilpotent completion} of any $X \in \mathcal{C}$, denoted as $X^{\wedge}_{E}$, is defined as the limit of the standard cobar construction
\[\Delta_{+} \to \mathcal{C}\]  
\[[n] \to X \otimes E^{\otimes n+1} \]
    \[X^{\wedge}_{E} := \nholim_n (X \otimes E^{\otimes n+1})\]
For the rest of this section, we consider $(\mathcal{C},\otimes, \mathds{1})$ to be any presentable, symmetric monoidal stable $\infty$-category with a compatible $t$-structure (i.e. $\mathcal{C}_{\geq 0} \otimes \mathcal{C}_{\geq 0} \subseteq \mathcal{C}_{\geq 0}$).
\begin{theorem}\cite[Theorem 2.1]{Bachmann2021TopologicalMF} \cite[Lemma 7.2.12]{mantovani} \label{completion}
Let $\mathcal{C}$ be any presentable, symmetric monoidal stable $\infty$-category with a compatible $t$-structure, and the $t$-structure is left complete.
Consider a bounded below object $X \in \mathcal{C}$ ($X \in \mathcal{C}_{\geq n}$, for some $n \in \Z$), and some $E \in \textit{CAlg}(\mathcal{C}_{\geq 0})$. Additionally let's assume that $\underline{\pi}_0(E) \in \textit{CAlg}(\mathcal{C}^{\heartsuit})$ is an idempotent, that is $\underline{\pi}_0(E) \otimes^{\heartsuit} \underline{\pi}_0(E) \to \underline{\pi}_0(E)$ is an isomorphism.
Then there is an equivalence \[X^{\wedge}_E \to X^{\wedge}_{\underline{\pi}_0(E)}\]   
\end{theorem}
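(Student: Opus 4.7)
The plan is to explicitly construct the comparison map and then prove it is an equivalence by a connectivity argument driven by left completeness. The truncation $\phi : E \to \underline{\pi}_0(E) =: R$ is a morphism in $\textit{CAlg}(\mathcal{C}_{\geq 0})$, so it functorially induces a map between the cobar cosimplicial objects $X \otimes E^{\otimes \bullet + 1} \to X \otimes R^{\otimes \bullet + 1}$ and, on totalization, the desired map $\psi : X^{\wedge}_E \to X^{\wedge}_R$.

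By left completeness of the $t$-structure it suffices to show that $\tau_{\leq m}\psi$ is an equivalence for every $m \in \mathbb{Z}$. Let $F := \mathrm{fib}(\phi)$; since $\phi$ is the $0$-truncation of a connective object, $F \in \mathcal{C}_{\geq 1}$. Using the assumption that $\otimes$ is compatible with the $t$-structure together with $X \in \mathcal{C}_{\geq n}$, a cube-decomposition of $E^{\otimes k+1} \to R^{\otimes k+1}$ into smash products in which $F$ appears as at least one tensor factor shows that the fiber of the $k$-th level map $X \otimes E^{\otimes k+1} \to X \otimes R^{\otimes k+1}$ lies in $\mathcal{C}_{\geq n+1}$, and iterating the cube one obtains increasingly connective bounds as $k$ grows.

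The idempotence hypothesis is what then pins down $X^{\wedge}_R$. Because $R \otimes^{\heartsuit} R \to R$ is an isomorphism in the heart and $\tau_{\leq 0}(R \otimes R) \cong R \otimes^{\heartsuit} R$ by compatibility of the tensor product with the $t$-structure, the fiber of the multiplication $R \otimes R \to R$ also lies in $\mathcal{C}_{\geq 1}$. An inductive application of this fact to the codegeneracies in the cobar of $R$ shows that, on $\tau_{\leq m}$, the cobar cosimplicial object becomes effectively constant beyond a finite cosimplicial degree depending only on $m$ and $n$. Combining this with the connectivity bounds on the fiber above, the partial totalizations $\mathrm{Tot}_k$ of the two cobars agree on $\tau_{\leq m}$ for $k \gg 0$, whence $\tau_{\leq m}\psi$ is an equivalence, and left completeness lets us assemble these into the equivalence $\psi$.

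The main obstacle is precisely the bookkeeping of the cosimplicial structure: while the pointwise connectivity estimates are reasonably straightforward, what one really needs is that these estimates are compatible with all face and degeneracy maps in a uniform way, so that the $\mathrm{Tot}$ tower is eventually $\tau_{\leq m}$-constant for large cosimplicial degree. This forces an induction along the skeletal filtration of the cosimplicial diagram, and it is here that left completeness of the $t$-structure is indispensable in promoting term-wise bounds to a statement about the homotopy limit that defines the completion.
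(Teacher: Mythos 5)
The paper does not actually give its own proof of this statement; it only cites Bachmann (Theorem~2.1) and Mantovani (Lemma~7.2.12). So I can only assess your argument on its own terms, and there I see a genuine gap in the connectivity bookkeeping that the proposal itself identifies as the main obstacle.

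The problem is the claim that ``iterating the cube one obtains increasingly connective bounds as $k$ grows'' for the fibers of the level maps $X \otimes E^{\otimes k+1} \to X \otimes R^{\otimes k+1}$, where $R = \underline{\pi}_0(E)$ and $F = \mathrm{fib}(E\to R)$. This is false. Writing $E^{\otimes k+1} \to R^{\otimes k+1}$ as the long diagonal of the $(k{+}1)$-cube on $E\to R$, the fiber of that diagonal has a finite filtration whose associated graded pieces are the terms $F^{\otimes j}\otimes R^{\otimes(k+1-j)}$ (distributed over the $\binom{k+1}{j}$ subsets of the coordinates) for $j\geq 1$. The $j=1$ pieces $F\otimes R^{\otimes k}$ are only $(n{+}1)$-connective after tensoring with $X$, and they do not disappear as $k$ grows; if $\underline{\pi}_1(E)\neq 0$ then $\underline{\pi}_{n+1}$ of the fiber is nonzero for every $k$. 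What \emph{does} become increasingly connective is the \emph{total fiber} of the cube, which is $F^{\otimes(k+1)}\in\mathcal{C}_{\geq k+1}$; but the total fiber of the cube is not the fiber of its long diagonal, and you do not explain how the total-fiber estimate feeds into the comparison of cobar constructions. Once the level-wise connectivity bound is stuck at $n{+}1$, the intended conclusion that $\tau_{\leq m}\mathrm{Tot}_k$ stabilizes for large $k$ does not follow: in the $\mathrm{Tot}$-tower the fiber of $\mathrm{Tot}_s \to \mathrm{Tot}_{s-1}$ is $\Omega^s$ of a normalization living in $\mathcal{C}_{\geq n+1}$, hence only lies in $\mathcal{C}_{\geq n+1-s}$, which gets \emph{less} connective as $s$ grows. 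So left completeness alone cannot promote a level-wise $(n{+}1)$-connective bound to an equivalence of totalizations, and the crux of the argument collapses.

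A secondary issue is the claim that the $R$-cobar becomes ``effectively constant on $\tau_{\leq m}$.'' Your observation that $\mathrm{fib}(R\otimes R\to R)\in\mathcal{C}_{\geq 1}$ is correct, and the same holds for all codegeneracies. But the coface maps (unit insertions $R^{\otimes k}\to R^{\otimes k+1}$) do \emph{not} in general have fiber in $\mathcal{C}_{\geq 1}$: for $\mathcal{C}=\mathrm{Sp}$ and $R=\Z/2$, the map $\mathrm{H}\Z/2 \to \mathrm{H}\Z/2\otimes\mathrm{H}\Z/2$ has cofiber $\overline{R}\otimes\mathrm{H}\Z/2$ with $\pi_1\neq 0$, hence its fiber has $\pi_0\neq 0$. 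So the $R$-cobar is not term-wise $\tau_{\leq 0}$-constant, let alone $\tau_{\leq m}$-constant for larger $m$, and the idempotence hypothesis cannot be used in the way you suggest. The idempotence is genuinely needed, but to control $\underline{\pi}_0$ of the levels $R^{\otimes k+1}$ (they all equal $R$), not to trivialize the cosimplicial structure; closing the argument requires the more careful pro-object / convergence analysis carried out in the cited references rather than the naive stabilization of $\tau_{\leq m}\mathrm{Tot}_k$.
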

\begin{remark}\label{constructIdempotents}
 One standard way of constructing idempotent algebras in $\textit{CAlg}(\mathcal{C}^{\heartsuit})$ is the following. Let's consider $L_1,L_2,...,L_n \in \mathcal{C}_{\geq 0}$, and given maps $x_i: L_i \to \mathds{1}$. We define
\[ X/({x_1}^{m_1}, {x_2}^{m_2},...,{x_n}^{m_n}) := X \otimes \hocofib({x_1}^{m_1}: {L_1}^{m_1} \to \mathds{1}) \otimes... \otimes \hocofib({x_n}^{m_n}: {L_n}^{m_n} \to \mathds{1})\]
Here $x_i^{m_i}$ denotes tensor product, not iterated composition.\\
The object $\underline{\pi}_0(X/({x_1}^{m_1}, {x_2}^{m_2},...,{x_n}^{m_n})) \in \textit{CAlg}(\mathcal{C}^{\heartsuit})$ is an idempotent. This fact is mentioned and used in \cite[Section 2]{Bachmann2021TopologicalMF}. A proof can be found in \cite[Remark 7.2.4]{mantovani}.   
\end{remark}
We will use the following weaker version of \cite[Theorem 2.2]{Bachmann2021TopologicalMF}.
\begin{theorem}\label{completion2}
    Consider any bounded below $X \in \mathcal{C}$, and any $f: \mathds{1} \to \mathds{1}$. If $\mathcal{C}$ is left complete, there is an equivalence
    \[X^{\wedge}_{\underline{\pi_0}(\mathds{1}/f)} \xrightarrow{\cong} X^{\wedge}_f \]
\end{theorem}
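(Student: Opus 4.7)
The plan is to apply Theorem~\ref{completion} with $E := \mathds{1}/f = \cofib(f : \mathds{1} \to \mathds{1})$, and then to match $f$-adic completion with the resulting nilpotent completion. The nontrivial hypothesis is delivered by Remark~\ref{constructIdempotents} in its simplest form: taking $n=1$, $L_1 = \mathds{1}$, $x_1 = f$ and $m_1 = 1$ immediately gives that $\underline{\pi}_0(\mathds{1}/f) \in \textit{CAlg}(\mathcal{C}^{\heartsuit})$ is idempotent. The remaining hypotheses are inherited from the statement: $X$ is bounded below, the $t$-structure is left complete, and $\mathds{1}/f \in \mathcal{C}_{\geq 0}$ because $\mathds{1} \in \mathcal{C}_{\geq 0}$ and the connective part of a compatible $t$-structure is closed under cofibers.

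Next I would identify $X^{\wedge}_f$ with $X^{\wedge}_{\mathds{1}/f}$ by unwinding conventions: by definition the $f$-adic completion is the limit of the $f$-adic tower $\{X/f^n\}$, and a standard cofinality/Amitsur argument shows that this tower is equivalent to the partial totalizations of the cobar construction $\{X \otimes (\mathds{1}/f)^{\otimes(n+1)}\}$, so the two completions coincide. Once this is in place, Theorem~\ref{completion} yields
\[X^{\wedge}_f \;\simeq\; X^{\wedge}_{\mathds{1}/f} \;\simeq\; X^{\wedge}_{\underline{\pi}_0(\mathds{1}/f)},\]
which is precisely the desired equivalence.

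The step that needs the most care is the identification $X^{\wedge}_f \simeq X^{\wedge}_{\mathds{1}/f}$, since the cofiber $\mathds{1}/f$ does not in general inherit a canonical $E_{\infty}$-algebra structure from its presentation as a cofiber of a self-map of the unit. The cleanest way to sidestep this obstacle is to work throughout with the idempotent $\underline{\pi}_0(\mathds{1}/f)$, which does live in $\textit{CAlg}(\mathcal{C}^{\heartsuit})$ by Remark~\ref{constructIdempotents}, and to compare its cobar tower directly with the $f$-adic tower via the associated-graded argument, thereby bypassing any commutative algebra structure on $\mathds{1}/f$ itself — this is essentially the content of the proof of the full [Bachmann-Hopkins] statement specialized to a single element $f$.
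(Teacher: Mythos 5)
The paper does not actually prove this statement; it only records it as a ``weaker version'' of \cite[Theorem 2.2]{Bachmann2021TopologicalMF} and cites that reference. So there is no internal proof to compare against, and your attempt has to be judged on its own.

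Your opening plan --- apply \cref{completion} with $E := \mathds{1}/f$ --- cannot work as stated, and you yourself recognize this later: \cref{completion} requires $E \in \textit{CAlg}(\mathcal{C}_{\geq 0})$, and the cofiber $\mathds{1}/f$ carries no natural commutative algebra structure. For the same reason the ``cofinality/Amitsur'' identification $X^{\wedge}_f \simeq X^{\wedge}_{\mathds{1}/f}$ in your second paragraph is not even well-posed with the paper's definitions: the $E$-nilpotent completion is defined there via the cobar construction, which only gives a cosimplicial diagram when $E$ has a (co)algebra structure. One can salvage the idea by defining an Adams tower $X/\bar E^{\,n}$ for $\bar E := \operatorname{fib}(\mathds{1} \to \mathds{1}/f)$, which indeed recovers the $f$-adic tower, but then there is no cosimplicial object and \cref{completion} still does not apply.

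Your final paragraph correctly locates the real work: one must compare the cobar tower of the idempotent $\underline{\pi}_0(\mathds{1}/f)$ directly with the $f$-adic tower $\{X/f^n\}$. But this comparison is not a formal cofinality statement --- it is precisely where the hypotheses of bounded-belowness of $X$ and left completeness of the $t$-structure enter, via connectivity estimates showing that truncations $(X/f^n)_{\leq k}$ interpolate with the partial totalizations of the $\underline{\pi}_0(\mathds{1}/f)$-cobar construction. You gesture at this (``the associated-graded argument'', ``essentially the content of the proof of the full statement'') but supply none of the estimates. Since that interpolation \emph{is} the theorem, the proposal as written is a correct roadmap but not a proof: the decisive step is asserted by reference to the cited source rather than carried out.
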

\begin{remark}\label{constructEta}
   Let's consider the canonical map 
\[S^{3,2} \cong \A^2 \backslash \{0\} \xrightarrow{\tilde{\eta}} \mathds{P}^1 \cong S^{2,1} \]
\[(a,b) \to [a:b]\]
As it is a map between spheres, after stabilizing, we get a map of the sphere spectrum
\[\eta : \Sigma^{1,1} \mathds{1} \to \mathds{1}\]
This map $\eta$ is called the first stable Hopf map. Taking real points, it corresponds to $(-2) \in \pi_0(\mathds{S}) \cong \Z$ \cite[Proposition 3.1.3]{etaRealRealization}. Here $\mathds{S}$ is the topological sphere spectrum. 
\end{remark}
\begin{remark}\label{etaMGL}
    We have the cofiber sequence 
    \[ \A^2 \backslash \{0\} \xrightarrow{\tilde{\eta}} \mathds{P}^1 \to \mathds{P}^2\]
    Therefore $\cofib(\eta) =: \mathds{1}/\eta \cong \Sigma^{-2,-1} \Sigma^{\infty} \mathds{P}^2$. Here $\A^2 \backslash \{0\}$ can be considered to be pointed at $(1,0)$, and $\mathds{P}^1, \mathds{P}^2$ are taken to be pointed at $[1:0] := \infty$. The unit map of $\MGL$, at the first level is just the inclusion $\mathds{P}^1 \hookrightarrow \MGL_1 \cong \mathds{P}^{\infty}$. Therefore, we get a canonical factorization 
    \[\Sigma^{-2,-1} \Sigma^{\infty} \mathds{P}^1 \to \Sigma^{-2,-1} \Sigma^{\infty} \cofib(\tilde{\eta}) \to \MGL\] 
    In other words, the unit map $\mathds{1} \to \MGL$ has the following factorization
\[\begin{tikzcd}
	{\mathds{1}} & {} & \MGL \\
	& {\mathds{1}/\eta}
	\arrow[from=1-1, to=1-3]
	\arrow[from=1-1, to=2-2]
	\arrow[from=2-2, to=1-3]
\end{tikzcd}\]
\end{remark}
This map $\mathds{1}/\eta \to \MGL$ is determined by the maps \[\Sigma^{2r-2, r-1} \cofib(\tilde{\eta}) \to \MGL_r\] Following \cite[Lemma 3.7]{Hoyoismalgmot}, one can show that 
\[(\Sigma^{\infty}\Sigma^{2r-2, r-1} \cofib(\tilde{\eta}))_{\leq r} \xrightarrow{\cong} (\Sigma^{\infty}{\MGL_r})_{\leq r}\]
So, we get that
\[(\mathds{1}/\eta)_{\leq 0} \xrightarrow{\cong} (\MGL)_{\leq 0}\]
In particular, as $\mathds{1}/\eta$ is connected, we have 
\[\underline{\pi}_0(\mathds{1}/\eta) \cong \underline{\pi}_0(\MGL)\] 
\subsection{Slice filtration}
In classical stable homotopy theory Postnikov $t$-structure, and its associated filtration give rise to Atiyah-Hirzebruch spectral sequence \cite{Maunder_1963}. The question of constructing a similar picture in the motivic realm led to the introduction of the slice filtration. The filtration induced by the homotopy $t$-structure was not a good candidate, as it does not take into account suspension with respect to $\Gm$.
 Voevodsky constructed slice filtration in the stable motivic homotopy category $\SH(S)$ \cite{open_problems}.\\[1.0ex]
 Let $\SH^{\text{eff}}(S)$ be the smallest colimit closed triangulated subcategory of $\SH(S)$ containing $\{\Sigma^{\infty} X_+ : X \in \SmS_S\}$. We get the following filtration
 \[... \hookrightarrow \Sigma^{q+1}_{\mathds{P}^1} \SH^{\text{eff}}(S) \hookrightarrow  \Sigma^{q}_{\mathds{P}^1} \SH^{\text{eff}}(S) \hookrightarrow... \hookrightarrow \SH(S)\]
 The inclusions $i_q : \Sigma^{q}_{\mathds{P}^1} \SH^{\text{eff}}(S) \hookrightarrow \SH(S) $ preserve colimits. Therefore using \cite[Corollary 5.5.2.9]{HTT}, they have right adjoints $r_q$. Let's define $f_q:= i_q \circ r_q$.\\[1.0ex]
 The unit of the adjunction $\textit{Id} \xrightarrow{\cong} r_q \circ i_q$ is an isomorphism. The counit $i_{q+1} \circ r_{q+1} \to \textit{Id}$, applied on $f_q$ gives a map 
 \begin{align*}
     f_{q+1} \circ f_q \to f_q\\
     f_{q+1} \to f_{q}
 \end{align*}
 For any $E \in \SH(S)$, there is a decreasing filtration
 \[... \to f_{q+1}E \to f_{q}E \to f_{q-1}E \to ....\]
We call $f_q E$, the $q$-th effective cover of $E$. The associated graded of this filtration\\ $\cofib (f_{q+1}E \to  f_{q}E)=:s_{q}E$ is called the \textbf{$q$-th slice} of $E$. We will use the following basic properties of slice functor $s_q$ \cite[Section 2]{slicesofhermitianktheory}.\begin{itemize} \label{slices} 
    \item $s_q$ is triangulated
    \item $s_q(\Sigma^{2,1} E) \cong \Sigma^{2,1} s_{q-1} (E)$
    \item for $E \in \Sigma^{q+1}_T \SH(S)^{\eff}$, $s_q(E) \cong \ast$ in $\SH(k)$
    \item Following \cite{coloredOperads}, there are natural maps 
    \[s_q E \wedge s_t F \to s_{q+t} (E \wedge F)\]
    In other words, the \say{full slice functor} $s_* : \SH(S) \to \SH(S)^{\Z}$ is lax symmetric monoidal. In particular, $s_0$ preserves ring structure. 
\end{itemize} 
The slices of any spectrum $E$ are modules over the zero slice of the sphere spectrum $\mathds{1}$. Over any field $s_0(\mathds{1}) \cong \HZ$ \cite{coniveau}. Bachmann-Hoyois showed that $s_0(\mathds{1}) \cong \HZ$ in $\SH(S)$, where $S$ is essentially smooth over a Dedekind domain \cite[Theorem B.4]{normsinmotivichomotopy}. In this case $\HZ$ denotes Spitzweck's motivic cohomology \cite{SpitzweckMotCoh}.
\begin{remark}\label{SlicesMGL}
  Changing the geometric argument of \cref{etaMGL} a bit, or following \cite[Theorem 3.1]{slices_MGL}, one can show that the cofiber of the unit map $\mathds{1} \to \MGL$ lies in $\Sigma^{2,1}\SH(S)^{\text{eff}}$. So, we get 
  \[s_0(\MGL) \cong s_0(\mathds{1}) \cong \HZ\]
  Spitzweck computed all slices of $\MGL$ \cite{slices_MGL}, if Hoyois-Hopkins-Morel isomorphism \cite[Theorem 7.12]{Hoyoismalgmot} holds. The following holds over fields of exponential characteristic $e$, after inverting $e$.
  \[s_q(\MGL) \cong \Sigma^{2q,q} \HZ \otimes \text{MU}_{2q}\]
  Following \cite[Lemma 3.24]{annals_stable} the cofiber of the map $\mathds{1}/\eta \to \MGL$ lies in $\Sigma^{4,2} \SH(S)^{\text{eff}}$. In particular, $s_1(\mathds{1}/\eta) \cong s_1(\MGL)$.
\end{remark}
\subsection{Orientation} \label{orientationSection}
In $\A^1$-homotopy theory, different notions of orientations of cohomology theories are quite well studied. Here we will recall some basic facts about oriented and special linear oriented theories primarily following \cite{PaninOriented2}, \cite{PW}, and \cite{ananyevskiy2019sloriented}. The content of this section is standard. We have included this for the sake of being notationally coherent. 
\subsubsection{Oriented cohomologies}
Panin and Smirnov \cite{PaninOriented2}, while discussing oriented cohomology theories of varieties did not only consider the theories representable in the stable motivic homotopy category $\SH(k)$. They defined cohomology theories for smooth varieties using Eilenberg-Steenrod like axioms.\\[1.01ex] 
Let $\textit{SmOp}$ denote the category of smooth pairs $(X,U)$, where $X$ is a smooth scheme over a fixed field $k$, and $U \hookrightarrow X$ is an open immersion. In this sense a cohomology theory is a contravariant functor $E: \textit{SmOp} \to \textit{Ab}$, and a functorial boundary map $\partial : E(U) \to E(X,U)$ having suitable localization, excision, and homotopy invariance properties. \cite[Definition 1.1]{PaninOriented2}. Any bigraded theory representable in $\SH(k)$ is a cohomology theory in the sense of Panin-Smirnov. As for this paper, we are only interested in such theories, so we will refrain from discussing non-representable theories.\\[1.01ex] 
Let $E$ be a commutative monoid in $(\SH(S), \wedge, \mathds{1})$. For any smooth pair $(X,U)$, with open immersion $U \xhookrightarrow{i} X$, $E^{*,*}(X,U)$ is defined to be $E^{*,*}(Th(\mathcal{N}_i))$.
\begin{definition}\label{orientation}
    Let $X \in \SmS$. A theory $E$ is said to have an \textbf{orientation} if for every rank $n$ vector bundle $p: V \to X$, there exists an element \\$\thom \in E^{2n,n}(Th(V))$, that satisfies the following properties. 
    \begin{itemize}
        \item For a vector bundle isomorphism $\phi: V \to V'$, we get 
        \[\thom (V) = \phi^E(\thom(V'))\]
        \item For a morphism $f: Y \to X$ of smooth varieties,
        \[f^E(\thom(V)) = \thom(f^*(V))\]
        \item We have \textbf{Thom isomorphism}. In other words, the map $E^{*,*}(X) \to E^{*+2n,*+n}(Th(V))$, taking $a \to p^*(a) \cup \thom(V)$ is an $E^{*,*}(X)$ module isomorphism.
        \item Let $q_i : V_1 \oplus V_2 \to V_i$ denote the projection map for $i \in \{1,2\}$. We have the following multiplicative property of Thom classes 
        \[{q_1}^*\bigl(\thom(V_1)\bigr) \cup {q_2}^*\bigl(\thom(V_2)\bigr) = \thom (V_1 \oplus V_2) \in E^{*,*}\bigl(Th(V_1 \oplus V_2)\bigr)\]
        Here we have actually considered the maps induced by $q_i$'s on Thom spaces.
        \item The unit of $E$ gives rise to an element $1 \in E^{0,0}(\text{spec}(S)_{+})$. Applying $T$-suspension on this, we get an element $\Sigma_T (1) \in E^{2,1} (T)$. For the trivial line bundle $\mathcal{O}_X$, we have $\thom(\mathcal{O}_X) = \Sigma_T (1)$.
    \end{itemize}
\end{definition}
\begin{definition}\label{chernClass}
    A theory $E$ is endowed with a theory of \textbf{Chern classes} for line bundles, if for every $X \in \SmS$, and any line bundle $L \to X$, there is an element $c(L) \in E^{2,1}(X)$ with the following properties
    \begin{itemize}
        \item \textbf{functoriality} For isomorphic line bundles $L_1$, and $L_2$; $c(L_1) = c(L_2)$. For any morphism $f: Y \to X$, $f^E(c(L)) = c(f^*(L))$.
        \item \textbf{nondegeneracy} The following map is an isomorphism      \[(1, c(\mathcal{O}(-1))): E^{*,*}(X) \oplus E^{*,*}(X) \to E^{*+2,*+1}(X \times \mathds{P}^1)\]
        As usual, $\mathcal{O}(-1)$ denotes the tautological line bundle on $\mathds{P}^1$. 
        \item \textbf{vanishing} The Chern class of the trivial line bundle vanishes.
        \[c(\mathcal{O}_X) = 0\]
    \end{itemize}
\end{definition}
\begin{definition} \label{thomLineBundle}
    A theory $E$ is endowed with a theory of \textbf{Thom classes} for line bundles, if for every $X \in \SmS$, and any line bundle $L \to X$, there is an element $\thom (L) \in E^{2,1}(Th(L))$, satisfying the following properties
    \begin{itemize}
        \item For any isomorphism of line bundles $\phi: L_1 \to L_2$, $\phi^E(\thom (L_2)) = \thom (L_1)$.
        \item For a scheme morphism $f: Y \to X$, $f^E(\thom (L)) = \thom (f^*(L))$.
        \item Let's consider the line bundle $p:\A^1 \times X \to X$. The following map is an isomorphism.
        \[p^*(-) \cup \thom (\mathcal{O}_X) : E^{*,*}(X) \to E^{*+2, *+1}(Th(X \times \mathds{A}^1)\]
        
    \end{itemize}
\end{definition}
\begin{remark} \label{ChernThomLineEqui}
    As described in \cite{PaninOriented2}, there is a one-to-one correspondence between Chern and Thom classes of line bundles. 
    Let a theory $E$ be endowed with Thom classes $\thom(L)$ for line bundles $L \to X$. We denote the zero section by $z: X \to L$, and the complement of the zero section of $L$ by $L^0$.\\[1.000ex]
    Consider the map (in $\textit{SmOp}$), $i: (L,\emptyset) \to (L,L^0)$. We know $E^{*,*}(Th(L)) := E^{*,*}(L,L^0) := E^{*,*}_X(L)$, where $E^{*,*}_X(L)$ is $E$-cohomology on $L$ with support on $X$. We denote the support extension map $E^{*,*}_X(L) \to E^{*,*}(L)$ by $i^E$.\\[1.000ex]
    We can define the corresponding Chern class as
    \[c(L) = z^E\biggl(\Bigl(i^E\bigl(\thom(L)\bigr)\Bigr)\biggr)\]
    Now, let's assume that the theory $E$ is endowed with Chern classes $c(L)$ for line bundles $L \to X$. Consider the vector bundle $V = L \oplus \mathds{1}$. Let $\mathds{P}(V)$ denote the space of lines in the vector bundle $V$. We denote $\mathds{P}(V) - \mathds{P}(\mathds{1})$ by $U$. We have the natural projection $p: \mathds{P}(V) \to X$, and inclusion $i: (L, L^0) \hookrightarrow (\mathds{P}(V), U)$. We set the corresponding Thom class of $L$ to be
    \[\thom(L) = i^E(c(\mathcal{O}_{\mathds{P}(V)}(1) \otimes p^*(L)))\]
    These two constructions are inverses to each other \cite[1.2.2]{PaninOriented2}.
    \end{remark}
  \begin{theorem}[Projective bundle theorem] \label{pbt} \cite[Theorem 3.9]{PaninOriented2}
    Let $E$ be a theory endowed with Chern classes for line bundles. Let $V \to X$ be a rank $n$ vector bundle on some $X \in \SmS_S$. Let $\xi = c(\mathcal{O}_{\mathds{P}(V)}(-1)) \in E^{*,*}(\mathds{P}(V))$. Then $E^{*,*}(\mathds{P}(V))$ is a finite rank $E^{*,*}(X)$ module with basis $\{1, \xi, \xi^2,..., \xi^{n-1}\}$. The following map is an isomorphism
    \[(1,\xi,\xi^2,..., \xi^{n-1}): E^{*,*}(X) \oplus E^{*,*}(X) \oplus... \oplus E^{*,*}(X) \to E^{*,*}(\mathds{P}(V))\]
    For a trivial rank $n$ vector bundle $V$, $\xi^n = 0$. This works if we take $\xi$ to be $c(\mathcal{O}_{\mathds{V}}(1)) \in E^{*,*}(\mathds{P}(V))$ as well.
    
\end{theorem}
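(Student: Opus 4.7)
My plan is to argue in two stages: first establish the formula for a trivial bundle $V = \mathcal{O}_X^n$ by induction on $n$, and then bootstrap to an arbitrary $V$ via Zariski Mayer--Vietoris, exploiting that a Noetherian base admits a \emph{finite} open cover trivializing any vector bundle. The base case $n=1$ is tautological since $\mathds{P}(V) \cong X$ and $\{1\}$ is a basis of $E^{\ast,\ast}(X)$ over itself; the case $n=2$ for a trivial bundle is precisely the nondegeneracy clause of \cref{chernClass}.

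For the inductive step on trivial bundles I would use the standard motivic identification $\mathds{P}^{n-1}/\mathds{P}^{n-2} \cong T^{n-1}$, which follows by excision applied to the cover of $\mathds{P}^{n-1}$ by $\mathds{A}^{n-1}$ and the complement of a point on $\mathds{P}^{n-2}$. Smashing with $X_+$ produces a cofiber sequence
\[(X \times \mathds{P}^{n-2})_+ \longrightarrow (X \times \mathds{P}^{n-1})_+ \longrightarrow X_+ \wedge T^{n-1},\]
whose right term has $E$-cohomology $E^{\ast-2(n-1),\ast-(n-1)}(X)$ by Tate suspension. The induction hypothesis identifies the left term as a free module with basis $\{1,\xi,\ldots,\xi^{n-2}\}$. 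A direct computation, using the multiplicative property of Thom/Chern classes together with the correspondence in \cref{ChernThomLineEqui}, shows that the top power $\xi^{n-1}$ lifts the canonical generator of the suspension term; this forces the long exact sequence to break into a split short exact sequence whose middle term acquires the basis $\{1,\xi,\ldots,\xi^{n-1}\}$.

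For a general $V$ of rank $n$ on Noetherian $X$, I would choose a finite Zariski open cover $\{U_1,\ldots,U_k\}$ trivializing $V$ and induct on $k$. Since $\xi = c(\mathcal{O}_{\mathds{P}(V)}(-1))$ is defined globally on $\mathds{P}(V)$, the bases produced by the trivial case over each $\mathds{P}(V|_{U_i})$ and over each $\mathds{P}(V|_{U_i \cap U_j})$ all come from restricting the \emph{same} global classes, so compatibility across the cover is automatic. The Mayer--Vietoris long exact sequence attached to $\mathds{P}(V|_{U_i \cup U_j}) = \mathds{P}(V|_{U_i}) \cup \mathds{P}(V|_{U_j})$, combined with a five-lemma argument on the resulting direct sum of shifted copies of $E^{\ast,\ast}$, promotes the isomorphism from each piece to the whole.

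The main obstacle is the splitting step in the trivial-bundle induction: verifying that $\xi^{n-1} \in E^{2(n-1),n-1}(X \times \mathds{P}^{n-1})$ restricts to a generator of $E^{\ast,\ast}(X_+ \wedge T^{n-1})$. The cleanest route is to unwind \cref{ChernThomLineEqui} to rewrite each factor of $\xi$ as (a restriction of) the Thom class of $\mathcal{O}(-1)$ on the relevant projective completion, and then to apply the multiplicativity axiom of \cref{orientation}, which converts a product of Thom classes into the Thom class of a direct sum. Once this identification is in place, the rest of the argument is a formal chase through the long exact sequences.
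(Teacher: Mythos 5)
The paper does not prove this theorem itself; it only cites it from Panin--Smirnov, so there is no internal argument to compare your proposal against. Your two-stage strategy (trivial bundle by induction on rank, then arbitrary bundle by Mayer--Vietoris over a finite trivializing cover) is the standard one and coincides with the approach in the cited source and in other treatments. That said, the sketch as written has two genuine gaps.

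First, the vanishing $\xi^n = 0$ for a trivial rank-$n$ bundle is both a clause of the conclusion and an input to your inductive step: to conclude that $\xi^{n-1} \in E^{*,*}(X \times \mathds{P}^{n-1})$ lies in the image of $\delta$, you need to know that $\xi^{n-1}$ restricts to zero on $X \times \mathds{P}^{n-2}$, which is exactly the vanishing clause one rank down. Your argument establishes freeness but never addresses this vanishing, and it does not follow from freeness alone (after restricting to $\mathds{P}^{n-2}$ one only learns $\xi^n = a\,\xi^{n-1}$ for some coefficient $a$). The standard repair is a support argument: $\mathcal{O}(-1)$ trivializes on each of the $n$ coordinate affine opens $U_i \cong \mathds{A}^{n-1}$ covering $\mathds{P}^{n-1}$, so $\xi$ lifts to each relative group $E^{*,*}(\mathds{P}^{n-1}, U_i)$, and the product $\xi^n$ then lifts to $E^{*,*}\bigl(\mathds{P}^{n-1}, \bigcup_i U_i\bigr) = E^{*,*}(\mathds{P}^{n-1}, \mathds{P}^{n-1}) = 0$. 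You should incorporate this as part of the inductive statement.

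Second, you propose to resolve the splitting step by invoking ``the multiplicativity axiom of \cref{orientation}.'' But \cref{orientation} is the axiom schema for a full orientation (Thom classes for all vector bundles), whereas the hypothesis of \cref{pbt} only grants Chern classes for line bundles. Citing that axiom as given risks circularity, since \cref{orientationEquivalence} -- which produces Thom classes for higher-rank bundles from Chern classes of line bundles -- already presupposes \cref{pbt}. What you actually have available is the Thom isomorphism for \emph{line} bundles (\cref{thomLineBundle}, \cref{ChernThomLineEqui}). Since you only need the Thom class of a trivial bundle (the normal bundle of a point in $\mathds{P}^{n-1}$), you can build it by iterating the rank-one Thom isomorphism for $\mathcal{O}_X$, which requires no circular input. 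With these two patches the argument goes through.
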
  
\begin{remark}\label{orientationEquivalence}
 Using \cref{pbt}, any theory with Chern classes for line bundles can be endowed with (higher) Chern classes for all vector bundles \cite[Theorem 3.27]{PaninOriented2}. Using the notations from \cref{pbt}, there are unique elements \\$c_i(V) \in E^{2i,i}(X)$ such that 
    \[\xi^n - c_1(V) \xi^{n-1} +...+ (-1)^{n} c_n (V) = 0\]
    We set $c_0(V) = 1$, and $c_m(V) = 0$ for $m > n$. In this way, we obtain Chern classes $c_i(V)$, for any rank $n$ vector bundle $V \to X$, over any $X \in \SmS$, with the following properties
    \begin{itemize}
        \item We have $c_0(L) = 1$, and $c_1(L) = c(L)$, for any line bundle $L$.
        \item For isomorphic vector bundles $V$ and $V'$, $c_i(V) = c_i(V')$.\\
        For each morphism $f: Y \to X$ of smooth varieties,
        \[f^E(c_i(V)) = c_i(f^*(V))\]
        \item For any short exact sequence of vector bundles
        $0 \to V_1 \to V \to V_2 \to 0$, we have the following formula known as Cartan formula
        \[c_i(V) = c_i(V_2) + c_1 (V_1) \cup c_{i-1} (V_2) +...+ c_{i-1} (V_1) \cup c_{1} (V_2) + c_i(V_1) \]
        \item $c_i(V)$ is nilpotent for $i \geq 1$ by construction.
    \end{itemize}
We have already seen a one-to-one correspondence between Chern and Thom classes of line bundles. If a theory $E$ has orientation, that is Thom classes for each vector bundle, of course, it is endowed with Thom classes for line bundles. Given Chern classes for line bundles, we can construct Thom classes for every rank $n$ vector bundle $V \to X$ \cite[Theorem 3.35]{PaninOriented2}.
\end{remark}
\begin{remark}
    Let $E$ be a commutative monoid in $\SH(S)$. We can orient $E$ with very little data \cite[Definition 1.2.1, Section 2]{PPR}.\\[1.000ex] 
    Consider the space $\mathds{P}^{\infty} := \ncolim_n \mathds{P}^{n}$, pointed at $g: \mathds{P}^{0} \hookrightarrow \mathds{P}^{\infty}$. We denote $\Taut(1) := \mathcal{O}_{\mathds{P}^{\infty}}(-1) := \ncolim_n \mathcal{O}_{\mathds{P}^{n}}(-1)$. Its fiber over the point $g \in \mathds{P}^{\infty}$ is $\A^1$. We denote its zero section by $z : \mathds{P}^{\infty} \hookrightarrow \Taut(1)$\\[1.000ex]
     A Thom orientation of  $E$ can be thought of as a choice of an element\\ $th \in E^{2,1}(Th(\Taut(1))$, such that its restriction over the distinguished point $g$ is $\Sigma_T 1 \in E^{2,1}(T)$. Similarly, a Chern orientation of  $E$ can be thought of as a choice of an element $c \in E^{2,1}(\mathds{P}^{\infty})$, such that its restriction over $\mathds{P}^{1}$ is $\Sigma_{\mathds{P}^{1}} 1$. A Chern and Thom orientation correspond to each other if $c = z^*(th)$.\\[1.01ex]
     Let $S$ be the spectrum of a field. In the unstable $\A^1$-homotopy category $\mathcal{H}(S)_{*}$, the Picard group fuctor for smooth schemes $\textit{Pic}$ is represented by $\mathds{P}^{\infty}$ \cite[Corollary 4.2]{motcoh_voevodsky_weibel_mazza}. There is a functor isomorphism\\$\hom_{\Spc(S)_{\bullet}}(-, \mathds{P}^{\infty}) \to \textit{Pic}(-)$, sending $id: \mathds{P}^{\infty} \to \mathds{P}^{\infty}$ to the class of $\Taut(1)$. For a line bundle $L \to X$, let's consider the map $f: X \to \mathds{P}^{\infty}$ in $\mathcal{H}(S)_{*}$, corresponding to the class of $L$ in $\textit{Pic}(X)$. We set $c(L) = f^*(c) \in E^{2,1}(X)$. In this way, we get an orientation on $E$.    
\end{remark}
\begin{remark} \label{orientingMGL}
   The zero section map $z: \mathds{P}^{\infty} = \text{BGL}_1 \to \Taut(1) = \text{MGL}_1$ induces a weak equivalence \cite[Lemma 3.2]{vezzosi_BP}. The normal bundle of the closed immersion $\mathds{P}^{n} \hookrightarrow \mathds{P}^{n+1}$ is the canonical line bundle $\mathcal{O}_{\mathds{P}^{n}}(-1)$, and $\mathds{P}^{n+1} - \mathds{P}^{n} \cong \A^{n+1}$. In particular, we have $Th(\mathcal{O}_{\mathds{P}^{n}}(-1)) \cong \mathds{P}^{n+1}$. Passing to the colimit we are done.\\[1.000ex]
   So, we have a canonical map $\varphi: \Sigma^{-2,-1} \mathds{P}^{\infty} \to \MGL$. The corresponding element $\varphi \in \MGL^{2,1}(\mathds{P}^{\infty}) \cong \MGL^{2,1}\Bigl(Th\bigl(\Taut(1)\bigr)\Bigr)$ can be taken as a choice of a Thom element.
\end{remark}
\begin{lemma}\label{gysin} \cite{Gysinnenashev} \cite{PaninOriented2}
    Let $i: Y \hookrightarrow X$ be a closed immersion of codimension $d$, and $E$ be an oriented theory. There is a push forward map \[i_*: E^{*,*} (Y) \xrightarrow{\thom_{\mathcal{N}_i}} E^{*+2d,*+d} (Th(\mathcal{N}_i)) := E^{*+2d,*+d}_{Y} (X) \xrightarrow{\tilde{i}^E} E^{*+2d,*+d} (X)\]
    Here $\tilde{i}^E$ is the support extension map. The composition $i_*$ is known as the Gysin map.
\end{lemma}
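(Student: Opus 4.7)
The statement is really the construction of the Gysin map rather than a substantive theorem, so the plan is to assemble the pushforward out of two pieces that are already available: the Thom isomorphism coming from the orientation on $E$, and homotopy purity identifying the Thom space of the normal bundle with the cone of the open complement inclusion. I would not attempt to reprove these ingredients; they are in Panin–Smirnov and Morel–Voevodsky, respectively, and are the content of the cited references \cite{Gysinnenashev} and \cite{PaninOriented2}. The goal is just to put them together and record the result.

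First I would recall the homotopy purity theorem \cite[Theorem 2.23, p.115]{MV99}: for the closed immersion $i : Y \hookrightarrow X$ of codimension $d$ between smooth schemes, there is a canonical $\A^1$-equivalence $X/(X \setminus Y) \xrightarrow{\cong} \Th(\mathcal{N}_i)$ in $\mathcal{H}(S)_*$. Applying the representable functor $E^{*,*}$, this gives the identification
\[
E^{*+2d,*+d}_{Y}(X) := E^{*+2d,*+d}(X/(X\setminus Y)) \cong E^{*+2d,*+d}(\Th(\mathcal{N}_i)),
\]
which is exactly the target of the displayed Thom isomorphism in the statement.

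Next I would invoke the orientation data on $E$ from \cref{orientation}: since $E$ is oriented, the normal bundle $\mathcal{N}_i \to Y$, being a rank $d$ vector bundle over a smooth scheme, carries a Thom class $\thom(\mathcal{N}_i) \in E^{2d,d}(\Th(\mathcal{N}_i))$, and the resulting map
\[
\thom_{\mathcal{N}_i} : E^{*,*}(Y) \longrightarrow E^{*+2d,*+d}(\Th(\mathcal{N}_i)), \qquad a \longmapsto p^*(a) \cup \thom(\mathcal{N}_i),
\]
is an isomorphism of $E^{*,*}(Y)$-modules (with $p: \mathcal{N}_i \to Y$ the bundle projection, which is an $\A^1$-equivalence so $p^*$ is well defined on cohomology).

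Finally, the quotient map $X_+ \to X/(X\setminus Y)$ induces the support extension map $\tilde{i}^E : E^{*+2d,*+d}_Y(X) \to E^{*+2d,*+d}(X)$. The Gysin pushforward is then simply the composite
\[
i_* \;:\; E^{*,*}(Y) \xrightarrow{\;\thom_{\mathcal{N}_i}\;} E^{*+2d,*+d}(\Th(\mathcal{N}_i)) \cong E^{*+2d,*+d}_Y(X) \xrightarrow{\;\tilde{i}^E\;} E^{*+2d,*+d}(X),
\]
which is the map asserted in the lemma. The only subtle point — and the one I would expect a reader to worry about — is that the Thom isomorphism and the purity equivalence are canonical only up to the chosen orientation, so the map $i_*$ depends on the orientation data, not just on $i$ and $E$; for the properties (functoriality in $i$, projection formula, base change) one defers to \cite{PaninOriented2} and \cite{Gysinnenashev}, which is why the lemma is stated as a citation.
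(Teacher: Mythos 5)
The paper gives no proof of this lemma at all — it is stated purely as a citation to Nenashev and Panin--Smirnov — and your proposal is a correct and complete unpacking of the standard construction those references carry out: homotopy purity identifying $X/(X\setminus Y)$ with $\Th(\mathcal{N}_i)$, the Thom isomorphism supplied by the orientation, and the support extension map induced by the quotient $X_+ \to X/(X\setminus Y)$. Your closing remark that $i_*$ depends on the chosen orientation data, with the functoriality, projection formula, and base-change properties deferred to the cited sources, is exactly the right thing to flag.
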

\begin{lemma} \label{OrientedCohomologyOfGrassmannian} \cite[Theorem 2.0.7]{PPR}, \cite[Proposition 3.4]{vezzosi_BP}
 Let $E$ be an oriented theory. We denote the infinite Grassmannian of $n$-planes, as described in \cref{constructMGL}, by $\BGL_n$. Then, we have a description of the $E^{*,*}$-cohomology ring of $\BGL_n$ as the following power series ring.
 \[E^{*,*}(\BGL_n) \cong E^{*,*}(S) \llbracket c_1, c_2,...,c_n \rrbracket\]
 Here $c_i$ denotes the $i$-th Chern class of the tautological $n$-bundle $\mathcal{T}_n$ with respect to the theory $E$.\\[1.01ex]
 The map $i_n^*$, induced by the inclusion $i_n: \BGL_n \hookrightarrow \BGL_{n+1}$ satisfies \\$i_n^*(c_m) = c_m$ for $m \leq n$, and $i_n^*(c_{n+1}) = 0$. 
\end{lemma}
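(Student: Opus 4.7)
The plan is to compute $E^{*,*}(\Gr_{n,np})$ at each finite stage $p$ via the splitting principle and then pass to the inverse limit as $p \to \infty$. First, I construct the complete flag bundle $\pi \colon \mathrm{Fl}(\mathcal{T}_{n,np}) \to \Gr_{n,np}$ as an iterated projectivization of the successive quotients of the tautological bundle. Iterated application of the projective bundle theorem (\cref{pbt}) both computes $E^{*,*}(\mathrm{Fl}(\mathcal{T}_{n,np}))$ as a free $E^{*,*}(\Gr_{n,np})$-module of rank $n!$ and exhibits $\pi^*\mathcal{T}_{n,np}$ as a direct sum of line bundles $L_1 \oplus \cdots \oplus L_n$ with Chern roots $x_i := c(L_i)$. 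The Whitney sum formula (\cref{orientationEquivalence}) identifies $\pi^* c_i(\mathcal{T}_{n,np})$ with the elementary symmetric polynomial $e_i(x_1, \ldots, x_n)$, so $\pi^*$ is a split injection whose image contains all symmetric polynomials in the Chern roots.

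Simultaneously, $\mathrm{Fl}(\mathcal{T}_{n,np})$ is a classical complete flag variety and hence an iterated projective bundle directly over $S$, so iterated \cref{pbt} from $S$ computes its $E$-cohomology as a finite free $E^{*,*}(S)$-module with an explicit basis in the $x_i$. Combining this with the Schubert cell decomposition of $\Gr_{n,np}$ into affine spaces (which forces $E^{*,*}(\Gr_{n,np})$ to be a free $E^{*,*}(S)$-module of rank $\binom{np}{n}$), a rank count identifies $E^{*,*}(\Gr_{n,np})$ with the $S_n$-invariant subring of $E^{*,*}(\mathrm{Fl}(\mathcal{T}_{n,np}))$. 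By the fundamental theorem of symmetric functions this subring is generated over $E^{*,*}(S)$ by $c_1, \ldots, c_n$, modulo the finitely many relations coming from $c(\mathcal{T}_{n,np}) \cdot c(\mathcal{Q}) = 1$, where $\mathcal{Q}$ is the rank $np - n$ tautological quotient bundle.

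Passing to the inverse limit is now clean. The restriction maps $E^{*,*}(\Gr_{n, n(p+1)}) \to E^{*,*}(\Gr_{n, np})$ are surjective (they send Chern classes to Chern classes), so Mittag--Leffler holds, $\lim{}^1 = 0$, and the Milnor sequence gives $E^{*,*}(\BGL_n) \cong \lim_p E^{*,*}(\Gr_{n,np})$. As $p \to \infty$ the relations $c_i(\mathcal{Q}) = 0$ are pushed to arbitrarily high degree and disappear in the limit, while the $c_i$ have strictly positive bidegree $(2i, i)$ so formal sums of monomials of unbounded total degree are allowed; the outcome is precisely the power series ring $E^{*,*}(S)\llbracket c_1, \ldots, c_n \rrbracket$.

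The second assertion is then immediate from naturality: by construction (\cref{constructMGL}) the inclusion $i_n$ satisfies $i_n^* \mathcal{T}_{n+1} \cong \mathcal{T}_n \oplus \mathcal{O}$, so the Whitney sum formula gives $i_n^* c(\mathcal{T}_{n+1}) = c(\mathcal{T}_n) \cdot c(\mathcal{O}) = c(\mathcal{T}_n)$, yielding $i_n^*(c_m) = c_m$ for $m \leq n$ and $i_n^*(c_{n+1}) = 0$. The main obstacle I anticipate is the rank-counting step: over a general bigraded coefficient ring $E^{*,*}(S)$, identifying the $S_n$-invariant subring with the polynomial algebra in Chern classes requires the a priori finite freeness coming from the Schubert stratification, without which one cannot rule out hidden algebraic relations among the $c_i$ arising from the coefficient ring.
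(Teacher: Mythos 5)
The paper gives no proof of this lemma; it cites \cite[Theorem 2.0.7]{PPR} and \cite[Proposition 3.4]{vezzosi_BP}, and your plan (flag bundles, iterated projective bundle theorem, passage to the inverse limit) is the standard route those references follow, so the overall strategy is aligned with the literature.

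There is, however, a genuine gap at the step ``a rank count identifies $E^{*,*}(\Gr_{n,np})$ with the $S_n$-invariant subring of $E^{*,*}(\mathrm{Fl}(\mathcal{T}_{n,np}))$.'' Knowing that $E^{*,*}(\Gr_{n,np})$ is free of rank $\binom{np}{n}$ and that $\pi^*$ is a split injection into the invariants tells you nothing about the rank of the invariant subring, and in fact the claimed identification fails once $E^{*,*}(S)$ has torsion. Concretely, for $n=2$ the Weyl involution sends the Chern root $x$ to $c_1 - x$ (since the two roots $x_1,x_2$ satisfy $x_1+x_2 = c_1$ by the Whitney formula), so $a + bx$ is fixed precisely when $2bx = b c_1$; if $2=0$ in $E^{*,*}(S)$ this becomes $b c_1 = 0$, and $c_1$ annihilates the top class of the finite Grassmannian $\Gr_{2,2p}$ for degree reasons, producing invariant elements not in $\pi^*\bigl(E^{*,*}(\Gr_{2,2p})\bigr)$. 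The subsequent appeal to the fundamental theorem of symmetric functions on the invariant subring therefore does not deliver generation by the $c_i$. You explicitly flagged this rank-counting step as your main worry, and it is indeed where the argument breaks: finite freeness does not close it, because taking $S_n$-invariants does not commute with passing to the quotient presenting $E^{*,*}(\mathrm{Fl})$. The intended conclusion at each finite stage should instead be obtained by exhibiting an explicit $E^{*,*}(S)$-basis inside the image of $\pi^*$, e.g.\ the Schur polynomials in the Chern roots indexed by partitions fitting in the $n \times (np-n)$ box (the Giambelli determinants in the $c_i$), after which the rank count you already have shows these form a basis of $E^{*,*}(\Gr_{n,np})$. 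The rest of your argument --- Mittag--Leffler from surjectivity of the restriction maps, the relations migrating to arbitrarily high bidegree to give the formal power series ring in the limit, and the Whitney-sum computation of $i_n^*$ --- is correct as written.
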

Just like $\MU$ in topology, $\MGL$ is the universal oriented cohomology theory in $\SH(S)$.
\begin{theorem} \label{uniMGL} \cite[theorem 2.3.1]{PPR}
    Let $S$ be a Noetherian base scheme of finite Krull dimension. For a (homotopy) commutative ring spectrum $E$ in $\SH(S)$, the set of orientations on $E$ is in bijective correspondence with ring spectrum map $\MGL \to E$ in $\SH(S)$. 
\end{theorem}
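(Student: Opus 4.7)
The plan is to set up the bijection in both directions, with the easy half being pullback of the universal orientation on $\MGL$ and the hard half being a term-by-term construction of a spectrum map out of the Thom classes provided by the orientation.

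For the forward direction, given a ring spectrum map $f\colon \MGL \to E$, take the canonical Thom class $\mathrm{th}_{\MGL} \in \MGL^{2,1}(Th(\Taut(1)))$ from \cref{orientingMGL} and define the orientation of $E$ by $f^*(\mathrm{th}_{\MGL})$. The properties listed in \cref{orientation} for this pulled-back class follow from the corresponding properties for $\mathrm{th}_{\MGL}$ together with the fact that $f$ is a map of ring spectra (so it respects cup products and preserves the unit, giving the normalization $\mathrm{th}(\mathcal{O}_X) = \Sigma_T 1$).

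For the reverse direction, suppose $E$ is oriented. The orientation produces Thom classes $\mathrm{th}(\Taut_{n,np}) \in E^{2n,n}(Th(\Taut_{n,np}))$ for each tautological bundle. These are compatible under the inclusions $\Gr(n,np) \hookrightarrow \Gr(n,n(p+1))$ by functoriality, so passing to the colimit yields $\mathrm{th}_n \in E^{2n,n}(\MGL_n)$, i.e.\ maps $\varphi_n\colon \MGL_n \to \Sigma^{2n,n} E$ in $\mathcal{H}(S)_*$. To assemble these into a map of symmetric $T$-spectra $\varphi\colon \MGL \to E$ one must check three things: (i) the $\varphi_n$ commute with the structure maps $T \wedge \MGL_n \to \MGL_{n+1}$, which follows from the multiplicative property $\mathrm{th}(\mathcal{O}_X \oplus \Taut_n) = \Sigma_T\mathrm{th}(\Taut_n)$ combined with the isomorphism $\Taut_{n+1}|_{\BGL_n} \cong \Taut_n \oplus \mathcal{O}$; (ii) the $\varphi_n$ are $\Sigma_n$-equivariant, which follows from functoriality of Thom classes under the $\GL_n$-action; (iii) the resulting $\varphi$ is a ring map, which reduces to checking $\mu_{nm}^* \mathrm{th}_{n+m} = p_1^* \mathrm{th}_n \cup p_2^* \mathrm{th}_m$, and this is exactly the multiplicativity axiom in \cref{orientation} applied to the Whitney sum over $\Gr(n,np) \times \Gr(m,mp)$. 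The unit axiom for $\varphi$ is precisely the normalization $\mathrm{th}(\mathcal{O}) = \Sigma_T 1$.

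To verify these two constructions are mutually inverse, one direction is immediate (composing orientation $\to$ map $\to$ orientation gives back the same Thom class by construction). For the other direction, given a ring spectrum map $f\colon \MGL \to E$, the map reconstructed from the orientation $f^*(\mathrm{th}_{\MGL})$ agrees with $f$ on each $\MGL_n$ because, by \cref{OrientedCohomologyOfGrassmannian} and the Thom isomorphism, $E^{2n,n}(\MGL_n)$ is detected by the restriction of the Thom class, so $f$ is determined by the image of $\mathrm{th}_{\MGL}$ (and the compatibility between levels is rigid). The main technical obstacle is controlling the passage from the levelwise maps to an actual map of spectra in $\SH(S)$; here one must check that the relevant $\lim^1$ term vanishes so that the compatible tower of classes $\{\mathrm{th}_n\}$ genuinely lifts to a map of spectra, and one must realize the symmetric monoidal coherence for the ring structure — this is where the symmetric $T$-spectrum presentation of $\MGL$ from \cref{constructMGL} is essential, as it reduces the coherence data to the $\Sigma_n$-equivariant compatibility of finitely many pieces.
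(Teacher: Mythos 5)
The paper itself gives no proof of this statement: it is cited from Panin--Pimenov--R\"ondigs \cite[Theorem 2.3.1]{PPR}, with the remark that their argument over a field carries over verbatim to a Noetherian base of finite Krull dimension. So there is no in-paper proof to compare against; your sketch should be judged against the standard PPR argument, and it does capture its main thrust: pull back the canonical Thom class for one direction, and for the other assemble levelwise classes $\mathrm{th}_n \in E^{2n,n}(\MGL_n)$ into a map of symmetric $T$-spectra, then verify compatibility with structure maps, $\Sigma_n$-equivariance, multiplicativity, and the unit normalization.

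Two points deserve tightening. First, your description of the role of $\lim^1$ is inverted: for $\MGL = \hocolim_n \Sigma^{-2n,-n}\Sigma^\infty\MGL_n$ the Milnor sequence
\begin{equation*}
  0 \longrightarrow {\lim}^1\, E^{2n-1,n}(\MGL_n) \longrightarrow [\MGL,E]_{\SH(S)} \longrightarrow \lim_n E^{2n,n}(\MGL_n) \longrightarrow 0
\end{equation*}
always lets a compatible tower lift to a spectrum map; the issue is uniqueness of that lift, which is exactly what the vanishing of $\lim^1$ (proved using the orientation via the projective bundle theorem and the surjectivity of the restriction maps $E^{*,*}(\BGL_{n+1}) \to E^{*,*}(\BGL_n)$) guarantees. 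Without that vanishing one gets a surjection, not a bijection, between orientations and ring maps. Second, the check that the two constructions are mutually inverse is quicker in your sketch than it really is. Starting from a map $f$ and passing through the orientation $f^*(\mathrm{th}_{\MGL})$ back to a map, you need that the Thom class of each $\Taut_n$ built from the line-bundle Thom class via the procedure in \cref{orientationEquivalence} equals $f_n$; this is not a formal consequence of \cref{OrientedCohomologyOfGrassmannian} but a nontrivial compatibility between the single universal Thom class and higher Thom classes, handled in PPR by showing the several notions of orientation (Thom classes for all bundles, Thom orientation of the universal line bundle, Chern orientation) are equivalent. Going the other way, that the orientation rebuilt from $\varphi$ returns the original Thom classes is not literally ``by construction'' either, for the same reason. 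With these two points filled in, your outline is the right argument.
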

In \cite{PPR}, \cref{uniMGL} is stated over fields. The proof works over general base schemes.
\begin{remark} \label{FormalGroupLaw}
Let $E$ be an oriented theory with an orientation $\omega$. We have $E^{*,*}(\mathds{P}^{\infty}) \cong E^{*,*}(S) \llbracket {c_1^{\omega}}\rrbracket$.\\
Let's consider the two projection maps $p_i: \mathds{P}^{\infty} \times \mathds{P}^{\infty} \to \mathds{P}^{\infty}$ for $i \in \{1,2\}$. We can identify $E^{*,*}(\mathds{P}^{\infty} \times \mathds{P}^{\infty})$ as \[E^{*,*}(S) \llbracket p_1^*(c_1^{\omega}), p_2^*(c_1^{\omega}) \rrbracket = E^{*,*}(S) \llbracket u_1, u_2 \rrbracket\]
We set \[F^{\omega} (u_1, u_2) = c^{\omega}(p_1^*(\mathcal{O}(-1)) \otimes p_2^*(\mathcal{O}(-1)) \in E^{*,*}(S) \llbracket u_1, u_2 \rrbracket\]
$F^{\omega}$ is a formal group law over $E^{*,*}(S)$. As the (bi)degree of the element $c_1^{\omega}$ is $(2,1)$, it is in fact a formal group law over the even subring $\oplus_{n} E^{2n,n}(S)$ of $E^{*,*}(S)$.\\[1.00000ex]
Let $L_1 \to X$, and $L_2 \to X$ be line bundles over some $X \in \SmS_S$. From \cite[Proposition 1.15]{PaninOriented2}, we have
\[c_1^{\omega}(L_1 \otimes L_2) = F^{\omega}(c_1^{\omega}(L_1), c_1^{\omega}(L_2))\]

\end{remark}
\subsubsection{Special linear and symplectic oriented cohomologies}
One well-studied example of a non-orientable theory is Balmer's derived Witt theory ($\text{W}$). In the last decade, quadratic refinements of the classical theories like Chow-Witt theory, Milnor-Witt motivic cohomology, and Hermitian $K$-theory were studied extensively. Projective bundle formula fails for them in general. They all have a weaker form of orientation, namely special linear orientation \cite{ananyevskiy2019sloriented}.
\begin{definition}
    A vector bundle $V$ over $X$ is called special linear if its determinant bundle is trivialized. That means, it comes with a choice of vector bundle isomorphism $\lambda : det(V) \xrightarrow{\cong} \mathcal{O}_X$ \cite[section 5]{PW}.\\[1.000ex]
    An isomorphism $\phi$ between special linear bundles $(V, \lambda)$, and $(V', \lambda')$ is a vector bundle isomorphism $\phi: V \to V'$, such that $\lambda'(det (\phi)) = \lambda$. In other words, the lower square in the following diagram commutes.
\[\begin{tikzcd}
	V && {V'} \\
	{det(V)} && {det(V')} \\
	{\mathcal{O}_X} && {\mathcal{O}_X}
	\arrow[from=1-1, to=2-1]
	\arrow["\phi", from=1-1, to=1-3]
	\arrow[from=1-3, to=2-3]
	\arrow["{det(\phi)}", from=2-1, to=2-3]
	\arrow["\lambda"', from=2-1, to=3-1]
	\arrow["{\lambda'}", from=2-3, to=3-3]
	\arrow["id"', from=3-1, to=3-3]
\end{tikzcd}\]
\end{definition}
\begin{definition}\label{SLorientation}
   Let's consider a ring cohomology theory represented by a $T$-spectrum $E$. It is said to have a normalized \textbf{special linear ($\SL$) orientation} if for every rank $n$ special linear vector bundle $p: V \to X$, there exists an element $\thom \in E^{2n,n}(Th(V))$, that satisfies the properties described in \cref{orientation}. 
\end{definition}
This class $\thom \in E^{2n,n}(Th(V))$, giving the Thom isomorphism, is called the Thom class of the $\text{SL}$-bundle $V$. Following the notation from \cref{ChernThomLineEqui}, the \textit{Euler class} of $V$ is defined as
\[e(L) = z^E\biggl(\Bigl(i^E\bigl(\thom(L)\bigr)\Bigr)\biggr) \in E^{2n,n}(X)\]
Panin-Walter \cite{PW} constructed, and proved the following universal property of the spectrum $\MSL$.
\begin{theorem}\cite[Theorem 5.5, Theorem 5.9]{PW}\label{MSLuniversal}
    Let the base scheme $S$ admit an ample family of line bundles. A commutative monoid morphism $\MSL \to E$ in $\SH(S)$ determines a natural $\SL$-orientation on $E$.\\[1.01ex] 
    Every $\SL$-orientation on $E$ determines a morphism $\MSL \to E$ that preserves the Thom classes for every special linear bundle. This morphism is unique modulo a particular subgroup of $\Hom_{\SH(S)}(\MSL,E)$.\\[1.0000ex]
    More precisely, let $E$ be given a (normalized) $\SL$-orientation with Thom classes $\thom^E(V,\lambda)$, for every special linear bundle $(V,\lambda)$ over some smooth scheme $X$. There exists a morphism $\phi: \MSL \to E$, such that 
    \[\phi (\thom^{\MSL}(V,\lambda)) = \thom^E(V,\lambda)\]
    There are some obstructions for $\phi$ to be a morphism of monoids. 
\end{theorem}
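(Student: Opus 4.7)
The plan is to follow the Pontryagin--Thom paradigm that already yields \cref{uniMGL}, adapted from general to special linear bundles. The easy direction is to take a monoid map $\phi:\MSL\to E$ and define, for each special linear bundle $(V,\lambda)$ of rank $n$ on a smooth $X/S$, the class
\[
\thom^{E}(V,\lambda) := \phi_{\ast}\bigl(\thom^{\MSL}(V,\lambda)\bigr),
\]
where $\thom^{\MSL}(V,\lambda)$ is the pullback of the universal Thom class along a classifying map $Th(V) \to \MSL_{n}$. Functoriality under isomorphisms and pullbacks is inherited from $\MSL$; the normalization $\thom^{E}(\mathcal{O}_{X}) = \Sigma_{T}(1)$ uses unitality of $\phi$; and the multiplicative axiom follows from $\phi$ being a ring map together with the corresponding multiplicativity of the universal classes, which was baked into the $T^{2}$-ring structure of \cref{MSL}.

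For the converse, I would construct a spectrum map level by level. Given an $\SL$-oriented $E$, the Thom class of the tautological $\SL$-bundle $\widetilde{\mathcal{T}_{2n,2np}}$ over $\SGr_{2n,2np}$ is, by adjunction, a morphism
\[
Th\bigl(\widetilde{\mathcal{T}_{2n,2np}}\bigr) \longrightarrow \Sigma^{4n,2n} E
\]
in $\SH(S)$; functoriality in the inclusions of finite Grassmannians makes these compatible in $p$, and passing to the colimit produces $\psi_{2n}:\MSL_{2n} \to \Sigma^{4n,2n} E$. The ample-family-of-line-bundles hypothesis plays two roles here: geometrically it guarantees enough $\SL$-bundles on smooth $S$-schemes to be classified by the $\SGr_{2n,2np}$, and homologically it yields the Mittag--Leffler / $\lim^{1}$ control required to pull a well-defined map out of the homotopy colimit.

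The assembly of the $\psi_{2n}$ into a $T^{2}$-spectrum map uses the identification $\widetilde{\mathcal{T}_{2n+2}}\vert_{\BSL_{2n}} \cong \widetilde{\mathcal{T}_{2n}} \oplus \mathcal{O}^{2}$ of diagram~(\ref{MSLstrMap}): under it the structure map of $\MSL$ corresponds to the stabilization $V\mapsto V\oplus\mathcal{O}^{2}$, for which multiplicativity of Thom classes combined with the normalization $\thom^{E}(\mathcal{O}^{2}) = \Sigma_{T^{2}}(1)$ supplies exactly the compatibility $\Sigma^{4,2}\psi_{2n} = \psi_{2n+2}\circ(T^{2}\wedge -)$. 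Preservation of Thom classes for arbitrary $(V,\lambda)$ then follows by naturality along a classifying map $Th(V) \to \MSL_{2n}$.

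The hard part is controlling the non-uniqueness. Two maps $\phi,\phi':\MSL \to E$ inducing the same Thom classes on every $(V,\lambda)$ are pinned down only at each finite Grassmannian stage, so in the limit they can differ by an element of $\lim{}^{1}_{p}\, E^{4n-1,2n}\bigl(Th(\widetilde{\mathcal{T}_{2n,2np}})\bigr)$; this is the \say{particular subgroup} of $\Hom_{\SH(S)}(\MSL,E)$ appearing in the statement. Separately, an $\SL$-orientation records only the direct-sum behaviour of Thom classes and not their compatibility with the external products $\SGr_{2n}\times\SGr_{2m}\to\SGr_{2(n+m)}$ encoding the ring structure on $\MSL$; this is the genuine obstruction to upgrading $\phi$ to a monoid morphism, and is exactly what the hedged last sentence of the theorem records.
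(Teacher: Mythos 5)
Note first that the paper does not prove this statement: it is cited directly from \cite{PW} (Theorems 5.5 and 5.9), so there is no in-paper proof to compare against. Your proposal sketches the Pontryagin--Thom argument that Panin--Walter do in fact use, and the skeleton is right---pullback of Thom classes for the easy direction, assembly of the Thom classes of the tautological $\SL$-bundles over $\SGr_{2n,2np}$ into a $T^{2}$-spectrum map for the converse, non-uniqueness controlled by a Milnor $\lim^{1}$ term, multiplicativity obstruction left open---but two points need correcting.

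First, the ``particular subgroup'' is the $\lim^{1}$ term in the Milnor exact sequence for $[\MSL,E]_{\SH(S)}$ arising from the presentation $\MSL\simeq\nhocolim_{n}\Sigma^{-4n,-2n}\Sigma^{\infty}\MSL_{2n}$; this is a $\lim^{1}$ taken over $n$ (with a further $\lim^{1}_{p}$ possible at each level), not the group $\lim^{1}_{p}E^{4n-1,2n}\bigl(Th(\widetilde{\mathcal{T}_{2n,2np}})\bigr)$ at a single fixed $n$ that you wrote down. The genuine difficulty is stabilizing in $n$: for $\MGL$ the projective bundle theorem forces surjectivity of the restriction maps $E^{*,*}(\BGL_{n+1})\to E^{*,*}(\BGL_{n})$, which kills the $\lim^{1}$ and yields the clean bijection of \cref{uniMGL}, whereas for merely $\SL$-oriented $E$ no such control exists, hence the caveat in the statement. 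Second, the ample-family-of-line-bundles hypothesis on $S$ plays only the first of the two roles you ascribe to it, the geometric one (it guarantees that $\SL$-bundles on smooth $S$-schemes admit classifying maps to the finite $\SGr_{2n,2np}$, via the resolution property and Jouanolou-type devices); it is not providing Mittag--Leffler or $\lim^{1}$ control. If it were, the theorem would assert uniqueness, which it explicitly does not. The $\lim^{1}$ non-vanishing is precisely what ``unique modulo a subgroup'' records, and nothing in the hypotheses removes it.
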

\begin{remark}\label{MSptoMSL}
    A rank $(2n)$ \textit{symplectic} ($\text{Sp}$) bundle over $X$ is a pair $(V, \phi)$, where $V \to X$ is a rank $2n$ vector bundle, and $\phi$ is a given symplectic form. For any symplectic bundle $(V, \phi)$, the Pfaffian induces an isomorphism $det(V) \to \mathcal{O}_X$.\\[1.0000ex]
    Let $G$ denote one of the classical groups $\GL$, $\SL$, or $\text{Sp}$. A cohomology theory $E$ is said to have $G$-orientation if $E$ has Thom isomorphism for all $G$-bundles. Panin-Walter \cite{PW} constructed algebraic symplectic bordism $\text{MSp}$. This is the universal $\text{Sp}$-oriented cohomology theory. Normalized $\text{Sp}$-orientation can be defined specifying Thom classes for symplectic bundles, which satisfy the properties described in \cref{orientation}. 
    \begin{theorem}\label{MSpUniversal}\cite[Theorems 12.2, 13.2]{PW}
     Let $E$ be a commutative monoid in $(\SH(S), \wedge, \mathds{1})$, endowed with a (normalized) $\text{Sp}$-orientation, given by Thom classes $\thom^E(V,\phi)$, for every symplectic bundle $(V,\phi)$ over some $X \in \SmS_S$. There exists a unique morphism of commutative monoids $\phi: \text{MSp} \to E$, such that 
    \[\phi (\thom^{\text{MSp}}(V,\phi)) = \thom^E(V,\phi)\]  
    Every monoid morphism $\text{MSp} \to E$ determines a unique $\text{Sp}$-orientation of $E$.
    \end{theorem}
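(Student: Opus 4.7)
The plan is to follow the blueprint of the universality proofs for $\MGL$ (\cref{uniMGL}) and $\MSL$ (\cref{MSLuniversal}), adapted to the symplectic setting. Recall that $\text{MSp}$ is constructed as a symmetric $T^2$-spectrum whose $n$-th space is the Thom space $\Th(\mathcal{T}^{\text{Sp}}_n)$ of the tautological rank-$2n$ symplectic bundle over a symplectic Grassmannian $\text{HGr}_n := \ncolim_p \text{HGr}_{n,np}$, equipped with its canonical symplectic form. Given an $\text{Sp}$-oriented commutative monoid $E$ with Thom classes $\thom^E(V, \psi)$, the strategy is to pull these classes back along the universal examples to produce a morphism $\text{MSp} \to E$ of symmetric $T^2$-spectra, then promote it to a monoid morphism using the multiplicativity axiom, and finally establish uniqueness together with the reverse direction.

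For the construction, the normalized $\text{Sp}$-orientation assigns to $\mathcal{T}^{\text{Sp}}_n$ a Thom class $\thom^E(\mathcal{T}^{\text{Sp}}_n) \in E^{4n,2n}(\Th(\mathcal{T}^{\text{Sp}}_n))$. Functoriality along the inclusions $\text{HGr}_{n,np} \hookrightarrow \text{HGr}_{n,n(p+1)}$ combined with $\lim^1$-vanishing (of the same flavor as in \cref{dualmsl}) assembles these to an element of $E^{4n,2n}(\text{MSp}_n)$, i.e., a morphism $\text{MSp}_n \to \Sigma^{4n,2n} E$. The identification $\mathcal{T}^{\text{Sp}}_{n+1}|_{\text{HGr}_n} \cong \mathcal{T}^{\text{Sp}}_n \oplus H$ (with $H$ the trivial rank-$2$ hyperbolic symplectic bundle), the normalization $\thom^E(H_X) = \Sigma_{T^2}(1)$, and the multiplicative axiom $\thom^E(V_1 \oplus V_2) = q_1^*(\thom^E(V_1)) \cup q_2^*(\thom^E(V_2))$ together guarantee compatibility with the bonding maps. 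The multiplication $\text{MSp}_n \wedge \text{MSp}_m \to \text{MSp}_{n+m}$ is induced by orthogonal direct sum of symplectic bundles, so multiplicativity of Thom classes translates directly into the assembled morphism being a map of commutative monoids in $\SH(S)$. The relation $\phi(\thom^{\text{MSp}}(V,\psi)) = \thom^E(V,\psi)$ for an arbitrary symplectic bundle $(V,\psi)$ on some $X \in \SmS_S$ then follows by naturality, since every such $(V,\psi)$ is classified up to $\A^1$-homotopy by a map into $\text{HGr}$ and pulls the universal Thom class back to $\thom^{\text{MSp}}(V,\psi)$. The converse is formal: any monoid morphism $\text{MSp} \to E$ yields candidate Thom classes by transporting $\thom^{\text{MSp}}$, and these inherit the $\text{Sp}$-orientation axioms from the corresponding properties of $\thom^{\text{MSp}}$.

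The main obstacle will be the bookkeeping in the symmetric $T^2$-structure: upgrading a morphism of $T^2$-spectra to a \emph{strict} commutative monoid morphism in $\SH(S)$ requires $\Sigma_n$-equivariance on each level and verifying that the constructed Thom classes respect the permutation symmetries encoding commutativity of the smash product. A secondary technical point is ensuring the $\lim^1$-term for $E^{*,*}(\text{MSp}_n)$ vanishes, which typically follows from a projective-bundle-type formula for $\text{Sp}$-oriented theories applied to the symplectic Grassmannian. The cleaner uniqueness statement here, compared with the weaker \say{unique modulo a particular subgroup} clause of \cref{MSLuniversal}, reflects the fact that a symplectic form is a rigid piece of data, without the residual $\Gm$-ambiguity present in the $\SL$ case when choosing a trivialization of the determinant.
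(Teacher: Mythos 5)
The paper does not prove this statement itself; it is recorded as a citation to Panin--Walter \cite[Theorems 12.2, 13.2]{PW}, so there is no internal proof to compare against. Your blueprint is a reasonable summary of the argument in the cited source: assemble the Thom classes $\thom^E(\mathcal{T}^{\text{Sp}}_n)$ over the quaternionic Grassmannians into a map of symmetric $T^2$-spectra via $\lim^1$-vanishing, use the multiplicativity and normalization axioms to make it monoidal and compatible with bonding maps, recover the Thom-class relation by naturality since symplectic bundles are classified by maps to $\text{HGr}$, and note the formal converse. One small caution: your closing remark attributes the cleaner uniqueness for $\text{MSp}$ (versus the modulo-a-subgroup clause in \cref{MSLuniversal}) to the rigidity of the symplectic form, but the actual mechanism in Panin--Walter is the quaternionic projective bundle theorem, which makes $E^{*,*}(\text{HGr}_{n,np})$ a free polynomial-type module over $E^{*,*}$ in the Borel classes and forces the relevant $\lim^1$ term to vanish; the special linear Grassmannians admit no such clean description for a general $\SL$-oriented theory, which is where the residual subgroup originates. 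You do flag the $\lim^1$ issue as a secondary point, so this is more a matter of emphasis than a gap, but it is worth aligning the intuitive explanation with the technical one.
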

\end{remark}
\begin{remark}\label{newSpvanish}
 Let $\nu \in \pi_{3,2}(\mathds{1})$ be the second stable hopf map. It can be constructed via the following construction on $(\SL_2,1)$.
 \[S^{7,4} \cong \Sigma^{1,0} \SL_2 \wedge \SL_2 \xleftarrow{\cong} \SL_2 \ast \SL_2 \to \Sigma^{1,0} \SL_2 \times \SL_2 \to \Sigma^{1,0} \SL_2 \cong S^{3,2}  \]
 Ananyevskiy \cite[Lemma 5.3]{Ananyevskiy2020ThomII} showed that for any $\text{Sp}$-oriented commutative ring spectrum $E$, the map $u_*: \pi_{3,2}(\mathds{1}) \to \pi_{3,2}(E)$, induced by the unit map $\mathds{1} \to E$, takes $\nu$ to $0$.
\end{remark}
\section{An Interpolation between special linear and general algebraic cobordism}\label{An Interpolation between special linear and general algebraic cobordism Section}
\subsection{Main Question} \label[subsection]{question}
We have introduced the canonical map between $\MSL$ and $\MGL$ in \cref{Remark3.1}
\[ \Phi:\MSL \to \MGL\]
From \cref{orientingMGL}, $\MGL_1 \cong \mathds{P}^{\infty}$, we have an orientation map for $\MGL$ \[\varphi:\Sigma^{-2,-1}\Sigma^{\infty} \mathds{P}^{\infty} \to \MGL\]
As $\MGL$ is a ring spectrum, combining these we get a map \begin{equation} \label{main_question}
   \MSL \wedge \Sigma^{-2,-1}\Sigma^{\infty} \mathds{P}^{\infty} \xrightarrow{\Phi \wedge \varphi} \MGL \wedge \MGL \xrightarrow{\mu} \MGL 
\end{equation}
Let's name the composite map in \cref{main_question} $\Psi$. \textbf{Our goal is to prove that $\Psi$ is an isomorphism in $\SH(S)$}.\\[1.000ex]
If $S$ is $Spec(\C)$, as complex realization preserves derived smash product, we get back Conner-Floyd's map \cref{msuCFIntro}.
\subsection{Computing oriented cohomology rings of infinite special linear Grassmannians } \label[subsection]{BSLn}
Let $E$ be an oriented ring cohomology theory represented in $\SH(S)$. We compute $E^{*,*}(\BSL_n)$. The idea is to use a variant of the cofiber sequence \cref{cofiber_SGr}.\\[1.00000ex]
As $det(\mathcal{T}_{n,np})$ is the pullback of $\mathcal{O}(-1)$ on $\mathds{P}^{{np \choose p} -1}$ along the Pl\"ucker embedding of $\GL_{n,np}$ \cite[B.5.7]{Fulton}, $det(\mathcal{T}_{n,np})$ is denoted as $\mathcal{O}(-1)_{\Gr_{n,np}}$.\\[1.0000ex]
We consider the zero section $z: \Gr_{n,np} \to \mathcal{O}(-1)_{\Gr_{n,np}}$. Following \cref{gysin}, we can use the Gysin sequence for the following closed immersion \[z: \Gr_{n,np} \to \mathcal{O}(-1)_{\Gr_{n,np}}\] 
In this case the Gysin map $z_{*}: E^{*,*}(\Gr_{n,np}) \to E^{*+2,*+1} (\mathcal{O}(-1)_{\Gr_{n,np}})$ is the same as $\cup th$ (Thom isomorphism map) composed with the support extension map, as explained below.\\[1.000ex]
Let's consider the map $i:(\mathcal{O}(-1)_{\Gr_{n,np}}, \phi) \to (\mathcal{O}(-1)_{\Gr_{n,np}}, (\mathcal{O}(-1)_{\Gr_{n,np}})^0)$. The induced map $i^E$ is called the support extension map. So, we have the following long exact sequence-\\[1.5ex]
\[\begin{tikzcd}[column sep=scriptsize]
	&& {E^{*+2,*+1} (Th(\mathcal{O}(-1)_{\Gr_{n,np}}))} \\
	{...} & {E^{*,*}(\Gr_{n,np})} & {E^{*+2,*+1}_{\Gr_{n,np}} (\mathcal{O}(-1)_{\Gr_{n,np}})} & {E^{*+2,*+1} (\mathcal{O}(-1)_{\Gr_{n,np}})} \\
	&&& {{E^{*+2,*+1}(\SGr_{n,np})}} & {...}
	\arrow[from=2-1, to=2-2]
	\arrow["{\cup th}", from=2-2, to=2-3]
	\arrow["{:=}"', from=2-3, to=1-3]
	\arrow["{i^E}", from=2-3, to=2-4]
	\arrow[from=2-4, to=3-4]
	\arrow[from=3-4, to=3-5]
\end{tikzcd}\]
It can be written as-\\[1.5ex]
 \begin{equation} \label{finitecohomologycomp}
\begin{tikzcd}[column sep=tiny,row sep=scriptsize]
	{....} & {E^{*+2,*+1} (Th(\mathcal{O}(-1)_{\Gr_{n,np}}))} \\
	{E^{*,*}(\Gr_{n,np})} & {E^{*+2,*+1}_{\Gr_{n,np}} (\mathcal{O}(-1)_{\Gr_{n,np}})} & {E^{*+2,*+1} (\mathcal{O}(-1)_{\Gr_{n,np}})} \\
	&&& {E^{*+2,*+1}(\SGr_{n,np})} \\
	& {} & {E^{*+2,*+1}(\Gr_{n,np})} & {...}
	\arrow["{\cup th}", from=2-1, to=2-2]
	\arrow["{:=}"', from=2-2, to=1-2]
	\arrow["{i^E}", from=2-2, to=2-3]
	\arrow[from=2-3, to=3-4]
	\arrow[from=3-4, to=4-4]
	\arrow["{\cong_{z^*}}", from=2-3, to=4-3]
	\arrow["\Phi"', from=2-1, to=4-3]
	\arrow[from=4-3, to=3-4]
	\arrow[from=1-1, to=2-1]
\end{tikzcd}
\end{equation}
As described in \cref{ChernThomLineEqui}, for a line bundle $Y \to X$, and a choice of Thom class $\Th \in E^{2,1}_X(Y)$, $z^* \circ i^E (th)$ is the corresponding Chern class $c_1(Y)$. From now onwards, we consider cohomology rings of all concerned spaces. From \cref{OrientedCohomologyOfGrassmannian}, we have \[E^{*,*}(\colim_p (\Gr_{n,np})) \cong E^{*,*}(\BGL_n) \cong E^{*,*}(S) \llbracket c_1,..,c_n \rrbracket\]
Here $c_i$ is the $i$-th Chern class of $\mathcal{T}_{n,np}$ with respect to the theory $E$. We have denoted $E^{*,*}(S)$ (or $E_{*,*}(S)$) as $E^{*,*}$ (or $E_{*,*}$). In the inverse system $...\to E^{*,*}(\Gr_{n,n(p+1)}) \to E^{*,*}(\Gr_{n,np})\to E^{*,*}(\Gr_{n,n(p-1)}) \to....$, the pullback maps are surjective \cite[Remark 2.0.6]{PPR}.
Let's consider a short exact sequence of inverse system of Abelian groups
$(A_i) \to (B_i) \to (C_i)$.
If $(A_i)$ satifies Mittag-Leffler conditions, $0 \to \nlim_i(A_i) \to \nlim_i(B_i) \to \nlim_i(C_i) \to 0$ is also exact \cite[Lemma 12.31.3]{stacks-project_Henselization}. Direct sums are exact in Abelian groups.

Taking limit along $p$ in the diagram \cref{finitecohomologycomp}, and considering cohomology rings, we have got the following\\ [2ex]
\begin{equation}\label{lesForLemma}
\begin{tikzcd}[column sep=small]
	{..} & {E^{*,*}\llbracket c_1,..,c_n \rrbracket} & {E^{*,*}\llbracket c_1,..,c_n \rrbracket} & {E^{*,*}\llbracket c_1,..,c_n \rrbracket} & {E^{*,*}(\SGr_n)} & {...} \\
	&&& {E^{*,*}\llbracket c_1,..,c_n \rrbracket}
	\arrow[from=1-1, to=1-2]
	\arrow["\mathbf{th}", from=1-2, to=1-3]
	\arrow["{i^E}", from=1-3, to=1-4]
	\arrow["\cong_{z^E}", from=1-4, to=2-4]
	\arrow[from=1-4, to=1-5]
	\arrow[from=1-5, to=1-6]
	\arrow["{\otimes c_1 (det(\mathcal{T}_{n,np}))}" ', from=1-2, to=2-4]
\end{tikzcd}
\end{equation}\\[2ex]
Here $\mathbf{th}$ is a module morphism of degree $(2,1)$.
\begin{lemma}
    If $E$ is an oriented cohomology theory with additive formal group law (eg. $\HZ$), the first chern class of a vector bundle of finite rank over some smooth scheme with respect to $E$ is the same as the first chern class of its determinant bundle.
\end{lemma}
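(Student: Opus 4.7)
The plan is to prove the equality $c_1^E(V) = c_1^E(\det V)$ via the splitting principle, using only the Cartan formula (\cref{orientationEquivalence}), the formal group law description of $c_1^E$ for tensor products of line bundles (\cref{FormalGroupLaw}), and the projective bundle theorem (\cref{pbt}).

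First I would reduce to the split case. Let $V \to X$ have rank $n$. Iterating the projective bundle $\mathds{P}(V) \to X$ construction (projectivize, then projectivize the tautological quotient, and so on), one obtains the flag bundle $f \colon \mathrm{Fl}(V) \to X$. By \cref{pbt}, at each stage the pullback is split injective on $E^{*,*}$; composing, $f^* \colon E^{*,*}(X) \to E^{*,*}(\mathrm{Fl}(V))$ is injective, and by construction $f^*V$ admits a complete filtration with line-bundle quotients $L_1, \ldots, L_n$. Applying the Cartan formula inductively to this filtration (or, equivalently, splitting it using the Cartan-formula identity $c_i(V_1 \oplus V_2) = \sum c_a(V_1) c_{i-a}(V_2)$ in the graded setting) gives
\[
f^* c_1^E(V) \;=\; c_1^E(f^*V) \;=\; c_1^E(L_1) + c_1^E(L_2) + \cdots + c_1^E(L_n).
\]
Since $\det$ commutes with pullback and converts short exact sequences into tensor products, $f^* \det V \cong L_1 \otimes L_2 \otimes \cdots \otimes L_n$.

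Next I would compute $c_1^E(f^* \det V)$ using the formal group law. By \cref{FormalGroupLaw}, for any two line bundles $M_1, M_2$ over a smooth scheme,
\[
c_1^E(M_1 \otimes M_2) \;=\; F^E\bigl(c_1^E(M_1), c_1^E(M_2)\bigr).
\]
Since $E$ has additive formal group law, $F^E(u_1, u_2) = u_1 + u_2$. Iterating this $n-1$ times yields
\[
c_1^E(f^* \det V) \;=\; c_1^E(L_1) + c_1^E(L_2) + \cdots + c_1^E(L_n).
\]
Combining with the previous display, $f^* c_1^E(V) = f^* c_1^E(\det V)$, and injectivity of $f^*$ forces $c_1^E(V) = c_1^E(\det V)$.

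There is no real obstacle here: the only subtle point is to verify that the Cartan formula in \cref{orientationEquivalence}, applied to a short exact sequence $0 \to L_1 \to V \to V'' \to 0$, yields $c_1(V) = c_1(L_1) + c_1(V'')$; this is the $i = 1$ case and follows from $c_0 \equiv 1$. Induction on the rank then delivers $c_1(L_1 \oplus \cdots \oplus L_n) = \sum c_1(L_i)$ on the flag bundle, which is where the geometry enters. Everything else is an application of additivity of $F^E$ and injectivity of pullback along $\mathrm{Fl}(V) \to X$.
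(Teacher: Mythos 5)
Your proof is correct and takes essentially the same route as the paper: the paper also invokes the splitting principle (citing it directly) to reduce to the split case, then combines the Cartan formula for $c_1$ with additivity of the formal group law. You have simply unpacked the splitting principle (flag bundle, injectivity of pullback via \cref{pbt}) rather than citing it as a black box, but the argument is identical in substance.
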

\begin{proof}
    Using splitting principle \cite[3.5]{PaninOriented2}, \cite[Lemma 2.2]{splitting_principle}, we can assume the vector bundle $V$ to be direct sum of line bundles. For any $E$ and $V \cong L_1 \oplus L_2 \oplus...\oplus L_n$, 
    \[c_1(V) = c_1(L_1) + c_1(L_2)+...+ c_1(L_n) \]
    In this case, $det(V) \cong L_1 \otimes L_2 \otimes...\otimes L_n$. As $E$ has additive formal group law, $c_1(det(V)) = \sum^n_{i=1} c_1(L_i)$
\end{proof}
Now onwards, we only consider oriented cohomology theories with additive formal group law. For such an oriented cohomology theory $E$, $c_1 (det(\mathcal{T}_{n,np}))$ is the same as $c_1 (\mathcal{T}_{n,np})$. Therefore from \cref{lesForLemma} we have got the following long exact sequence-\\[2ex]
\begin{tikzcd}
	{..} & {E^{*,*}\llbracket c_1,..,c_n \rrbracket} && {E^{*,*}\llbracket c_1,..,c_n \rrbracket} & {E^{*,*}(\BSL_n)} & {...}
	\arrow[from=1-1, to=1-2]
	\arrow[from=1-5, to=1-6]
	\arrow["{\otimes c_1}", from=1-2, to=1-4]
	\arrow["f",from=1-4, to=1-5]
\end{tikzcd}\\ [2ex]
As $c_1$ is a not a zero divisor in ${E^{*,*}\llbracket c_1,..,c_n \rrbracket}$, $\otimes c_1$ is an injective map that sits in a long exact sequence. Therefore the map $f$ has to be surjective. The following lemma summarizes the above computation.
\begin{lemma} \label[lemma]{first_computation}
  If $E$ is an oriented cohomology theory with additive formal group law, $E^{*,*}(\BSL_n) \cong E^{*,*}\llbracket c_2,..,c_n \rrbracket$.   
\end{lemma}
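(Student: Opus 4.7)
The plan is to exploit the cofiber sequence relating the oriented Grassmannian $\SGr_{n,np}$ to the Grassmannian $\Gr_{n,np}$ via the Thom space of $\det(\mathcal{T}_{n,np})$, and to pass to the limit over $p$. Concretely, since $\SGr_{n,np}$ is the complement of the zero section in $\det(\mathcal{T}_{n,np})$, homotopy purity produces a cofiber sequence $\SGr_{n,np} \to \Gr_{n,np} \to \mathrm{Th}(\det(\mathcal{T}_{n,np}))$, and applying the oriented theory $E$ yields a long exact Gysin sequence. The orientation on $E$ provides a Thom isomorphism $E^{*,*}(\Gr_{n,np}) \xrightarrow{\cup \mathrm{th}} E^{*+2,*+1}(\mathrm{Th}(\det(\mathcal{T}_{n,np})))$, and under this identification the boundary map becomes multiplication by $c_1(\det(\mathcal{T}_{n,np}))$ via the relation $z^E \circ i^E(\mathrm{th}) = c_1(\det(\mathcal{T}_{n,np}))$ recalled in \cref{ChernThomLineEqui}.

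Next I would pass to the limit over $p$. The restriction maps $E^{*,*}(\Gr_{n,n(p+1)}) \to E^{*,*}(\Gr_{n,np})$ are surjective (as noted via \cite[Remark 2.0.6]{PPR}), so the inverse system satisfies the Mittag-Leffler condition and $\lim^1$ vanishes; this lets me take $\lim_p$ of the Gysin long exact sequence termwise and identify the first two terms with $E^{*,*}(\BGL_n) \cong E^{*,*}\llbracket c_1, \dots, c_n \rrbracket$ via \cref{OrientedCohomologyOfGrassmannian}. The resulting long exact sequence reads
\[
\cdots \to E^{*,*}\llbracket c_1,\dots,c_n\rrbracket \xrightarrow{\,\cdot\, c_1(\det \mathcal{T}_n)} E^{*,*}\llbracket c_1,\dots,c_n\rrbracket \to E^{*,*}(\BSL_n) \to \cdots
\]

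Here I would invoke the additive formal group law assumption, together with the splitting principle, to identify $c_1(\det(\mathcal{T}_n))$ with the ordinary first Chern class $c_1 = c_1(\mathcal{T}_n)$: for $V \cong L_1 \oplus \cdots \oplus L_n$, $c_1(V) = \sum c_1(L_i)$ and $c_1(\det V) = c_1(L_1 \otimes \cdots \otimes L_n) = \sum c_1(L_i)$ when $F(u,v) = u+v$. Finally, since $c_1$ is a regular element (not a zero divisor) in the power series ring $E^{*,*}\llbracket c_1,\dots,c_n\rrbracket$, the multiplication-by-$c_1$ map is injective. The long exact sequence therefore breaks into short exact sequences, and the map $f$ from $E^{*,*}\llbracket c_1,\dots,c_n\rrbracket$ onto $E^{*,*}(\BSL_n)$ is surjective with kernel $(c_1)$, giving
\[
E^{*,*}(\BSL_n) \cong E^{*,*}\llbracket c_1,\dots,c_n\rrbracket / (c_1) \cong E^{*,*}\llbracket c_2,\dots,c_n\rrbracket.
\]

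The main subtlety is the passage to the limit: one needs to verify that the Gysin sequences assemble coherently in $p$ and that $\lim^1$ vanishes on each term, which is why the Mittag-Leffler surjectivity of the tower of Chern class presentations is essential. The additive formal group law hypothesis is not a cosmetic assumption either; without it the boundary map would be multiplication by $c_1(\det \mathcal{T}_n) = F(c_1(L_1), \ldots, F(\ldots))$, which in general differs from $c_1(\mathcal{T}_n)$ by higher-order corrections, and the clean quotient description would fail.
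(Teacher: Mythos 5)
Your proposal is correct and follows essentially the same route as the paper: the Gysin sequence associated to the zero section of $\det(\mathcal{T}_{n,np})$, identification of the composite with cup product by $c_1(\det\mathcal{T}_{n,np})$ via \cref{ChernThomLineEqui}, passage to the limit over $p$ with Mittag--Leffler justifying exactness and the identification $\lim_p E^{*,*}(\Gr_{n,np})\cong E^{*,*}\llbracket c_1,\dots,c_n\rrbracket$, the splitting-principle reduction of $c_1(\det\mathcal{T}_n)$ to $c_1(\mathcal{T}_n)$ under the additive formal group law, and regularity of $c_1$ to split the long exact sequence. You are marginally more explicit than the paper in stating that $\ker f = (c_1)$, but this is exactly what the paper's exactness argument yields implicitly.
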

\subsection{Computing oriented (co)homology of $\MSL \wedge \Sigma^{-2,-1} \Sigma^{\infty} \mathds{P}^{\infty}$} \label[subsection]{EofLHS}
Our goal is to show $\Psi \wedge E$ is an isomorphism in $\SH(S)$, for any oriented cohomology theory $E$ with additive formal group law. Therefore, what we want to compute is $E_{*,*}(\MSL \wedge \Sigma^{-2,-1} \Sigma^{\infty}\mathds{P}^{\infty})$. As homotopy colimit is well-behaved with (de)suspension, smash product and homology, we need to compute $\ncolim_n E_{*,*}(\MSL \wedge \Sigma^{-2,-1} \Sigma^{\infty} \mathds{P}^n)$. We know that $\Sigma^{-2,-1} \Sigma^{\infty} \mathds{P}^n$ is strongly dualizable with dual $\Sigma^{-2,-1} \Sigma^{\infty} ({\mathds{P}^n})^{\nu}$ \cite[Lemma 3.18]{HU_picard}. Using the notation from \cite[Section 3]{HU_picard}, we recall that $\nu$ is some virtual normal bundle of $\mathds{P}^n$, and $({\mathds{P}^n})^{\nu}$ is its \say{thomification}.\\
As $E^{*,*}(\mathds{P}^n)$ is a finitely generated free $E^{*,*}$ module, using thom isomorphism for $E$-cohomology, so is $E^{*,*}(({\mathds{P}^n})^{\nu})$.\\[1.5ex] 
Now using \cref{kunnethweuse} we get \[{E_{*,*} (\MSL) \otimes_{E_{*,*}} E_{*,*} (\Sigma^{-2,-1}\Sigma^{\infty} \mathds{P}^{\infty})} \xrightarrow{\cong} {E_{*,*} (\MSL \wedge  \Sigma^{-2,-1}\Sigma^{\infty} \mathds{P}^{\infty})}\]
Using dual Thom isomorphism \[E_{*,*} (\MSL) \cong \ncolim_n E_{*+2n,*+n}(\MSL_n)  \cong \ncolim_n E_{*,*} (\BSL_n)  \cong \ncolim_n E_{*,*} (\ncolim_p \Sigma (\SGr_{n,np}))\]
Similarly, \[E_{*,*} (\Sigma^{\infty} \mathds {P}^{\infty}) \cong \ncolim_{n,p} E_{*,*} (\Sigma^{\infty} \mathds{P}^{np})\]
So, what we want to compute is the following
\begin{equation} \label{whatWeWant}
    \ncolim_n E_{*,*} (\ncolim_p \Sigma^{\infty} (\SGr_{n,np})) \otimes_{E_{*,*}} E_{*,*}(\ncolim_p \Sigma^{\infty} \mathds{P}^{np}) 
\end{equation} 
We know that $\Sigma^{\infty} (\SGr_{n,np})$, and $\Sigma^{\infty} \mathds{P}^{np}$, are both cellular. Using \cite[Proposition 6.1]{motlandweber}, $E^{*,*} (\mathds{P}^{np})$ is flat over $E^{*,*}$. In the proof of \cref{bs1}, we show that $E^{*,*} (\Sigma^{\infty} (\SGr_{n,np}))$ is flat over $E^{*,*}$.\\[1.000ex]
To use \cref{dualmsl}, the condition of vanishing of $\text{lim}^1$ in the following inverse system \[...\to E^{*,*}(X_{i+1}) \to E^{*,*}(X_{i}) \to...\to E^{*,*}(X_{0}))\] is required. This is to make sure that $E^{*,*}(\ncolim_i X_{i}) \cong \nlim_i E^{*,*}(X_{i})$. That is the case for the systems $\{E^{*,*} (\mathds{P}^{np})\}_p$, and $\{E^{*,*} (\SGr_{n,np})\}_p$.\\[1.000ex]
Now, using \cref{dualmsl}, we get that \cref{whatWeWant} is equivalent to the following:
\[\ncolim_n(\Hom_{E_{*,*,c}}(E^{*,*} (\ncolim_p \Sigma (\SGr_{n,np})),E_{*,*}) \bigotimes_{E_{*,*}} \Hom_{E_{*,*,c}}( E^{*,*}(\ncolim_p \Sigma \mathds{P}^{np}), E_{*,*}))\]
\[ \cong \ncolim_n \left(\Hom_{E_{*,*,c}}(\nlim_p E^{*,*} ( \Sigma (\SGr_{n,np})),E_{*,*}) \bigotimes_{E_{*,*}} \Hom_{E_{*,*,c}}(\lim_p E^{*,*}( \Sigma \mathds{P}^{np}), E_{*,*})\right)\]
\[ \cong \ncolim_n\left(\ncolim_p \Hom_{E_{*,*}}( E^{*,*} ( \Sigma (\SGr_{n,np})),E_{*,*}) \bigotimes_{E_{*,*}} \ncolim_p \Hom_{E_{*,*}}( E^{*,*}( \Sigma \mathds{P}^{np}), E_{*,*})\right)\]
\[\cong \ncolim_n \ncolim_p \left( \Hom_{E_{*,*}}( E^{*,*} ( \Sigma (\SGr_{n,np})),E_{*,*}) \bigotimes_{E_{*,*}}  \Hom_{E_{*,*}}( E^{*,*}( \Sigma \mathds{P}^{np}), E_{*,*})\right)\]
Using \cref{bs1} and \cref{bs1.5}, this is the same as the following
\begin{align*}
    & \ncolim_n  \Hom_{E_{*,*,c}} (E^{*,*}\llbracket c_1,c_2,..,c_n \rrbracket, E_{*,*}) & \\
    & \cong \ncolim_n  \Hom_{E_{*,*,c}} (E^{*,*}(\BGL_n), E_{*,*}) & \\
    & \cong   \Hom_{E_{*,*,c}} ( \nlim_n E^{*,*}(\BGL_n), E_{*,*}) & \\
    & \cong   \Hom_{E_{*,*,c}} ( E^{*,*}(\BGL), E_{*,*}) &\\
    & \cong   \Hom_{E_{*,*,c}} ( E^{*,*}(\MGL), E_{*,*}) &\\
    & \cong   \Hom_{E_{*,*,c}} ( E^{*,*}(\Sigma^{2,1}\MGL ), E_{*,*}) & \\
\end{align*}
\begin{lemma} \label[lemma]{bs1}
  We have the following isomorphism \[\lim_p \left(E^{*,*} ( \Sigma (\SGr_{n,np})) \bigotimes_{E_{*,*}}  E^{*,*}( \Sigma \mathds{P}^{np})\right) \cong \lim_p \left(\frac{E^{*,*}[c_2,..,c_n]}{I'} \otimes_{E_{*,*}} \frac{E^{*,*}[c_1]}{c_1^{np+1}} \right)\]
  Here $I'$ is some ideal of $E^{*,*}[c_2,..,c_n]$.
\end{lemma}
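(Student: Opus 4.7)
The plan is to compute each tensor factor at finite level $p$ using the methods already in play, and then combine and take limits.

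For the projective space factor, I would apply the projective bundle theorem (\cref{pbt}) to the trivial rank $np+1$ bundle over $S$. This immediately yields $E^{*,*}(\mathds{P}^{np}) \cong E^{*,*}[c_1]/(c_1^{np+1})$, with $c_1 = c_1(\mathcal{O}(-1))$; the suspension $\Sigma$ only effects a bidegree shift compatible with the tensor product computation, so this gives the right-hand factor on the nose. Moreover the transition maps in $p$ (induced by the closed embeddings $\mathds{P}^{np} \hookrightarrow \mathds{P}^{n(p+1)}$) are surjective, so Mittag--Leffler holds and the inverse limit behaves well.

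For the oriented Grassmannian factor, I would carry out the same Gysin sequence strategy as in the proof of \cref{first_computation}, but stop at finite level $p$. The cofiber sequence $\SGr_{n,np} \to \Gr_{n,np} \to \text{Th}(\det(\mathcal{T}_{n,np}))$ together with the Thom isomorphism produces, for each $p$, a long exact sequence
\[
\cdots \to E^{*,*}(\Gr_{n,np}) \xrightarrow{\,\cdot\, c_1\,} E^{*+2,*+1}(\Gr_{n,np}) \to E^{*+2,*+1}(\SGr_{n,np}) \to \cdots
\]
where I have used the additive formal group law assumption to identify $c_1(\det(\mathcal{T}_{n,np})) = c_1(\mathcal{T}_{n,np}) = c_1$. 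The cohomology $E^{*,*}(\Gr_{n,np})$ is a free $E^{*,*}$-module of finite rank (Schubert basis), admitting a presentation $E^{*,*}[c_1,\ldots,c_n]/J_{n,p}$ coming from the vanishing of the top Chern classes of the complementary rank $n(p-1)$ bundle; killing $c_1$ produces a quotient of the form $E^{*,*}[c_2,\ldots,c_n]/I'$ for an ideal $I' = I'(n,p)$ which is the image of $J_{n,p}$. Taking the limit over $p$, the Mittag--Leffler condition for the pro-system together with the fact that multiplication by $c_1$ is injective on the limit $E^{*,*}[\![c_1,\ldots,c_n]\!]$ forces the long exact sequence to degenerate into a short exact sequence in the limit, so the limit of the $E^{*,*}[c_2,\ldots,c_n]/I'$ tower recovers $\lim_p E^{*,*}(\Sigma\SGr_{n,np})$.

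Finally, I would observe that both towers $\{E^{*,*}(\SGr_{n,np})\}_p$ and $\{E^{*,*}(\mathds{P}^{np})\}_p$ consist of flat (indeed, free of finite rank modulo the presentation ideals) $E^{*,*}$-modules with surjective structure maps; in particular the Mittag--Leffler condition holds for each tower and for their degreewise tensor product, so the limit of the tensor products equals the tensor product of the limits when formed level-by-level inside the $\lim_p$. Assembling the two identifications gives the stated isomorphism. The main obstacle I expect is pinning down the ideal $I'$ at each finite level: because $c_1$ is nilpotent in the finite-dimensional ring $E^{*,*}(\Gr_{n,np})$ it has nontrivial kernel, so a naive ``quotient by $c_1$'' description fails at finite stages and the honest finite-level answer involves extension data coming from $\ker(\cdot c_1)$. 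Since the statement only requires a presentation of the \textit{form} $E^{*,*}[c_2,\ldots,c_n]/I'$ inside the inverse limit, this obstacle can be bypassed by working systematically with the limits and using that these extension contributions are killed in the Mittag--Leffler limit.
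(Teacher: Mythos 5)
Your strategy mirrors the paper's: apply the Gysin long exact sequence coming from $\SGr_{n,np} \to \Gr_{n,np} \to \text{Th}(\det \mathcal{T}_{n,np})$, use the additive formal group law to identify the boundary with multiplication by $c_1$, and argue that the kernel of $\otimes c_1$ (the ``extension contribution'' you flag at the end) dies in the inverse limit. You have correctly identified both the mechanism and the obstruction, and the projective-space factor is handled exactly as in the paper.

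The gap is that the sentence ``these extension contributions are killed in the Mittag--Leffler limit'' is precisely the nontrivial content, and your proposal asserts rather than proves it. The paper fills this in as follows. It first shows that the short exact sequence $0 \to \mathrm{Im}(f) \to E^{*,*}(\SGr_{n,np}) \to \mathrm{Im}(g) \to 0$ splits as $E^{*,*}$-modules; this requires knowing that $\mathrm{Im}(\otimes c_1)$ is a finitely generated \emph{free} $E^{*,*}$-module, which is \cref{bs2} --- a delicate point whose proof in an earlier version of the paper was actually incorrect and had to be repaired. The splitting gives $E^{*,*}(\SGr_{n,np}) \cong \mathrm{Im}(f) \oplus \mathrm{Im}(g)$ with $\mathrm{Im}(f) \cong E^{*,*}[c_2,\ldots,c_n]/I'$ and $\mathrm{Im}(g) = \ker(\otimes c_1)$; it is then argued that $\lim_p \mathrm{Im}(g) = 0$ and, using finite generation of $E^{*,*}(\mathds{P}^{np})$, that $\lim_p \bigl(\mathrm{Im}(g) \otimes_{E_{*,*}} E^{*,*}(\mathds{P}^{np})\bigr) = 0$, so the kernel summand drops out of the limit of the tensor products. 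Your proposal never establishes this splitting or the vanishing after tensoring. Relatedly, your phrase ``the limit of the tensor products equals the tensor product of the limits'' is not what the lemma claims (and is false in general even under Mittag--Leffler); the statement compares two different limits of tensor products, and the reduction from one to the other is exactly the splitting-plus-vanishing argument above --- the two towers do not decouple for free.
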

\begin{proof}
    From \cite[Remark 2.0.6]{PPR}, or following \cite[Section 6.2]{motlandweber}, $E^{*,*}(\Gr_{n,np})$ is a finite rank free $E^{*,*}$ module, multiplicatively generated by the chern classes $c_1,..,c_n$ of the tautological $n$-bundle $\mathcal{T}_{n,np}$. So, $E^{*,*}(\Gr_{n,np}) \cong \frac{E^{*,*}[c_1,..,c_n]}{I}$, for some ideal $I$. 
From \cref{BSLn}, we have got the following long exact sequence for each ${p,n}$:
\[... \to {E^{r,s}(\Gr_{n,np})} \xrightarrow{\otimes c_1} {E^{r+2,s+1}(\Gr_{n,np})} \to {E^{r+2,s+1}(\SGr_{n,np})} \to {E^{r+2,s+1}(\Gr_{n,np})} \to ...\]
Considering the $E^{*,*}$ cohomology rings, we have the following long exact sequence:
	\[... \to \frac{E^{*,*}[c_1,..,c_n]}{I} \xrightarrow{\otimes c_1} \frac{E^{*,*}[c_1,..,c_n]}{I} \xrightarrow{f} E^{*,*}(\SGr_{n,np}) \xrightarrow{g} \frac{E^{*,*}[c_1,..,c_n]}{I} \xrightarrow{\otimes c_1} \frac{E^{*,*}[c_1,..,c_n]}{I} \to ...\]

From this we get the following short exact sequence of $E^{*,*}$-modules:

	\[0 \to Im(f) \to {E^{*,*}(\SGr_{n,np})} \to {Im(g) } \to 0\]
	
As $E^*$-module $Im(f) \cong \frac{E^{*,*}[c_1,..,c_n]}{I} / Im (\otimes c_1)$ and $Im (g) \cong Ker (\otimes c_1)$. We also have the following short exact sequence:
\[0 \to Im (g) \to \frac{E^{*,*}[c_1,..,c_n]}{I} \to Im (\otimes c_1) \to 0\]

We have $\frac{E^{*,*}[c_1,..,c_n]}{I}$ is a finitely generated free $E^{*,*}$-module, and using \cref{bs2} so is $Im (\otimes c_1)$. So, both $Im(g)$, and $Im(f)$ are projective $E^{*,*}$-module, and $E^{*,*}(\SGr_{n,np})$$ \cong Im(f) \oplus Im(g)$ as $E^{*,*}$-module. In particular $E^{*,*}(\SGr_{n,np})$ is a flat $E^{*,*}$-module.\\[1.0000ex]
Now we have that
\[E^{*,*}(\SGr_{n,np}) \otimes_{E_{*,*}} E^{*,*}(\mathds{P}^{np}) \cong \left(Im(f) \otimes_{E_{*,*}} E^{*,*}(\mathds{P}^{np})\right) \oplus \left(Im(g) \otimes_{E_{*,*}} E^{*,*}(\mathds{P}^{np})\right)\]
Looking at the limiting case, $\lim_p Im(g) \cong \lim_p Ker (\otimes c_1) \cong 0$. As, $E^{*,*}(\mathds{P}^{np})$ is a finitely generated $E^{*,*}$-module, $\lim_p \left(Im(g) \otimes_{E_{*,*}} E^{*,*}(\mathds{P}^{np})\right) = 0$. 
So, for some fixed $n$, we have 
\[\lim_p \left(E^{*,*} ( \Sigma (\SGr_{n,np})) \bigotimes_{E_{*,*}}  E^{*,*}( \Sigma \mathds{P}^{np})\right) \cong \lim_p \left(\frac{E^{*,*}[c_2,..,c_n]}{I'} \otimes_{E_{*,*}} \frac{E^{*,*}[c_1]}{c_1^{np+1}} \right)\]
We are implicitly using the fact that the map $E^{*,*}(\Gr_{n,np}) \to E^{*,*}(\Gr_{1,np})$, induced by the canonical inclusion takes $c_1(\Taut_{n,np})$ to $c_1(\Taut_{1,np})$, and $c_i$ to $0$, for $i > 1$. So, we have 
\[\lim_p\left(E^{*,*} ( \Sigma (\SGr_{n,np})) \bigotimes_{E_{*,*}}  E^{*,*}( \Sigma \mathds{P}^{np})\right) \cong \lim_p\left(\frac{E^{*,*}[c_1,c_2,..,c_n]}{<c_1^{np+1}, I'>}\right)\]
\end{proof}
\begin{remark}\label{bs1.5}
    Again, looking at the limiting case \[\lim_p  \left( \frac{E^{*,*}[c_2,..,c_n]}{Im(\otimes c_1)} \right ) \cong \lim_p \left (\frac{E^{*,*}[c_2,..,c_n]}{I'} \right ) \cong E^{*,*}\llbracket c_2,..,c_n\rrbracket\]
    So, we have
    \[\lim_p \left (\frac{E^{*,*}[c_1,c_2,..,c_n]}{<c_1^{np+1}, I'>} \right) \cong E^{*,*}\llbracket c_1,c_2,..,c_n \rrbracket\]\\
\end{remark}
\begin{remark}
    The proof of \cref{bs2} was incorrect in the previous version. The author is very grateful to Matthias Wendt for finding the mistake, as well as providing an idea to fix it.
\end{remark}
\begin{lemma}\label[lemma]{bs2}
  Using the notation in \cref{bs1}, $Im(\otimes c_1)$ is finitely generated free as an $E^{*,*}$ module.
\end{lemma}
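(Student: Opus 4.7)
The plan is to reduce the freeness question to a combinatorial statement about the Pieri matrix on the truncated Young's lattice, and then invoke a classical result (or find a geometric substitute). First, I would use the Schubert basis decomposition $E^{*,*}(\Gr_{n,np}) = \bigoplus_\lambda E^{*,*} \cdot \sigma_\lambda$, indexed by partitions $\lambda$ fitting inside the $n \times (np-n)$ rectangle, with $\sigma_\lambda$ in motivic bidegree $(2|\lambda|, |\lambda|)$. Since multiplication by $c_1 = \sigma_{(1)}$ raises bidegree by $(2,1)$, the map $\otimes c_1$ decomposes as a direct sum of graded components $\phi_k \colon M_k \to M_{k+1}$, where $M_k$ is the free $E^{*,*}$-module on $\{\sigma_\lambda : |\lambda| = k\}$. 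Consequently $\mathrm{Im}(\otimes c_1) = \bigoplus_k \mathrm{Im}(\phi_k)$, so it suffices to show each $\mathrm{Im}(\phi_k)$ is finitely generated free over $E^{*,*}$.

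The reduction step: by Pieri's rule (valid for an oriented theory with additive formal group law), the matrix of $\phi_k$ in the Schubert basis has all entries in $\{0,1\}$, viewed as integers. Thus $\phi_k = A_k \otimes_{\mathbb{Z}} E^{*,*}$ for an integer matrix $A_k \colon \mathbb{Z}^{p(k)} \to \mathbb{Z}^{p(k+1)}$. Freeness of $\mathrm{Im}(\phi_k)$ over $E^{*,*}$ would then follow from showing that $\mathrm{Im}(A_k)$ is a $\mathbb{Z}$-direct summand of $\mathbb{Z}^{p(k+1)}$: any splitting over $\mathbb{Z}$ is preserved after tensoring with $E^{*,*}$, making $\mathrm{Im}(\phi_k)$ free of rank $\mathrm{rank}(A_k)$. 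The direct summand property is equivalent to the nonzero invariant factors of $A_k$ all being $1$, i.e.\ to the Smith normal form of $A_k$ being unimodular.

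The main obstacle is exactly this unimodularity of the up operator on the truncated Young's lattice. Classically this follows from the existence of a symmetric chain decomposition of the poset of partitions in an $n \times m$ rectangle (after Proctor), or alternatively from the $\mathfrak{sl}_2$-representation-theoretic interpretation of Young's lattice; once established, one obtains an explicit $E^{*,*}$-basis of $\mathrm{Im}(\phi_k)$ as a subcollection of integer combinations of Schubert classes. I suspect the geometric suggestion of Matthias Wendt alluded to in the preceding remark gives a more direct route that avoids invoking these combinatorial facts: by stratifying $\Gr_{n,np}$ inductively on $p$ (for instance, via the open complement of the Schubert divisor associated to $c_1$, which is an affine bundle over $\Gr_{n,np-1}$) and analysing the corresponding Gysin sequence in $E$-cohomology, one can inductively peel off a free $E^{*,*}$-summand realising $\mathrm{Im}(\otimes c_1)$ as a directly presented free module, bypassing the unimodularity question entirely.
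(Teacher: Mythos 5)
Your reduction is exactly right, and in fact it makes explicit a point on which the paper's own proof is somewhat cursory. The decomposition by bidegree, the identification $\phi_k = A_k \otimes_{\Z} E^{*,*}$ via Pieri's rule, and the observation that freeness of $\mathrm{Im}(\phi_k)$ over $E^{*,*}$ would follow once $\mathrm{Im}(A_k)$ is a $\Z$-direct summand of $\Z^{p(k+1)}$ (since $E^{*,*}$ need not be $\Z$-torsion-free, so $\mathrm{Tor}^{\Z}_1(E^{*,*},\mathrm{Coker}(A_k))$ must be controlled before one may identify $\mathrm{Im}(\mathrm{id}\otimes A_k)$ with $E^{*,*}\otimes_{\Z}\mathrm{Im}(A_k)$) are all correct. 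The paper instead works directly with the presentation $E^{*,*}(\Gr_{n,np}) \cong E^{*,*}\otimes_{\Z}\Z[x_1,\ldots,x_{np-p}]/(s_{p+1},\ldots,s_{np})$, exhibits the determinantal Schur-polynomial $\Z$-basis, notes that $x_1 = -c_1$ is itself one of those basis elements, and then concludes ``by tensoring with $E^{*,*}$.'' Your analysis shows that this tensoring step rests on precisely the direct-summand property you isolate, which the basis-element observation (giving only that $\otimes c_1$ is injective on $E^{*,*}$-multiples of the unit) does not by itself establish.

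The gap in your proposal lies in the cited combinatorial input. Proctor's theorem on $L(m,n)$, the poset of partitions in an $m\times n$ box, is the strong Sperner property, proved via a rational $\mathfrak{sl}_2$-action (hard Lefschetz over $\Q$); it is \emph{not} a symmetric chain decomposition, and the existence of a symmetric chain decomposition of $L(m,n)$ for general $m,n$ is a long-standing open problem (settled only for $\min(m,n)\le 4$). Moreover, the rational $\mathfrak{sl}_2$-structure gives that each $A_k$ has full rank over $\Q$, but not that its Smith normal form is trivial over $\Z$; these are genuinely different assertions, and the analogous integral question for Boolean-lattice inclusion matrices already has nontrivial invariant factors, so no soft argument will do. Your geometric alternative via the Schubert divisor stratification is only a sketch. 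As written, the unimodularity you need is asserted rather than proved: to close the argument you would have to supply a genuine proof that $\mathrm{Coker}(\cup c_1)$ on the integral Schubert ring of the Grassmannian is torsion-free, or develop the inductive geometric argument in detail.
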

\begin{proof}
    Following the notation from \cite[section 6.2]{motlandweber}, we have \[E^{*,*}(\Gr_{n,np}) \cong E^{*,*} \otimes_{\Z} \frac{\Z[x_1,x_2,..,x_{np-p}]}{(s_{p+1},...,s_{np})}\] Consider the following universal short exact sequence on $\Gr(n-r,n)$. 
\[\begin{tikzcd}[column sep=small]
	0 & {\mathcal{K}_{n-r}} & {\mathcal{O}^n} & {\mathcal{Q}_r} & 0
	\arrow[from=1-1, to=1-2]
	\arrow[from=1-2, to=1-3]
	\arrow[from=1-3, to=1-4]
	\arrow[from=1-4, to=1-5]
\end{tikzcd}\]\\
Here $\mathcal{K}_{n-r}$ is the tautological subbundle, and $\mathcal{Q}_r$ is the tautological quotient bundle. Let $x_i$ be the $i$th Chern class of $\mathcal{K}_{n-r}$. In our notation, $c_i$ is the $i$th Chern class of $\mathcal{Q}_r$. Similar to \cite[page 183]{3264} or \cite{motlandweber}, $s_i$'s satisfy
\[1 + \Sigma s_i t^i = (1 + x_1 t+..+x_{n-d} t^{n-d})^{-1} \in \Z[x_1,x_2,..,x_{np-p}]\llbracket t \rrbracket^{\times}\]
Then as a finitely generated $\Z$ module, a basis for $\frac{\Z[x_1,x_2,..,x_{n-d}]}{(s_{d+1},...,s_{n})}$ can be given by the following:\\
For every $n$, such that $n-d \geq a_1 \geq a_2 \geq a_d \geq 0$, choose a tuple $a = (a_1,a_2,..,a_d)$. 
$$(\underline{a}) = det \begin{pmatrix}x_{a_1} & x_{a_1 +1} & ..... & x_{a_1 +d-1}\\
x_{a_2-1} & x_{a_2} & ..... & x_{a_2 +d-2}\\
..... & .....& .....& .....\\
x_{a_d -d +1} & ..... & ..... & x_{a_d}\\
\end{pmatrix}$$
Here $x_i = 0$, for $i < 0$ or $i > n-d$, and $x_0 = 1$ . These polynomials $(\underline{a})$ form a basis.
Now, we choose a particular $d$-tuple $a = (1,0,0,..,0)$. Then 
$$(\underline{a}) = det \begin{pmatrix}x_{1} & x_{2} & ..... & x_{d}\\
0 & 1 & ..... & x_{ d-2}\\
0& 0& .....& .....\\
0 & 0 & 0& 1\\
\end{pmatrix}$$
So, $x_1$ is a basis element of the basis described above. We have that, $c_1 = -x_1$ (Using Cartan formula for Chern classes), is a basis of the finitely generated free $E^{*,*}$ module $\frac{E^{*,*}[c_1,..,c_n]}{I}$. Therefore, the map denoted as $\otimes c_1$ does not take any non-zero element of $E^{*,*}$ to $0$.\\[1.0000ex]
As $\frac{\Z[x_1,x_2,..,x_{np-p}]}{(s_{p+1},...,s_{np})}$ is a finitely generated free Abelian group, so is the ideal $Im(\otimes (-x_1))$ in it. By tensoring with $E^{*,*}$, we get $Im (\otimes c_1)$ is finitely generated free as claimed.
\end{proof}
\subsection{A closer look at the map $\Psi \wedge E$} \label[subsection]{Eiso}
Let's consider the map $\Phi: \MSL \to \MGL$. From our previous computation the induced map \[\Phi^*: E^{*,*}(\MGL) \cong E^{*,*} \llbracket c_1,c_2,... \rrbracket \to  E^{*,*}(\MSL) \cong E^{*,*} \llbracket c_2,c_3,.. \rrbracket\] takes $c_1$ to $0$, and $c_i$ to $c_i$ for $i >1$. \\[1.5ex]
We have the map $\varphi: \Sigma ^{\infty-2, \infty-1} \mathds{P^{\infty}} \to \MGL$. The induced map \[\varphi^*: E^{*,*}(\MGL) \cong E^{*,*} \llbracket c_1,c_2,. \rrbracket \to  E^{*,*}(\Sigma ^{\infty-2, \infty-1} \mathds{P^{\infty}}) \cong E^{*,*} \llbracket c_1 \rrbracket\] takes $c_1$ to $c_1$, and $c_i$ to $0$, for $i >1$.\\[1.5ex]
We now consider the induced map $\mu^*$ induced from the ring spectrum map $\mu: \MGL \wedge \MGL \to \MGL$. \[\mu^*: E^{*,*}(\MGL) \cong E^{*,*}\llbracket c_1, c_2,..,c_n,.. \rrbracket \to E^{*,*}(\MGL \wedge \MGL) \cong E^{*,*}\llbracket c'_1,c''_1, c'_2,c''_2,..,c'_n,c''_n.. \rrbracket\]
We have $\mu^* (c_i) = \Sigma_{(m+n = i)} c'_m c''_n$.\\[1.5ex] Therefore $\Psi^* := (\Phi \wedge \varphi)^* \mu^* : E^{*,*}(\MGL) \to E^{*,*}(\MSL \wedge \Sigma^{\infty-2, \infty-1} \mathds{P^{\infty}})$  takes $c_1$ to $c'_1$, $c_2$ to $c''_2$, and $c_n$ to $c'_1 c''_{n-1} + c''_n$. \\[1.000ex]
Here $E^{*,*}(\MSL \wedge \Sigma^{\infty-2, \infty-1} \mathds{P^{\infty}}) \cong E^{*,*} \llbracket c'_1,c''_2,c''_3,...\rrbracket$. As the map $\Psi^*$ is an $E^{*,*}$ module map, it is an isomorphism.\\
As we would like to have $\Psi \wedge E$ to be an equivalence, our interest is in looking at the dual map $$\Psi_{*}:E_{*,*}(\MSL \wedge \Sigma^{\infty-2, \infty-1} \mathds{P^{\infty}}) \to E_{*,*}(\MGL) $$ $E_{*,*}(\MSL \wedge \Sigma^{\infty-2, \infty-1} \mathds{P^{\infty}})$ is the filtered colimit of  $E_{*,*}$ duals of \[E^{*,*}(\SGr_{p,np}) \otimes_{E_{*,*}} E^{*,*}(\mathds{P}^{n'})\] 
$E^{*,*}(\SGr_{p,np})$, and $E^{*,*}(\mathds{P}^{n'})$ are flat, and finitely generated free $E^{*,*}$-modules respectively. Similarly $E_{*,*}(\MGL)$ is colimit of duals of finitely generated free modules of form $E^{*,*}(\Gr_{p, pn})$. The cohomology isomorphism $\Psi^*$ is induced by these corresponding maps at the finite stages. As the correspondimg $\nlim^1$ terms vanish, we take colimit over the induced maps between the canonically isomorphic duals. Using the fact that $E^{*,*}(\mathds{P}^{n'})$ is finitely generated free, we get the homology isomorphism we want.\\
We can summarize this subsection in the form of the following result.
\begin{lemma}\label{CellularMSLsmashEequivalence}
 Let $S$ be a Noetherian scheme of finite Krull dimension and $E \in \SH(S)$ be an oriented cohomology theory with additive formal group law. Then, the map $\Psi$ induces an isomorphism of the bigraded stable homotopy groups
 \[\pi_{p,q}(\MSL \wedge \Sigma^{\infty-2, \infty-1} \mathds{P^{\infty}} \wedge E) \xrightarrow[\cong]{\pi_{p,q}(\Psi \wedge E)} \pi_{p,q}(\MGL \wedge E)\]    
\end{lemma}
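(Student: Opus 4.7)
The plan is to deduce the $E$-homology isomorphism from an $E$-cohomology isomorphism via the duality set up in Proposition \ref{dualmsl}. All the ingredients are essentially in place from the previous subsections, so the proof is mostly a matter of assembling them carefully.

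First I would use the identifications already obtained. By Lemma \ref{OrientedCohomologyOfGrassmannian} together with the Thom isomorphism, $E^{*,*}(\MGL) \cong E^{*,*}\llbracket c_1,c_2,\ldots\rrbracket$; by Lemma \ref{first_computation} (combined with Thom isomorphism along the structure maps of $\MSL$), $E^{*,*}(\MSL) \cong E^{*,*}\llbracket c_2,c_3,\ldots\rrbracket$; and the standard computation for projective space gives $E^{*,*}(\Sigma^{-2,-1}\Sigma^\infty \mathds{P}^\infty) \cong E^{*,*}\llbracket c_1\rrbracket$. The maps $\Phi^*$, $\varphi^*$, and the coproduct $\mu^*$ on $E^{*,*}(\MGL)$ have been described explicitly, so I can read off that $\Psi^*$ sends $c_1 \mapsto c_1'$, $c_n \mapsto c_1' c''_{n-1} + c''_n$ for $n \geq 2$. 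A straightforward triangular/inductive argument then shows $\Psi^*$ is an isomorphism of $E^{*,*}$-modules onto $E^{*,*}\llbracket c_1', c_2'', c_3'', \ldots\rrbracket$.

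Next I would pass from cohomology to homology by duality. Since $\MSL$ and $\MGL$ are both cellular (Lemma \ref{cellularity MSL} and \cite[Theorem 6.4]{cellulardi}), expressed as sequential homotopy colimits of strongly dualizable spectra whose $E$-cohomologies are flat (here I use the flatness argument of Lemma \ref{bs1} for $\SGr_{n,np}$, and finite freeness for $\Gr_{n,np}$ and $\mathds{P}^n$), Proposition \ref{dualmsl} applies at every finite stage. The relevant inverse systems satisfy the Mittag-Leffler condition so that the $\lim^1$ terms vanish, justifying the interchange of limits and colimits carried out in Subsection \ref{EofLHS}. Hence $E_{*,*}(-)$ on both $\MSL \wedge \Sigma^{-2,-1}\Sigma^\infty \mathds{P}^\infty$ and $\MGL$ is canonically the continuous $E_{*,*}$-dual of the corresponding $E$-cohomology, and $\Psi_* = \Hom_{E_{*,*},c}(\Psi^*, E_{*,*})$ is therefore an isomorphism of bigraded $E_{*,*}$-modules.

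Finally, since $\pi_{p,q}(X \wedge E) = E_{p,q}(X)$ by definition, the previous step yields the claimed isomorphism on bigraded stable homotopy groups. The only step that requires genuine care is verifying the flatness and Mittag-Leffler hypotheses needed to invoke Proposition \ref{dualmsl}, because this is where the assumption of additive formal group law on $E$ is used (through Lemma \ref{first_computation}, to ensure the Gysin sequence splits into manageable pieces); the rest is formal manipulation of the explicit description of $\Psi^*$.
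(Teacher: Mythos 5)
Your proposal is correct and follows essentially the same route as the paper: compute $\Psi^*$ on $E$-cohomology via the explicit formulas for $\Phi^*$, $\varphi^*$, $\mu^*$, observe it is an isomorphism of $E^{*,*}$-power-series rings, and then dualize to $E$-homology using cellularity, flatness, and vanishing of $\lim^1$ (Proposition~\ref{dualmsl}). One small clarification: the additive formal group law hypothesis enters not only to secure the flatness/Mittag--Leffler hypotheses, but more centrally to produce the clean identification $E^{*,*}(\MSL)\cong E^{*,*}\llbracket c_2,c_3,\ldots\rrbracket$ in the first place (via Lemma~\ref{first_computation}), without which the explicit description of $\Psi^*$ that the whole argument rests on would not be available.
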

In particular for any cellular $E$, \cref{CellularMSLsmashEequivalence} implies that $\Psi \wedge E$ is an isomorphism in $\SH(S)$. As the map $\Psi \wedge E$ induces isomorphism on the global section of the homotopy sheaves, using the following \cref{checkStableEquivalence} by Morel, we get \cref{MSLsmashEequivalence}.
\begin{theorem}\cite[Example 2.3, Proposition 2.8]{MorelBook}\label{checkStableEquivalence}
   Let $k$ be a perfect field, and $f: F \to G$ be a morphism of strictly $\A^1-$invariant Nisnevich sheaves on $\text{Sm}_k$. Then $f$ is an isomorphism iff for every finitely generated field extension $k \hookrightarrow l$, the induced map $F(l) \to G(l)$ is an isomorphism.
\end{theorem}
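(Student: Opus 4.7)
The forward direction is immediate: if $f$ is an isomorphism of Nisnevich sheaves, every restriction $F(l) \to G(l)$ is in particular an isomorphism. For the converse, my plan is to exploit the strong structural constraints that strictly $\A^1$-invariant sheaves carry on $\text{Sm}_k$ for $k$ perfect, which let one recover the whole sheaf from its values on finitely generated field extensions of $k$. I would reduce the question to showing that $f_X : F(X) \to G(X)$ is an isomorphism for every henselian local essentially smooth $k$-scheme $X$, since such stalks detect isomorphisms of Nisnevich sheaves.

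The first step is to invoke Morel's fact that strictly $\A^1$-invariant Nisnevich sheaves on $\text{Sm}_k$ are \emph{unramified}: for any smooth connected $X$ with generic point $\eta_X$, the restriction $F(X) \to F(k(\eta_X))$ is injective. Since $k(\eta_X)$ is a finitely generated field extension of $k$, combining this with the hypothesis immediately yields injectivity of $f_X$ for every smooth connected $X$, and the statement then transfers to essentially smooth henselian local schemes by a filtered colimit argument.

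For surjectivity I would invoke the Rost--Schmid/Gersten-type resolution
\[ 0 \to F(X) \to F(\eta_X) \xrightarrow{\partial} \bigoplus_{x \in X^{(1)}} F_{-1}(k(x)) \to \bigoplus_{x \in X^{(2)}} F_{-2}(k(x)) \to \cdots \]
built functorially from $F$ together with its iterated contractions $F_{-n}$, each of which is again a strictly $\A^1$-invariant sheaf. Because contractions are computed sheaf-theoretically and every residue field $k(x)$ at a point of a finite-type $k$-scheme is finitely generated over $k$, the hypothesis propagates through the whole tower: $f$ induces an isomorphism $F_{-n}(k(x)) \to G_{-n}(k(x))$ for every $n$ and every such $x$. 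Comparing the Gersten complexes of $F$ and $G$ term-by-term then forces $F(X) \xrightarrow{\cong} G(X)$ for every smooth $X$, and henselian localization preserves this.

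The main obstacle is of course providing the two structural inputs---unramifiedness and the Gersten resolution for strictly $\A^1$-invariant sheaves. These are the deep theorems at the heart of Morel's book, established via his $\A^1$-connectivity theorem together with an intricate analysis of the behaviour of contractions on smooth local schemes with their closed points removed. Once these are granted, the isomorphism criterion becomes a purely formal consequence of comparing two resolutions whose terms depend only on values of $F$ and $G$ on finitely generated field extensions of $k$.
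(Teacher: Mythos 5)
The paper itself offers no proof of this statement: it is taken verbatim as a black box from Morel's book, so there is no ``paper's own proof'' to compare against. Your sketch is therefore best judged as a reconstruction of Morel's argument, and on that front you have the right architecture: reduce to Nisnevich stalks, use unramifiedness of strictly $\A^1$-invariant sheaves to get injectivity on smooth connected $X$ via the generic point, and use the Rost--Schmid/Gersten resolution for surjectivity. This is indeed how the result is established.

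The one step that is asserted rather than argued is the claim that the hypothesis ``propagates through the whole tower'' to give $F_{-n}(l)\xrightarrow{\cong}G_{-n}(l)$ for all finitely generated $l/k$. By definition $F_{-1}(l)$ is a direct summand of $F(\Gm\times\mathrm{Spec}\,l)$, and $\Gm\times\mathrm{Spec}\,l$ is not a field, so the hypothesis does not directly apply to it. The missing justification is a small bootstrap: $F_{-1}$ is again strictly $\A^1$-invariant (Morel), hence unramified, and $l(t)$ is finitely generated over $k$; the square
\[
\begin{tikzcd}[sep=small]
F_{-1}(l) \arrow[r,hook]\arrow[d,"f_{-1}"'] & F(l(t)) \arrow[d,"f","\cong"'] \\
G_{-1}(l) \arrow[r,hook] & G(l(t))
\end{tikzcd}
\]
shows $f_{-1}$ is injective on all such $l$. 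Injectivity of $f_{-1}$ already suffices to match the kernels of the residue maps $\partial_x\colon F(k(X))\to F_{-1}(k(x))$ inside $F(k(X))\cong G(k(X))$, and by the unramifiedness axiom $F(X)=\bigcap_{x\in X^{(1)}}\ker\partial_x$, which gives surjectivity of $f_X$ without invoking exactness of the full Gersten complex. (If you do want the stronger statement $F_{-n}\cong G_{-n}$ on fields, iterate: injectivity of $f_{-1}$ on residue fields together with unramifiedness of $F_{-1}$ forces $F(\Gm\times\mathrm{Spec}\,l)\cong G(\Gm\times\mathrm{Spec}\,l)$, hence $F_{-1}(l)\cong G_{-1}(l)$, and so on by induction.) So your proposal is correct in substance and matches the intended route, but that propagation step should be spelled out; as written it reads as circular, since contractions are precisely the place where the hypothesis on fields does not apply directly.
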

\begin{theorem}\label{MSLsmashEequivalence}
 Let $k$ be a perfect field, and $E \in \SH(k)$ be an oriented cohomology theory with additive formal group law. Then, the following map is an equivalence in $\SH(k)$
 \[\MSL \wedge \Sigma^{\infty-2, \infty-1} \mathds{P^{\infty}} \wedge E \xrightarrow[\cong]{\Psi \wedge E} \MGL \wedge E\]    
\end{theorem}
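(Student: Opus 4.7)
The plan is to upgrade the isomorphism on global bigraded stable homotopy groups provided by \cref{CellularMSLsmashEequivalence} to an isomorphism of the associated stable homotopy sheaves. Since $k$ is perfect, Morel's stable $\A^1$-connectivity ensures that these homotopy sheaves are strictly $\A^1$-invariant Nisnevich sheaves on $\SmS_k$, so by Jardine's detection principle \cite[Lemma 3.7]{Jardine2000} an isomorphism on all $\underline{\pi}_{p,q}$ already implies that $\Psi \wedge E$ is a stable equivalence in $\SH(k)$.

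To verify the sheaf-level isomorphism I invoke \cref{checkStableEquivalence}: it is enough to check that the induced map is an isomorphism on sections over every finitely generated field extension $k \hookrightarrow l$. For such an $l$, let $f \colon \mathrm{Spec}(l) \to \mathrm{Spec}(k)$ be the canonical morphism and $f^{\ast}\colon \SH(k) \to \SH(l)$ its symmetric monoidal pullback. The geometric constructions of $\MSL$, $\MGL$, $\mathds{P}^{\infty}$ and the maps $\Phi$, $\varphi$ that assemble $\Psi$ (see \cref{constructMGL}, \cref{MSL}, \cref{orientingMGL}) are all natural under base change, so $f^{\ast}(\Psi \wedge E)$ is canonically identified with $\Psi_{l} \wedge f^{\ast}E$, where $\Psi_{l}$ denotes the analogous comparison map built over $\mathrm{Spec}(l)$. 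Since $f^{\ast}E$ inherits an orientation whose formal group law remains additive, the hypotheses of \cref{CellularMSLsmashEequivalence} persist over the Noetherian, finite-Krull-dimensional scheme $\mathrm{Spec}(l)$. Applying that lemma over $\mathrm{Spec}(l)$ yields that $\Psi_l \wedge f^{\ast}E$ induces an isomorphism on $\pi_{p,q}$ computed in $\SH(l)$, for every bidegree $(p,q)$.

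The remaining step is to match these homotopy groups over $\mathrm{Spec}(l)$ with the sections at $l$ of the sheaves $\underline{\pi}_{p,q}^{\SH(k)}(\MSL \wedge \Sigma^{-2,-1}\Sigma^{\infty}\mathds{P}^{\infty} \wedge E)$ and $\underline{\pi}_{p,q}^{\SH(k)}(\MGL \wedge E)$. Writing $l$ as a cofiltered limit of affine opens of a smooth $k$-model of a transcendence basis, localized at the generic point, continuity of strictly $\A^1$-invariant Nisnevich sheaves under such cofiltered limits of affine schemes identifies the sections at $l$ with the corresponding stable homotopy groups after base change to $\SH(l)$. Combining these identifications over every finitely generated $l/k$ with \cref{checkStableEquivalence} then yields the desired isomorphism of sheaves, and hence the stable equivalence $\Psi \wedge E$.

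The delicate point will be the final bookkeeping: one has to check that the base-change identification transports the comparison map $\Psi \wedge E$ on the $k$-side to $\Psi_l \wedge f^{\ast}E$ in $\SH(l)$, so that \cref{CellularMSLsmashEequivalence} applied over $\mathrm{Spec}(l)$ really produces the isomorphism needed on sections. Once the naturality of the construction of $\Psi$ under base change is recorded in sufficient generality, no further computation is required — the proof is then a clean combination of \cref{CellularMSLsmashEequivalence} with Morel's detection theorem.
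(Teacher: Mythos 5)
Your proposal is correct and follows essentially the same route as the paper: deduce the isomorphism of bigraded stable homotopy groups from \cref{CellularMSLsmashEequivalence}, then promote it to an isomorphism of homotopy sheaves (and hence a stable equivalence, since $\underline{\pi}_{p,q}$ detects equivalences) via Morel's detection result \cref{checkStableEquivalence}, checking sections over every finitely generated field extension $l/k$. You have simply made explicit the base-change and continuity steps that the paper's terse proof leaves implicit: that $\Psi$ and the additive orientation on $E$ pull back compatibly along $\mathrm{Spec}(l)\to\mathrm{Spec}(k)$, that \cref{CellularMSLsmashEequivalence} applies over $\mathrm{Spec}(l)$, and that essentially smooth base change identifies sections of the homotopy sheaves at $l$ with stable homotopy groups in $\SH(l)$.
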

\begin{remark}
    The spectra $\text{MSU} \wedge \Sigma^{-2}\Sigma^{\infty} \mathbb{C P}^{\infty}$, and $ \text{MU}$ are both connective. In topology, due to Hurewicz theorem showing the map in \cref{msuCFIntro} induces an isomorphism on singular homology is enough to prove that the map is a stable homotopy equivalence.\\[1.00000ex]
    In $\A^1$-homotopy theory, we have Bachmann's Hurewicz theorem \cite[Theorem 16]{BachmannDUKE}, which is applicable on some selected cases. We can get that $\Psi$ is an equivalence over prime fields with odd characteristic using this. For more detail on this, and an Adams spectral sequence based approach showing $\Psi$ is an equivalence over characteristic $0$ fields, see the appendix of the author's thesis\cite[Appendix A]{MyThesis}.
\end{remark}
\subsection{The map $\Psi$ itself is an equivalence} \label[subsection]{finalproof}
We first show that both $\Psi [1/\eta]$, and $\Psi^{\wedge}_{\eta}$ are equivalences. Then $\eta$-fracture square \cref{etaFractureSquare} is used to finish the proof.
\begin{lemma} \cite[Lemma 3.9]{annals_stable}
    For every motivic spectrum $E$, and $f \in \pi_{s,t}(\mathds{1})$, we have the following pullback square
    \begin{equation} \label{etaFractureSquare}  
\begin{tikzcd}
	E && {E^{\wedge}_f} \\
	\\
	{E [1/f]} && {E^{\wedge}_f [1/f]}
	\arrow[from=1-1, to=1-3]
	\arrow[from=1-1, to=3-1]
	\arrow[from=3-1, to=3-3]
	\arrow[from=1-3, to=3-3]
	\arrow["\ulcorner"{anchor=center, pos=0.125, rotate=180}, draw=none, from=3-3, to=1-1]
\end{tikzcd}
 \end{equation}
\end{lemma}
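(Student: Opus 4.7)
The plan is to verify the square is cartesian by computing its total cofiber, which in the stable $\infty$-category $\SH(S)$ vanishes if and only if the square is cartesian. Let $C := \hocofib(E \to E^{\wedge}_f)$ denote the cofiber of the left vertical arrow. Since $(-)[1/f]$ is a smashing localization—that is, $X[1/f] \simeq X \wedge \mathds{1}[1/f]$—it is exact and preserves cofibers, so the cofiber of the right vertical arrow equals $C[1/f]$. The total cofiber of the square is therefore $\hocofib(C \to C[1/f])$, and the square is cartesian precisely when $C \to C[1/f]$ is an equivalence, equivalently when $C$ is $f$-local.

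To show $C$ is $f$-local, I would first verify $C^{\wedge}_f \simeq 0$. Applying $(-)^{\wedge}_f$ to the cofiber sequence $E \to E^{\wedge}_f \to C$ and invoking the idempotence of $f$-completion, $(E^{\wedge}_f)^{\wedge}_f \simeq E^{\wedge}_f$, the first map becomes an equivalence after completion, forcing $C^{\wedge}_f \simeq 0$. It then remains to upgrade this vanishing to the stronger statement that multiplication by $f$ is an equivalence on $C$, equivalently that $C \wedge \mathds{1}/f \simeq 0$.

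The main obstacle is this last upgrade, which reduces to showing that the natural map $E/f \to E^{\wedge}_f/f$ is an equivalence for an arbitrary motivic spectrum $E$; granted this, smashing the cofiber sequence $E \to E^{\wedge}_f \to C$ with $\mathds{1}/f$ immediately gives $C \wedge \mathds{1}/f \simeq 0$. The equivalence $E/f \simeq E^{\wedge}_f/f$ itself is a Milnor-type fact: since $\mathds{1}/f$ is compact in $\SH(S)$, smashing with it commutes with the defining limit $E^{\wedge}_f \simeq \nholim_n E/f^n$, and one checks that the resulting tower $\{(E/f^n) \wedge \mathds{1}/f\}_n$ is pro-equivalent to $E/f$ (with vanishing $\nlim^1$). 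This is precisely the arithmetic fracture argument in a presentable stable symmetric monoidal $\infty$-category, carried out for $\SH(S)$ in \cite[Lemma 3.9]{annals_stable}, which I would invoke as a black box after outlining the reduction above.
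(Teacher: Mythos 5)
The paper states this lemma with only a citation to \cite[Lemma 3.9]{annals_stable} and offers no proof of its own, so there is nothing internal to compare against; your outline is the standard fracture-square argument that the cited reference carries out, and its skeleton is sound. That said, two remarks.

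First, a substantive imprecision: you invoke compactness of $\mathds{1}/f$ to commute $- \wedge \mathds{1}/f$ past the inverse limit defining $E^{\wedge}_f$. Compactness of $K$ means $\Hom(K,-)$ preserves filtered colimits and says nothing about $K \wedge -$ commuting with limits. The correct input is that $\mathds{1}/f$ is \emph{dualizable}: it is the cofiber of a map between invertible objects $\Sigma^{s,t}\mathds{1} \to \mathds{1}$, hence a finite cell object, hence dualizable in $\SH(S)$. For a dualizable $K$ with dual $K^{\vee}$, one has $K \wedge - \simeq \underline{\Hom}(K^{\vee}, -)$, a right adjoint, so it preserves all limits. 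With this substitution the step is correct.

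Second, a structural comment: the paragraph in which you deduce $C^{\wedge}_f \simeq 0$ from idempotence of $f$-completion is not actually used. Your final argument obtains $C \wedge \mathds{1}/f \simeq 0$ directly by smashing the cofiber sequence $E \to E^{\wedge}_f \to C$ with $\mathds{1}/f$ and quoting the pro-equivalence $E/f \xrightarrow{\simeq} (E^{\wedge}_f)/f$; the intermediate observation that $C$ is $f$-complete-acyclic (which alone would not imply $C/f \simeq 0$) is a dead end. The real content, as you correctly identify, is the pro-constancy of $\{(E/f^n) \wedge \mathds{1}/f\}_n$, which rests on $f$ acting nilpotently on $\mathds{1}/f$; since you explicitly defer that verification to the cited reference, this is not a gap, but it is worth flagging that it is the actual crux and that everything else in the argument is formal.
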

\subsubsection{$\Psi [1/\eta]$ is an equivalence of contractible spectra}
We have discussed in \cref{etaMGL}, the unit map of $\MGL$ factors through $\mathds{1}/ \eta$, as $\MGL$ is the universal oriented cohomology theory, we get that inverting $\eta$ degenerates any oriented cohomology theory into a trivial theory. In particular, we have
\[\MGL [1/\eta] \cong *\]
It is enough to show that $\MSL \wedge \Sigma^{-2,-1} \Sigma^{\infty} \mathds{P}^{\infty} [1/\eta] \cong *$. 
\begin{proposition}\label{LevineIdeaForGeneralBase}
 Let $k$ be any field. In $\SH(k)$, we have the following equivalence \[(\MSL \wedge \Sigma^{-2,-1} \Sigma^{\infty} \mathds{P}^{\infty})[1/\eta] \cong *\]   
\end{proposition}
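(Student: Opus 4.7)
The strategy is to prove that both the target and the source of $\Psi$ become contractible after inverting $\eta$. For the target this is short: since $\MGL$ is oriented, the unit $\mathds{1} \to \MGL$ factors through $\mathds{1}/\eta$ by \cref{etaMGL}. After inverting $\eta$ the factoring object $(\mathds{1}/\eta)[1/\eta]$ is contractible (it is the cofiber of the now-invertible map $\eta$), so the unit of the ring spectrum $\MGL[1/\eta]$ is null and $\MGL[1/\eta] \cong \ast$.

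For the source I would use the skeletal filtration of \cref{skeletalIntro}. Setting $A_n := \MSL \wedge \Sigma^{-2,-1}\Sigma^{\infty}\mathds{P}^n$, we have $A_1 \cong \MSL$, $A_2 \cong \MSL/\eta$, and cofiber sequences
\[A_{n-1} \longrightarrow A_n \longrightarrow \Sigma^{2n-2,n-1}\MSL\]
for $n \geq 2$, with $\colim_n A_n \cong \MSL \wedge \Sigma^{-2,-1}\Sigma^{\infty}\mathds{P}^{\infty}$. After inverting $\eta$, $A_2[1/\eta] \cong \ast$ since the connecting map $\Sigma^{2,1}\MSL \to \Sigma^{1,0}\MSL$ is $\Sigma\eta$, an equivalence. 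I expect that inductively on $n$ (using that the cellular attaching maps of $\mathds{P}^n$ lie in $\pi_{1,1}(\mathds{1})$) one obtains $A_{2k}[1/\eta] \cong \ast$ for all $k \geq 1$, while $A_{2k+1}[1/\eta]$ is a suspension of $\MSL[1/\eta]$. Consequently every transition $A_n[1/\eta] \to A_{n+1}[1/\eta]$ is null, because at least one of its endpoints is contractible.

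A sequential colimit in a stable $\infty$-category whose transition maps are all null is itself contractible; the telescope formula
\[\colim_n A_n[1/\eta] \cong \cofib\Bigl(\textstyle\bigoplus_n A_n[1/\eta] \xrightarrow{\mathrm{id}-\mathrm{shift}} \bigoplus_n A_n[1/\eta]\Bigr)\]
reduces to $\cofib(\mathrm{id}) \cong \ast$ once every shift component vanishes. This gives $(\MSL \wedge \Sigma^{-2,-1}\Sigma^{\infty}\mathds{P}^{\infty})[1/\eta] \cong \ast$.

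The main obstacle is the inductive step: for even $n$ one must show the boundary $\Sigma^{2n-2,n-1}\MSL \to \Sigma^{1,0}A_{n-1}$ is an equivalence after $\eta$-inversion (for odd $n$ it is automatically null since $A_{n-1}[1/\eta]\cong\ast$ by induction). I would reduce this to the base case $n = 2$, where the boundary is literally $\Sigma \eta$ by the definition in \cref{constructEta}, using naturality of the filtration along a linear embedding $\mathds{P}^2 \hookrightarrow \mathds{P}^n$ that identifies the relevant two-cell subquotient $\mathds{P}^n/\mathds{P}^{n-2}$ with a suitable suspension of $\mathds{1}/\eta$.
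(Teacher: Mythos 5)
Your overall structure --- show that both source and target of $\Psi$ become contractible after $\eta$-inversion, and attack the source via the skeletal filtration of $\mathds{P}^\infty$ --- is sound, and the bookkeeping through $A_2[1/\eta]\cong *$ and the odd/even pattern is correct. The gap is in the mechanism you offer for the inductive step. A linear embedding $\mathds{P}^2\hookrightarrow\mathds{P}^n$ identifies $\mathds{P}^2$ with the $2$-skeleton of $\mathds{P}^n$, so for $n\geq 4$ it factors through $\mathds{P}^{n-2}$: naturality of the skeletal filtration along such an embedding only reproduces the bottom attaching maps and gives no information about the top subquotient $\mathds{P}^n/\mathds{P}^{n-2}$ (the composite $\mathds{P}^2\to\mathds{P}^n\to\mathds{P}^n/\mathds{P}^{n-2}$ is null). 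Nor is it enough to note that the attaching maps lie in $\pi_{1,1}(\mathds{1})\cong W(k)$; that group is large in general, and if a top attaching map were, say, zero the corresponding subquotient would split and survive $\eta$-inversion.

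What your argument actually requires is the identification of the boundary map $S^{2n,n}\to\Sigma S^{2n-2,n-1}$ in the cofiber sequence $S^{2n-2,n-1}\to\mathds{P}^n/\mathds{P}^{n-2}\to S^{2n,n}$ as $\eta$ for even $n$ and zero for odd $n$. This is precisely \cref{AFeTA}, quoted from Asok--Fasel, which the paper invokes later in \cref{Some applications} to compute $d_1$-differentials; it is not a naturality statement. Granting it, your induction closes: $\mathds{P}^{2k}/\mathds{P}^{2k-2}$ is a suspension of $\mathds{1}/\eta$, hence $A_{2k}[1/\eta]\cong A_{2k-2}[1/\eta]\cong\cdots\cong A_2[1/\eta]\cong *$, and since the even skeleta are cofinal you can dispense with the telescope argument about null transition maps (and with the $\MSL$ factor entirely --- it suffices to show $\Sigma^\infty\mathds{P}^\infty[1/\eta]\cong *$). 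The paper's own proof avoids the attaching-map input by a geometric shortcut: the open complement of a linear $\mathds{P}^1\hookrightarrow\mathds{P}^{2n}$ is a vector bundle over $\mathds{P}^{2n-2}$, so by homotopy purity $\mathds{P}^{2n}/\mathds{P}^{2n-2}$ is a Thom space over $\mathds{P}^1$ that is then seen to vanish after inverting $\eta$, yielding $\Sigma^\infty\mathds{P}^{2n}[1/\eta]\cong *$ by induction without appealing to Asok--Fasel.
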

\begin{proof}
Let's consider the canonical closed embedding $\mathbb{P}^1 \xrightarrow{i}\mathbb{P}^{2n}$. The normal bundle of this has the form $\mathcal{N}(i) \cong \mathcal{O}(1)^{\oplus (2n-1)}_{\mathbb{P}^1}$. Its open complement is a vector bundle over $\mathbb{P}^{2n-2}$, whose zero section gives an $\mathds{A}^1$-equivalence $\mathbb{P}^{2n-2} \cong \mathbb{P}^{2n} \backslash i(\mathbb{P}^{1})$. \\[1.25ex]
In particular we have the cofiber sequence in $\mathcal{H}_{*}(k)$ 
\[\mathbb{P}^{2n-2} \hookrightarrow \mathbb{P}^{2n} \to Th_{\mathbb{P}^{1}}( \mathcal{O}(1)^{\oplus (2n-1)}) \cong \underbrace{Th_{\mathbb{P}^{1}}( \mathcal{O}(1)) \wedge Th_{\mathbb{P}^{1}}( \mathcal{O}(1)) \wedge .... \wedge Th_{\mathbb{P}^{1}}( \mathcal{O}(1)) }_{(2n-1) \text{times}} \]
Following \cite[Proposition 2.2]{thetacharacteristic}, In $\SH(k)$, for some field $k$, $Th_{X}( V) \cong Th_{X}( V^{\vee})$. Here $X$ is quasi-projective, and $V^{\vee}$ is the dual bundle of $V$. Now, we get the following cofiber sequence in $\SH(k)$ 
\[\Sigma^{\infty}\mathbb{P}^{2n-2} \hookrightarrow \Sigma^{\infty}\mathbb{P}^{2n} \xhookrightarrow{\cong} \underbrace{\Sigma^{\infty} Th_{\mathbb{P}^{1}}( \mathcal{O}(-1)) \wedge \Sigma^{\infty} Th_{\mathbb{P}^{1}}( \mathcal{O}(-1)) \wedge .... \wedge \Sigma^{\infty} Th_{\mathbb{P}^{1}}( \mathcal{O}(-1)) }_{(2n-1) \text{times}}\]
Using the cofiber sequence $\mathcal{O}(-1) \backslash \mathbb{P}^{1} \cong \mathds{A}^2 \backslash \{0\} \xrightarrow{\eta} \mathbb{P}^{1} \to Th_{\mathbb{P}^{1}}( \mathcal{O}(-1) \cong \mathbb{P}^{2}$, we get the following cofiber sequence in $\SH(k)$ \begin{equation} \label{ProjectiveEmbedding}
    \Sigma^{\infty}\mathbb{P}^{2n-2} \hookrightarrow \Sigma^{\infty}\mathbb{P}^{2n} \to  \underbrace{\Sigma^{\infty} \mathbb{P}^{2}  \wedge \Sigma^{\infty} \mathbb{P}^{2} \wedge .... \wedge \Sigma^{\infty} \mathbb{P}^{2} }_{(2n-1) \text{times}}
\end{equation}
As $\Sigma^{-2,-1}\Sigma \mathbb{P}^{2} \cong \mathds{1}/\eta$, $\Sigma^{-2,-1}\Sigma \mathbb{P}^{2} \cong *$ in $\SH(k)[1/\eta]$. Using \cref{ProjectiveEmbedding}, by induction on $n$, $\Sigma \mathbb{P}^{2n} \cong *$. So, we get $\Sigma \mathbb{P}^{\infty} \cong *$ in $\SH(k)[1/\eta]$.
\end{proof}
\begin{remark}
    Haution in \cite[Proposition 6.11 (ii)]{haution_2023} has shown that for any Noetherian base scheme $S$ of finite Krull dimension $\Sigma ^{\infty} Th_{\mathds{P}^{\infty}}(\mathcal{O}(n)) \cong * $ in $\SH(S)[1/\eta]$, whenever $n$ is odd. In particular we have $\Sigma^{\infty} (\mathds{P}^{\infty}, \infty) \cong \Sigma ^{\infty} Th_{\mathds{P}^{\infty}}(\mathcal{O}(-1))$ is contractible in $\SH(S)[1/\eta]$.\\[1.00000ex] 
    In fact, the assertion of \cref{LevineIdeaForGeneralBase} is true over any Noetherian base scheme $S$ of finite Krull dimension.
\end{remark}
\subsubsection{$\Psi^{\wedge}_{\eta}$ is an equivalence}

We have a straightforward way of showing $\Psi^{\wedge}_{\eta}$ is an equivalence by comparing completion with respect to $\eta$, $\HZ$ and $\MGL$. This works over any perfect field as outlined below. The author learned this from Oliver R\"ondigs.
\begin{proposition}\label[proposition]{PsiIntegralActual}
  We have the following equivalence over any perfect field \[(\MSL \wedge \Sigma^{-2,-1} \Sigma^{\infty} \mathds{P}^{\infty})^{\wedge}_{\eta} \xrightarrow{\Psi^{\wedge}_{\eta}} \MGL\]
\end{proposition}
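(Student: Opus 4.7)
The plan is to reduce $\eta$-completion to $\HZ$-completion, for which the answer is already known from \cref{MSLsmashEequivalence}. Since $\HZ$ is an oriented ring spectrum with additive formal group law, that theorem gives $\Psi \wedge \HZ$ as an equivalence. Smashing further with $\HZ^{\wedge n}$ preserves this, so $\Psi \wedge \HZ^{\wedge (n+1)}$ is an equivalence for all $n \geq 0$; taking the limit of the cobar construction then yields that $\Psi^{\wedge}_{\HZ}$ is an equivalence in $\SH(k)$.

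To match $\HZ$-completion with $\eta$-completion, I would invoke \cref{completion} and \cref{completion2}. Both the source $\MSL \wedge \Sigma^{-2,-1}\Sigma^{\infty}\mathds{P}^{\infty}$ and the target $\MGL$ are connective, hence bounded below, and over a perfect field Morel's stable $\A^1$-connectivity property gives the left completeness of the homotopy $t$-structure required by these theorems. Applying \cref{completion2} with $f = \eta$ yields an equivalence $(-)^{\wedge}_{\eta} \simeq (-)^{\wedge}_{\underline{\pi}_0(\mathds{1}/\eta)}$ on bounded-below spectra, and \cref{completion} applied to $E = \HZ$ yields $(-)^{\wedge}_{\HZ} \simeq (-)^{\wedge}_{\underline{\pi}_0(\HZ)}$ once the idempotence of $\underline{\pi}_0(\HZ)$ is in place.

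The two completions therefore coincide provided $\underline{\pi}_0(\HZ) \cong \underline{\pi}_0(\mathds{1}/\eta)$ as commutative algebras in $\SH(k)^{\heartsuit}$. I would establish this via the composite $\mathds{1}/\eta \to \MGL \to \HZ$, where the first map comes from \cref{etaMGL} and induces an isomorphism on $\underline{\pi}_0$ by the discussion following that remark, and the second is the orientation map from \cref{uniMGL} associated with the additive formal group law on $\HZ$, inducing an isomorphism on $\underline{\pi}_0$ since $s_0(\MGL) \simeq \HZ \simeq s_0(\HZ)$ (\cref{SlicesMGL}). Idempotence of $\underline{\pi}_0(\HZ)$ then transports from that of $\underline{\pi}_0(\mathds{1}/\eta)$, which is itself an instance of \cref{constructIdempotents} applied to $L_1 = \Sigma^{1,1}\mathds{1}$ with $x_1 = \eta$. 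Assembling the pieces, $\Psi^{\wedge}_{\eta} \simeq \Psi^{\wedge}_{\HZ}$ is an equivalence; and since $\MGL[1/\eta] \simeq *$ from the preceding subsection, $\MGL$ is already $\eta$-complete, so the target of $\Psi^{\wedge}_{\eta}$ is $\MGL$ as stated. The most delicate step is the heart-level identification $\underline{\pi}_0(\HZ) \cong \underline{\pi}_0(\mathds{1}/\eta)$ together with the transfer of idempotence; everything else reduces to formal manipulation once the two comparison theorems are invoked.
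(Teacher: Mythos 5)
Your overall strategy matches the paper's: identify $\eta$-completion with $\HZ$-completion via \cref{completion} and \cref{completion2}, using the idempotence of the relevant object in the heart, and then feed in the equivalence $\Psi\wedge\HZ$ from \cref{MSLsmashEequivalence}. The step where the argument diverges, however, contains a genuine gap. You claim that the orientation map $\MGL\to\HZ$ induces an isomorphism on $\underline{\pi}_0$ \emph{because} $s_0(\MGL)\simeq\HZ\simeq s_0(\HZ)$. This inference is not valid: the slice filtration and the homotopy $t$-structure are distinct filtrations on $\SH(k)$, and an equivalence of zeroth slices does not on its own give an isomorphism of homotopy sheaves $\underline{\pi}_0$. (Indeed, $s_0(\mathds{1})\simeq\HZ\simeq s_0(\HZ)$, yet $\underline{\pi}_0(\mathds{1})\cong\underline{K}^{MW}\ne\underline{K}^M\cong\underline{\pi}_0(\HZ)$.) To make this work you would need to additionally argue that the $1$-effective covers $f_1\MGL$ and $f_1\HZ$ lie in $\SH(k)_{\geq 1}$, so that $\underline{\pi}_0(X)\cong\underline{\pi}_0(s_0X)$ for $X\in\{\MGL,\HZ\}$, \emph{and} that the orientation map becomes the identity on $s_0$ under the identifications; neither is stated or established.

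The paper sidesteps the $\MGL\to\HZ$ detour entirely. It invokes \cite[Theorem~7.4]{Hoyoismalgmot} for the truncation statement $(\mathds{1}/\eta)_{\leq 0}\xrightarrow{\cong}\HZ_{\leq 0}$ directly in the homotopy $t$-structure, and combines this with Morel's computation $\underline{\pi}_0(\mathds{1})\cong\underline{K}^{MW}$ and $\underline{K}^{MW}/\eta\cong\underline{K}^{M}$ to get the explicit identification $\underline{\pi}_0(\HZ)\cong\underline{\pi}_0(\mathds{1}/\eta)\cong\underline{K}^M$. The idempotence of $\underline{K}^M\cong\underline{\pi}_0(\mathds{1}/\eta)$ then follows from \cref{constructIdempotents} exactly as you propose. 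If you replace your slice-based justification by a citation to \cite[Theorem~7.4]{Hoyoismalgmot}, the rest of your argument goes through and reproduces the paper's proof.
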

\begin{proof} 
From \cite[Theorem 7.4]{Hoyoismalgmot}, taking the base to be a field $k$, the unit of $\HZ$, $\mathds{1} \to \HZ$ induces an isomorphism $(\mathds{1}/{\eta})_{\leq 0} \xrightarrow{\cong} \HZ_{\leq 0} $. Morel \cite{MorelPiZero} computed $\underline{\pi}_0 (\mathds{1})$ over a field. He showed that $\underline{\pi}_0 (\mathds{1}) \cong \underline{K}^{MW}$.\\[1.12ex]
Here $\underline{K}^{MW}$ denotes Milnor-Witt $K$-theory. Using the relation $\underline{K}^{MW}/\eta \cong \underline{K}^{M}$, as a corollary we get, $\HZ$ is connective and \[\underline{\pi}_0 (\HZ) \cong \underline{\pi}_0 (\mathds{1}/\eta) \cong \underline{K}^{M}\]
From \cref{etaMGL}, over a general base scheme, we have $(\mathds{1}/{\eta})_{\leq 0} \xrightarrow{\cong} \MGL_{\leq 0} $. In particular, over a field 
\[\underline{\pi}_0 (\MGL) \cong \underline{\pi}_0 (\mathds{1}/\eta) \cong \underline{K}^{M}\]
Here $\underline{K}^{M} \in \text{CAlg}(\SH(k)^{\heartsuit})$ denotes Milnor $K$-theory. As $\underline{K}^{M} \cong \underline{\pi}_0 (\mathds{1}/\eta)$, it is an idempotent using \cite[Remark 7.2.4]{mantovani}. In other words, $\underline{K}^{M} \otimes \underline{K}^{M} \to \underline{K}^{M} \in \SH(k)^{\heartsuit}$ is an isomorphism. Here we are considering $\SH(k)$ with the homotopy $t$-structure (see \cref{definitionHomotopyTstructure}).\\[1.2ex]
From \cref{completion} and \cref{completion2}, for $X$, a connective spectrum in $\SH(k)$,
\begin{equation}\label{CompareCompletion}
 X^{\wedge}_{\MGL} \cong X^{\wedge}_{\eta} \cong X^{\wedge}_{\HZ}   
\end{equation}
$\MGL$ is trivially $\MGL$ complete, and therefore $\eta$ complete.\\
Now, using \cref{MSLsmashEequivalence}, over a perfect field, we have \[\MSL \wedge \Sigma^{-2,-1} \Sigma^{\infty} \mathds{P}^{\infty} \wedge \HZ \xrightarrow[\cong]{\Psi \wedge \HZ} \MGL \wedge \HZ\]
Therefore we have $\Psi^{\wedge}_{\HZ}$ is an equivalence. Using \cref{CompareCompletion}, $\Psi^{\wedge}_{\eta}$ is an equivalence over perfect fields. 
\end{proof}
\subsubsection{$\Psi$ is an equivalence}
\begin{theorem} \label[theorem]{onlyresult}
    Let $k$ be a perfect field. The map \[\Psi: \MSL \wedge \Sigma^{-2,-1} \Sigma^{\infty} \mathds{P}^{\infty} \to \MGL\] is an equivalence in $\SH(k)$.
\end{theorem}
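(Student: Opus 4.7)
The plan is to assemble the three preparatory results already in hand by means of the $\eta$-fracture square. By \cref{etaFractureSquare}, applied to both the domain $M := \MSL \wedge \Sigma^{-2,-1}\Sigma^\infty \mathds{P}^\infty$ and the codomain $\MGL$ (with $f = \eta$), we obtain a map of homotopy pullback squares induced by $\Psi$. To conclude that $\Psi$ itself is an equivalence it suffices to check that the induced maps on the other three vertices of each square are equivalences.

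First I would handle the top-right vertices: the map $\Psi^{\wedge}_{\eta} : M^{\wedge}_{\eta} \to \MGL^{\wedge}_{\eta} \cong \MGL$ is an equivalence by \cref{PsiIntegralActual}. (Note that $\MGL$ is already $\eta$-complete because its unit map factors through $\mathds{1}/\eta$, as recalled in \cref{etaMGL}, so smashing with $\MGL$ kills $\eta$-local phenomena.) Next, for the bottom-left vertices, both $M[1/\eta]$ and $\MGL[1/\eta]$ are contractible: the second vanishes because $\MGL$ is orientable so $\eta$ acts by zero, and the first is exactly the content of \cref{LevineIdeaForGeneralBase}. Hence $\Psi[1/\eta]$ is tautologically an equivalence of zero objects. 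The bottom-right vertices are then also zero, so the induced map there is an equivalence as well.

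Putting this together in the map of squares
\begin{equation*}
\begin{tikzcd}[sep=small]
M \ar[r]\ar[d]\ar[rrr,"\Psi"] & M^{\wedge}_\eta \ar[d]\ar[rrr,"\Psi^{\wedge}_\eta"] & & \MGL \ar[r]\ar[d] & \MGL^{\wedge}_\eta \ar[d] \\
M[1/\eta] \ar[r] & M^{\wedge}_\eta[1/\eta] \ar[rrr,crossing over] & & \MGL[1/\eta] \ar[r] & \MGL^{\wedge}_\eta[1/\eta]
\end{tikzcd}
\end{equation*}
we see that three corners of the induced map of pullback squares are equivalences, so the map on the pullbacks — which is $\Psi$ — is an equivalence as well. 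This uses only the formal property that pullbacks in $\SH(k)$ preserve levelwise equivalences, together with the fact that $M$ and $\MGL$ are each recovered as the pullback of their own fracture square.

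I do not expect a genuine obstacle here: all the real content has been packaged into \cref{MSLsmashEequivalence,LevineIdeaForGeneralBase,PsiIntegralActual}. The only thing to be careful about is that the $\eta$-fracture square genuinely applies to both $M$ and $\MGL$ (which it does, by \cref{etaFractureSquare} applied with $f=\eta$), and that the naturality of the square with respect to $\Psi$ is what lets us pass from equivalences on corners to an equivalence on the pullback. Since perfectness of $k$ is used in \cref{MSLsmashEequivalence} and \cref{PsiIntegralActual}, the hypothesis on $k$ is only needed to invoke those inputs.
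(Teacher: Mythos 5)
Your proof is correct and follows essentially the same route as the paper: apply the $\eta$-fracture square of \cref{etaFractureSquare}, use \cref{LevineIdeaForGeneralBase} for the $\eta$-inverted corners and \cref{PsiIntegralActual} for the $\eta$-complete corners, and read off that $\Psi$ is an equivalence of pullbacks. The only small wrinkle is your parenthetical claim that $\MGL$ is $\eta$-complete because its unit factors through $\mathds{1}/\eta$ — that factorization alone does not directly yield $\eta$-completeness; the paper's cleaner reasoning (inside the proof of \cref{PsiIntegralActual}) is that $\MGL$ is trivially $\MGL$-complete and $\MGL$-completion coincides with $\eta$-completion for connective spectra, which is what lets $\MGL^{\wedge}_\eta$ be identified with $\MGL$ in the first place.
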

\begin{proof}
    Combining the equivalence in \cref{PsiIntegralActual}, \cref{LevineIdeaForGeneralBase} and $\eta$- fracture square \cref{etaFractureSquare} for $\MSL \wedge \Sigma^{-2,-1} \Sigma^{\infty} \mathds{P}^{\infty}$, We have the following pullback square 
\[\begin{tikzcd}[column sep=tiny]
	{\MSL \wedge \Sigma^{-2,-1} \Sigma^{\infty} \mathds{P}^{\infty}} && {} && {(\MSL \wedge \Sigma^{-2,-1} \Sigma^{\infty} \mathds{P}^{\infty})^{\wedge}_{\eta}} & \MGL \\
	\\
	{(\MSL \wedge \Sigma^{-2,-1} \Sigma^{\infty} \mathds{P}^{\infty})[1/\eta]} &&&& {(\MSL \wedge \Sigma^{-2,-1} (\Sigma^{\infty} \mathds{P}^{\infty})^{\wedge}_{\eta})[1/\eta]} & {\MGL[1/\eta]} \\
	{*} &&&&& {*}
	\arrow[from=1-1, to=1-5]
	\arrow[from=1-1, to=3-1]
	\arrow[from=3-1, to=3-5]
	\arrow[from=1-5, to=3-5]
	\arrow["\cong", from=1-5, to=1-6]
	\arrow["\cong", from=3-5, to=3-6]
	\arrow["\cong", from=3-1, to=4-1]
	\arrow["\cong", from=3-6, to=4-6]
	\arrow["\ulcorner"{anchor=center, pos=0.125, rotate=180}, draw=none, from=3-5, to=1-3]
\end{tikzcd}\]
We immediately get that the top horizontal map \[\MSL \wedge \Sigma^{-2,-1} \Sigma^{\infty} \mathds{P}^{\infty} \to (\MSL \wedge \Sigma^{-2,-1} \Sigma^{\infty} \mathds{P}^{\infty})^{\wedge}_{\eta}\] is an equivalence in $\SH(k)$ and we are done.
\end{proof}
Now we show a generalization of \cref{onlyresult} as promised in the introduction.
We use the following less general and simplified form of \cite[B.2]{normsinmotivichomotopy}. For $E \in \SH(\Z)$, let's denote $E_{\Z/p\Z} \in \SH(\Z/p\Z)$ be the pullback of $E$ along a closed point $(p) \in \text{Spec}(\Z)$, for some prime $p$, and $E_{\Q} \in \SH(\Q)$ be the  pullback of $E$ along the generic point $(0) \in \text{Spec}(\Z)$.
\begin{lemma}\label{detecting0inSHZ}
  Let $E \in \SH(\Z)$. Then $E \cong 0 \in \SH(\Z)$ iff $E_{\Z/p\Z} \cong 0$ in $\SH(\Z/p\Z)$, and $E_{\Q} \cong 0$ in $\SH(\Q)$.
\end{lemma}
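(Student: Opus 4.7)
The ``only if'' direction is immediate: each base-change functor $\SH(\Z) \to \SH(\Q)$ and $\SH(\Z) \to \SH(\Z/p\Z)$ is a symmetric monoidal left adjoint and hence sends the zero object to the zero object. For the converse, my plan is to invoke the conservativity result \cite[Proposition B.2]{normsinmotivichomotopy}: the family of pullback functors along the closed points $\text{Spec}(\Z/p\Z) \hookrightarrow \text{Spec}(\Z)$ (for all primes $p$) together with the pullback along the generic point $\text{Spec}(\Q) \hookrightarrow \text{Spec}(\Z)$ is jointly conservative on $\SH(\Z)$. Applied to $E$, the vanishing hypotheses force $E \cong 0$ immediately.

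If one wished to unpack that conservativity statement in this specific case rather than cite it as a black box, the strategy combines two standard tools. The first is the closed-open recollement in $\SH(\Z)$: for each prime $p$, writing $i_p : \text{Spec}(\Z/p\Z) \hookrightarrow \text{Spec}(\Z)$ for the closed point and $j_p : \text{Spec}(\Z[1/p]) \hookrightarrow \text{Spec}(\Z)$ for its open complement, the associated localization triangle shows that $E \cong 0$ if and only if $i_p^* E \cong 0$ and $j_p^* E \cong 0$. Iterating this over a finite set $S$ of primes, and using that $i_p^* E \cong 0$ holds for every $p \in S$ by hypothesis, the vanishing $E \cong 0$ is successively reduced to $E_{\Z[S^{-1}]} \cong 0$ for each finite $S$.

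The second tool is continuity of $\SH$ along cofiltered limits of schemes with affine transition maps, which gives a natural equivalence $\SH(\Q) \simeq \ncolim_S \SH(\Z[S^{-1}])$ taken over the filtered poset of finite sets $S$ of primes. The hypothesis $E_{\Q} \cong 0$ then translates to the statement that each $E_{\Z[S^{-1}]}$ becomes zero after passage to this filtered colimit of presentable $\infty$-categories. The main obstacle is to promote this ``vanishing in the colimit'' to actual vanishing $E_{\Z[S^{-1}]} \cong 0$ at a finite stage, since a priori a nonzero object can become zero in such a colimit. Resolving this step is the substantive content of \cite[Proposition B.2]{normsinmotivichomotopy}; the argument there carefully tracks compact generators under base change to ensure that the finite-stage vanishing can in fact be detected from the generic fibre, thereby closing the recollement induction and producing $E \cong 0$.
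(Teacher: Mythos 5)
Your proposal is correct, and the ingredients---closed/open recollement (Morel--Voevodsky localization) and continuity of $\SH$ along cofiltered limits of schemes---are the same ones the paper relies on; what differs is the order in which they are combined. The paper first invokes the stalkwise principle to reduce to $\SH(\Z_{(p)})$ for each prime $p$. Since $\Spec(\Z_{(p)})$ has exactly one closed point $\Spec(\Z/p\Z)$ and one generic point $\Spec(\Q)$, a single gluing triangle $j_{\#}j^*E \to E \to i_*i^*E$ then finishes immediately: both ends vanish by the hypotheses on $E_{\Z/p\Z}$ and $E_{\Q}$. Your route instead runs the recollement globally over $\Spec(\Z)$, peeling off finitely many closed points to reduce $E \cong 0$ to $E_{\Z[S^{-1}]} \cong 0$, and then brings in continuity $\SH(\Q) \simeq \ncolim_S \SH(\Z[S^{-1}])$ to connect with the hypothesis at the generic point. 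You correctly flag that the substantive difficulty is promoting ``vanishes in the filtered colimit of categories'' to ``vanishes at a finite stage'' and delegate it to \cite[Proposition B.2]{normsinmotivichomotopy}. It is worth noting that the paper's opening step, ``we can check contractibility of an object stalkwise,'' is exactly the same difficulty in a different guise: $\Z_{(p)}$ is not an open subscheme of $\Z$, so that reduction also needs Zariski descent plus a continuity argument of the same flavour. Thus both proofs defer the same hard input to the same place; the paper's arrangement just localizes first so that the recollement terminates in a single triangle rather than requiring an iterated reduction, which makes the proof read as more self-contained than it is.
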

\begin{proof}
    As we can check contractibility of an object stalkwise, it is enough to consider the spectrum in the category $\SH(\Z_{(p)})$. We have the $(0) \xhookrightarrow{j} \text{Spec}(\Z_{(p)}) \xhookleftarrow{i} (p)$, where $i$ is a closed immersion, and $j$ is the open immersion of its open complement. Using Morel-Voevodsky localization theorem \cite[p.114, Theorem 2.21]{MV99}, we have the cofiber sequence \[j_{\#}j^* E \to E \to i_*i^* E\]
    By assumption $i^* E \cong 0$, and $j^* E \cong 0 $. 
\end{proof}
\begin{corollary} \label{onlyresultZ}
   Let $S$ be a Noetherian scheme of finite Krull dimension.The map $\Psi: \MSL \wedge \Sigma^{-2,-1} \Sigma^{\infty} \mathds{P}^{\infty} \to \MGL$ is an equivalence in $\SH(S)$. 
\end{corollary}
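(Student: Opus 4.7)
The plan is to reduce the general statement to the case of a perfect field, which has already been handled in \cref{onlyresult}. The main tool is the compatibility of the map $\Psi$ with pullback along morphisms of base schemes, together with \cref{detecting0inSHZ}.

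First I would check that the map $\Psi$ is natural under base change: both $\MSL$ and $\MGL$ are defined as colimits of Thom spaces of tautological bundles on (special) Grassmannians, which are themselves defined over $\Spec(\Z)$, so the pullback functor $f^* : \SH(T) \to \SH(S)$ along any morphism $f : S \to T$ of Noetherian finite-dimensional schemes sends $\MSL_T$ to $\MSL_S$, $\MGL_T$ to $\MGL_S$, and $\Sigma^{\infty}\mathds{P}^{\infty}_T$ to $\Sigma^{\infty}\mathds{P}^{\infty}_S$. The orientation map $\varphi$ and the canonical ring spectrum map $\Phi$ are both defined via universal geometric constructions, hence are compatible with $f^*$. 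Consequently, $f^*(\Psi_T) \cong \Psi_S$ in $\SH(S)$. In particular, if we let $\pi : S \to \Spec(\Z)$ denote the structure morphism, then it suffices to prove the statement in $\SH(\Z)$, because $\pi^*$ sends equivalences to equivalences.

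Next, let $C := \cofib(\Psi)$ in $\SH(\Z)$. Showing $\Psi$ is an equivalence is equivalent to showing $C \cong 0$ in $\SH(\Z)$. By \cref{detecting0inSHZ}, this reduces to showing $C_{\Q} \cong 0$ in $\SH(\Q)$ and $C_{\Z/p\Z} \cong 0$ in $\SH(\Z/p\Z)$ for every prime $p$. Since pullback along $\Spec(\Q) \to \Spec(\Z)$ and $\Spec(\Z/p\Z) \to \Spec(\Z)$ is exact and compatible with cofibers, $C_{\Q} \cong \cofib(\Psi_{\Q})$ and $C_{\Z/p\Z} \cong \cofib(\Psi_{\Z/p\Z})$. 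Now $\Q$ and $\Z/p\Z$ are perfect fields, so \cref{onlyresult} applies and yields that both $\Psi_{\Q}$ and $\Psi_{\Z/p\Z}$ are equivalences, hence their cofibers vanish.

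Combining these two steps, $C \cong 0$ in $\SH(\Z)$, and therefore $\Psi$ is an equivalence in $\SH(\Z)$. Pulling back along $\pi : S \to \Spec(\Z)$ gives that $\Psi$ is an equivalence in $\SH(S)$, as required. The only nontrivial point is the base-change naturality of $\Psi$, which is essentially a bookkeeping exercise: all ingredients ($\MSL$, $\MGL$, the orientation $\varphi$, the canonical map $\Phi$, the ring structure $\mu$) are manufactured from the Grassmannian model over $\Spec(\Z)$ and therefore behave well under pullback. With that observation in hand, the corollary is a clean consequence of \cref{onlyresult} and \cref{detecting0inSHZ}.
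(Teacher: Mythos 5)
Your proof is correct and follows essentially the same route as the paper: reduce to $\SH(\Z)$ via base-change compatibility of $\Psi$, invoke \cref{detecting0inSHZ} to pass to the residue fields $\Q$ and $\Z/p\Z$, and then apply \cref{onlyresult} since these are perfect fields. Your write-up is somewhat more explicit about why the base-change naturality holds, but the underlying argument is the same.
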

\begin{proof}
   Consider a scheme $S$ and $s$ is a point on $S$. By abuse of notation, let $E_s$ denote the pullback of $E \in \SH(S)$ to $\SH(\kappa(s))$.\\[1.00000ex] 
   In particular we can consider the map $\Psi: \MSL \wedge \Sigma^{-2,-1} \Sigma^{\infty} \mathds{P}^{\infty} \to \MGL$ over $Spec(\Z)$. To show that the homotopy cofiber of the map, $\hocofib(\Psi)$ is $0 \in \SH(\Z)$, using \cref{detecting0inSHZ}, it is enough to show that $\hocofib(\Psi)_{\Q}$ is $0 \in \SH(\Q)$, and $\hocofib(\Psi)_{\Z/p\Z}$ is $0 \in \SH(\Z/p\Z)$. Using \cref{onlyresult}, we know that $\hocofib(\Psi)_{k}$ is $0 \in \SH(k)$ for any perfect field $k$. So, we have that the map $\Psi$ is an equivalence when constructed in $\SH(\Z)$.\\[1.000000ex]
 As Grassmannians behave well with base change, we have $\MSL$, $\MGL$, $\Sigma^{\infty} \mathds{P}^{\infty}$ and the map $\Psi$ are compatible with respect to pullback along $S \to \text{Spec}(\Z)$, and hence we are done.
\end{proof}
\section{Some applications} \label{Some applications}
\subsection{Stable homotopy groups of $\MSL$}
\cref{onlyresultZ} gives us the following filtration:
\begin{equation}\label{skeletal}
 \begin{tikzcd}
	{\MSL } && {\MSL \wedge \Sigma^{-2,-1} \Sigma^{\infty} \mathds{P}^2} && {\MSL \wedge \Sigma^{-2,-1} \Sigma^{\infty} \mathds{P}^3} & {...} & \MGL \\
	&& {\Sigma^{2,1} \MSL} && {\Sigma^{4,2} \MSL}
	\arrow[hook, from=1-1, to=1-3]
	\arrow[hook, from=1-3, to=1-5]
	\arrow[hook, from=1-5, to=1-6]
	\arrow[from=1-6, to=1-7]
	\arrow[from=1-3, to=2-3]
	\arrow[from=1-5, to=2-5]
\end{tikzcd}   
\end{equation}
In the second row, we have shown the homotopy cofiber of the map, for $k \in \N$ \[\MSL \wedge \Sigma^{-2,-1} \Sigma^{\infty} \mathds{P}^k \to \MSL \wedge \Sigma^{-2,-1} \Sigma^{\infty} \mathds{P}^{k+1} \]
In particular, we get a first quadrant spectral sequence of the following form
\begin{equation} \label{FSS}
  E_1^{p,q,n} = \pi_{n+p+q,n}(\Sigma^{2p,p} \MSL) \Longrightarrow \pi_{n+p+q,n}(\MGL)  
\end{equation}
with differentials of the following shape 
\[d_r: E_r^{p,q,n} \to E_r^{p-r,q-1,n} \]
To determine the $d_1$-differentials, we need to determine the maps
\[\pi_{n+p+q,n} (\Sigma^{2p,p} \MSL) \to \pi_{n+(p+q-1),n} (\Sigma^{2p-2,p-1} \MSL)\]
These maps are induced by the maps of the form 
\[\begin{tikzcd}[column sep=tiny,row sep=scriptsize]
	{\pi_{n+p+q,n}(\MSL \wedge \Sigma^{-2,-1} \Sigma^{\infty} (\mathds{P}^{p+1}/\mathds{P}^{p}) )} &&&&&& {\pi_{n+(p+q-1),n}(\MSL \wedge \Sigma^{-2,-1} \Sigma^{\infty} \mathds{P}^{p})} \\
	\\
	&&&&&& {\pi_{n+(p+q-1),n}(\MSL \wedge \Sigma^{-2,-1} \Sigma^{\infty} (\mathds{P}^{p}/\mathds{P}^{p-1}) )}
	\arrow[from=1-1, to=1-7]
	\arrow[from=1-7, to=3-7]
\end{tikzcd}\]
\begin{remark}

Let's look at the cellular chain complex of $\R\mathds{P}^n$. There is one cell in each dimension between $0$, and $n$. The periodicity we notice here is the main motivation for us to guess how some of the $d_1$-differential maps might look. Following \cite[Example 2.42]{Hatcher}, the cellular complex looks like 
\[\begin{tikzcd}[column sep=small,row sep=scriptsize]
	0 & {C_0(\mathbb{R}\mathds{P}^n)} & {C_1(\mathbb{R}\mathds{P}^n)} & {C_2(\mathbb{R}\mathds{P}^n)} & {...} & {C_n(\mathbb{R}\mathds{P}^n)} & 0 \\
	0 & \Z & \Z & \Z & {...} & \Z & 0 \\
	& {}
	\arrow[from=1-3, to=1-2]
	\arrow[from=1-4, to=1-3]
	\arrow[from=1-5, to=1-4]
	\arrow[from=1-6, to=1-5]
	\arrow[from=2-6, to=2-5]
	\arrow[from=1-7, to=1-6]
	\arrow[from=2-7, to=2-6]
	\arrow[from=2-5, to=2-4]
	\arrow[from=2-4, to=2-3]
	\arrow[from=2-3, to=2-2]
	\arrow[from=2-2, to=2-1]
	\arrow[from=1-2, to=1-1]
	\arrow["{=}", from=1-2, to=2-2]
	\arrow["{=}", from=1-3, to=2-3]
	\arrow["{=}", from=1-4, to=2-4]
	\arrow["{=}", from=1-6, to=2-6]
\end{tikzcd}\]
The differential is determined by the following map 
\[\begin{tikzcd}[column sep = tiny]
	{C_{r-1}} & {H_{r-1}(\mathbb{R}\mathds{P}^{r-1}, \mathbb{R}\mathds{P}^{r-2})} && {H_{r-1}(\mathbb{R}\mathds{P}^{r-1})} & {H_{r}(\mathbb{R}\mathds{P}^{r}, \mathbb{R}\mathds{P}^{r-1})} & {C_{r}} \\
	& {H_{r-1}(D^{r-1}/S^{r-2},*)} && {H_{r-1}(S^{r-1})} & {H_{r}(D^{r}, S^{r-1})}
	\arrow["{\pi_*}", from=2-4, to=1-4]
	\arrow["\cong", from=2-5, to=1-5]
	\arrow["\cong", from=2-2, to=1-2]
	\arrow["{=:}"', from=1-6, to=1-5]
	\arrow[from=1-5, to=1-4]
	\arrow[from=1-4, to=1-2]
	\arrow[from=2-4, to=2-2]
	\arrow["{:=}"', from=1-2, to=1-1]
	\arrow[from=2-5, to=2-4]
	\arrow[from=1-4, to=2-2]
\end{tikzcd}\]

Here $\pi: S^{r-1} \to \mathbb{R}\mathds{P}^{r-1}$ is the double cover. The map $S^{r-1} \to D^{r-1}/S^{r-2}$ factors as follows
\[\begin{tikzcd}[sep=scriptsize]
	{S^{r-1}} && {\mathbb{R}\mathds{P}^{r-1}} && {D^{r-1}/S^{r-2}} \\
	&& {S^{r-1}/S^{r-2} \cong S^{r-1} \vee S^{r-1}}
	\arrow["\pi", from=1-1, to=1-3]
	\arrow[from=1-3, to=1-5]
	\arrow["\alpha"', from=1-1, to=2-3]
	\arrow[from=2-3, to=1-5]
\end{tikzcd}\]
One component of $\alpha$ is identity, and the other component is the antipodal map. The degree of the antipodal map on $S^n$ is $(-1)^{n+1}$. So, the cellular complex looks like 
\[0 \xleftarrow{0} \Z \xleftarrow{2} \Z \xleftarrow{0} \Z \xleftarrow{2} \Z... \xleftarrow{} \Z \xleftarrow{} 0\]
\end{remark}
Now onwards, we work in $\SH(k)$, where $k$ is a field of characteristic other than $2$. 
\begin{remark}
    We have two fiber sequences \cite[p.11]{AsokFaselEta}
    \[\text{SL}_{n}/\text{SL}_{n-1} \to \text{BSL}_{n-1} \to \text{BSL}_{n}\]
    and 
    \[\text{SL}_{n-1} \to \text{SL}_{n} \to \text{SL}_{n}/\text{SL}_{n-1} \]
    We consider the following composite
    \[\Omega^1_s \text{SL}_{n}/\text{SL}_{n-1} \to \Omega^1_s \text{BSL}_{n-1} \cong \text{SL}_{n-1} \to \text{SL}_{n-1}/\text{SL}_{n-2} \]
    Applying the functor $\underline{\pi}_{n-2}^{\mathbb{A}^1}$, we get 
    \[\underline{K}^{MW}_{n}\cong \underline{\pi}_{n-2}^{\mathbb{A}^1}(\Omega^1_s \text{SL}_{n}/\text{SL}_{n-1}) \xrightarrow{\delta_{n-1}} \underline{\pi}_{n-2}^{\mathbb{A}^1}(\text{SL}_{n-1}) \xrightarrow{q_{n-2}} \underline{\pi}_{n-2}^{\mathbb{A}^1}(\text{SL}_{n-1}/\text{SL}_{n-2}) \cong \underline{K}^{MW}_{n-1} \]
    Asok-Fasel explicitly described the composition $q_{n-2} \circ \delta_{n-1} $.
\begin{lemma}\label{AFeTA} \cite[Lemma 3.5]{AsokFaselEta}
 The composition $q_{n-2} \circ \delta_{n-1}$ is $\eta$, when $n$ is odd, and $0$ for even $n$.   
\end{lemma}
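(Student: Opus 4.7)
The plan is to realize the composition $q_{n-2} \circ \delta_{n-1}$ as an explicit map between motivic spheres, then invoke Morel's classification of morphisms between Milnor--Witt $K$-theory sheaves to reduce the problem to computing a single coefficient in the Grothendieck--Witt ring. First I would use the evaluation $g \mapsto g \cdot e_n$ to identify $\SL_n/\SL_{n-1}$ with $\A^n \setminus \{0\}$, which is $\A^1$-equivalent to $S^{2n-1,n}$. Under this identification the $\SL_{n-1}$-torsor $\SL_n \to \SL_n/\SL_{n-1}$ has an explicit description in terms of column data, and composing with the further quotient onto $\SL_{n-1}/\SL_{n-2} \simeq S^{2n-3, n-1}$ gives a concrete based map at the space level.

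Taking $\underline{\pi}^{\A^1}_{n-2}$ on both sides and applying Morel's identification $\underline{\pi}^{\A^1}_{n-1}(S^{2n-1,n}) \cong \underline{K}^{MW}_n$, the lemma becomes the question of determining which morphism $\underline{K}^{MW}_n \to \underline{K}^{MW}_{n-1}$ of strictly $\A^1$-invariant sheaves is induced. By Morel's computation of Hom-sheaves, such a morphism is determined by its effect on the canonical generator, and the only natural candidates are $0$ and multiplication by $\eta$ (up to a unit in $\text{GW}(k)$). So only a sign-type coefficient needs to be determined.

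The heart of the argument is then the explicit degree calculation. I would cover $\A^n \setminus \{0\}$ by two opposite ``hemisphere-like'' charts, whose overlap is a $\Gm$-bundle over $\A^{n-1} \setminus \{0\}$. Choosing sections of $\SL_n \to \A^n \setminus \{0\}$ over each chart and comparing on the overlap produces, after passing to $\SL_{n-1}/\SL_{n-2}$, a self-map of $\A^{n-1} \setminus \{0\}$ that is either the identity or the involution $v \mapsto -v$, depending on the parity of $n$. The motivic degree of the antipodal involution on $\A^{n-1} \setminus \{0\}$ equals $\langle (-1)^{n-1}\rangle \in \text{GW}(k)$, and feeding this into the Milnor--Witt relation $\langle -1\rangle - 1 = \eta\cdot[-1]$ yields the claimed dichotomy: the coefficient vanishes when $n$ is even (both sections glue compatibly up to $\langle 1\rangle$) and becomes a unit multiple of $\eta$ when $n$ is odd.

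The main obstacle is sign-bookkeeping in the last step. The orientation of the fiber sequence, the convention for $\Omega^1_s$, and the identification of the canonical generator of $\underline{\pi}^{\A^1}_{n-1}(S^{2n-1,n})$ with $1 \in \underline{K}^{MW}_n$ each threaten to flip the parity of the answer, and without care one could easily swap ``$\eta$ for odd $n$'' with ``$\eta$ for even $n$''. A safer alternative is to verify the lemma directly for the low-dimensional case $n = 2$ via an explicit matrix computation in $\SL_2$, where $\underline{K}^{MW}_2 \to \underline{K}^{MW}_1$ can be read off from the commutator structure of $\SL_2$, and then bootstrap to higher $n$ using the compatibility of the constructions with the stabilization maps $\SL_{n-1} \hookrightarrow \SL_n$.
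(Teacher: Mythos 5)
The paper does not prove this lemma; it imports it verbatim from Asok--Fasel \cite[Lemma 3.5]{AsokFaselEta}. So there is no internal argument to compare against, and your proposal has to stand on its own.

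The general strategy you outline is reasonable in spirit --- identify $\SL_n/\SL_{n-1}$ with $\A^n\setminus 0$, recognize the composite as a stable map $S^{2n-1,n}\to S^{2n-2,n-1}$, invoke Morel's identification $[\Sigma^\infty S^{2n-1,n},\Sigma^\infty S^{2n-2,n-1}]\cong K^{MW}_{-1}(k)\cong W(k)\cdot\eta$, and reduce to pinning down a single coefficient. That framing is fine and matches how one would want to read a proof of this statement. But the heart of the matter, the parity dichotomy, is exactly the place where your sketch does not close.

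Two concrete problems. First, there is a weight mismatch in your degree step. The motivic Brouwer degree of a self-map of $\A^{n-1}\setminus 0$ lives in $\pi_{0,0}(\mathds{1})\cong GW(k)$, i.e. it classifies endomorphisms $\underline{K}^{MW}_{n-1}\to\underline{K}^{MW}_{n-1}$. The map you need to compute is a morphism $\underline{K}^{MW}_{n}\to\underline{K}^{MW}_{n-1}$, which stably is a class in $\pi_{1,1}(\mathds{1})\cong W(k)$, a different group. Saying ``the clutching data produces a self-map of $\A^{n-1}\setminus 0$ whose degree is $\langle(-1)^{n-1}\rangle$'' therefore does not directly supply the $W(k)$-coefficient of $\eta$ you are after; you still need an explicit identification of how that degree class in $GW(k)$ gets transformed, under the extra $\Gm$-suspension hidden in the connecting map, into an element of $W(k)$. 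The relation $\langle-1\rangle-1=\eta[-1]$ you invoke does not by itself effect that passage; $\eta[-1]\in K^{MW}_0(k)$ is still in weight zero. As written, the punchline ``becomes a unit multiple of $\eta$'' is asserted rather than derived, and in fact the two parities you distinguish both give $\langle(-1)^{n-1}\rangle=\langle 1\rangle$ when $n$ is odd, which if anything points toward the opposite of the dichotomy you want.

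Second, your fallback is broken. For $n=2$ the target is $\underline{\pi}_0(\SL_1/\SL_0)$, and $\SL_1=\SL_0=\{1\}$, so the composite is trivially zero and carries no information; the first nontrivial case is $n=3$. More seriously, ``bootstrap to higher $n$ using compatibility with the stabilization maps $\SL_{n-1}\hookrightarrow\SL_n$'' cannot produce the alternating pattern: compatibility with stabilization would, if anything, propagate the same answer to all $n$, whereas the lemma claims the answer flips between $\eta$ and $0$ with each step. An induction that respects the parity would need an additional input (such as the paper's geometric commuting square with $\psi_n:\Sigma^{1,1}\mathcal{Q}_{2n-1}/\Gm\to\SL_n$ and the cofiber sequence $\SL_n\to\SL_{n+1}\to\Sigma^{2n+1,n+1}(\SL_n)_+$, or Asok--Fasel's own Euler-class identification of the connecting map), and that input is precisely the missing ingredient. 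I would recommend either carrying out the explicit matrix computation for $n=3$ and $n=4$ honestly, or reproducing Asok--Fasel's argument rather than trying to rederive it from a heuristic clutching picture.
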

\end{remark}
\begin{remark}
Let's define $\mathcal{Q}_{2n-1}$ to be the quadric hypersurface in $\mathds{A}^n \times \mathds{A}^n$, defined by the equation $\sum_{i=1}^n x_iy_i =1$. We consider the $\mathds{G}_m$-action on $\mathcal{Q}_{2n-1}$, given by $\lambda(a,b) = (\lambda a, \lambda^{-1} b)$. The quotient scheme $\mathcal{Q}_{2n-1}/\mathds{G}_m$ is weakly equivalent to $\mathds{P}^{n-1}$ \cite[p.1174]{AutomorphismP2}. R\"ondigs has constructed certain maps $\psi_n: \Sigma^{1,1} \mathcal{Q}_{2n-1}/\mathds{G}_m \to \SL_n$, and proved that the following diagram commutes \cite[4.2]{AutomorphismP2}
\[\begin{tikzcd}
	{\Sigma^{1,1}\mathds{P}^{n-1}} & {\Sigma^{1,1} \mathcal{Q}_{2n-1}/\mathds{G}_m} && {\SL_n} \\
	{\Sigma^{1,1}\mathds{P}^{n}} & {\Sigma^{1,1} \mathcal{Q}_{2n+1}/\mathds{G}_m} && {\SL_{n+1}}
	\arrow[hook, from=1-1, to=2-1]
	\arrow["\cong"', from=1-2, to=1-1]
	\arrow["{\psi_n}", from=1-2, to=1-4]
	\arrow[hook, from=1-2, to=2-2]
	\arrow[hook, from=1-4, to=2-4]
	\arrow["\cong", from=2-2, to=2-1]
	\arrow["{\psi_{n+1}}", from=2-2, to=2-4]
\end{tikzcd}\]
R\"ondigs also showed that the inclusion $\text{SL}_n \to \text{SL}_{n+1}$ sits in the following cofiber sequence 
\[\text{SL}_n \to \text{SL}_{n+1} \to \Sigma^{2n+1,n+1} (\text{SL}_n)_+ \]
Using this cofiber sequence R\"ondigs showed that in the following commutative diagram, where the top row is a (homotopy) cofiber, and the bottom row is a (homotopy) fiber sequence; the right most vertical map is a weak equivalence \cite[Corollary 4.3]{AutomorphismP2}.
\[\begin{tikzcd}
	{\Sigma^{1,1} \mathcal{Q}_{2n-1}/\mathds{G}_m} && {\Sigma^{1,1} \mathcal{Q}_{2n+1}/\mathds{G}_m} && {S^{2n+1,n+1}} \\
	{\text{SL}_n} && {\text{SL}_{n+1}} && {\text{SL}_{n+1}/\text{SL}_n}
	\arrow[from=1-1, to=1-3]
	\arrow["{\psi_n}", from=1-1, to=2-1]
	\arrow[from=1-3, to=1-5]
	\arrow["{\psi_{n+1}}"', from=1-3, to=2-3]
	\arrow["\cong", from=1-5, to=2-5]
	\arrow[from=2-1, to=2-3]
	\arrow[from=2-3, to=2-5]
\end{tikzcd}\]
We have the following commutative diagram 
\[\begin{tikzcd}[sep=small]
	{\underline{K}^{MW}_{n}} & {\underline{\pi}_{n-1}^{\mathbb{A}^1}(\text{SL}_{n}/\text{SL}_{n-1})} && {\underline{\pi}_{n-2}^{\mathbb{A}^1}(\text{SL}_{n-1})} && {\underline{\pi}_{n-2}^{\mathbb{A}^1}(\text{SL}_{n-1}/\text{SL}_{n-2})} & {\underline{K}^{MW}_{n-1}} \\
	\\
	& {\underline{\pi}_{n-1}^{\mathbb{A}^1}(\Sigma^{1,1} \mathbb{P}^{n-1}/\mathbb{P}^{n-2})} && {\underline{\pi}_{n-2}^{\mathbb{A}^1}(\mathbb{P}^{n-2})} && {\underline{\pi}_{n-2}^{\mathbb{A}^1}(\Sigma^{1,1}\mathbb{P}^{n-2}/\mathbb{P}^{n-3})}
	\arrow["\cong", from=3-2, to=1-2]
	\arrow["{\delta_{n-1}}", from=1-2, to=1-4]
	\arrow[from=3-2, to=3-4]
	\arrow[from=3-4, to=1-4]
	\arrow["{q_{n-2}}", from=1-4, to=1-6]
	\arrow[from=3-4, to=3-6]
	\arrow["\cong"', from=3-6, to=1-6]
	\arrow["\cong", from=1-1, to=1-2]
	\arrow["\cong", from=1-6, to=1-7]
\end{tikzcd}\]
The proof of \cite[Corollary 4.3]{AutomorphismP2} works the same if one stabilizes the two cofiber sequences 
\[\text{SL}_n \to \text{SL}_{n+1} \to \Sigma^{2n+1,n+1} (\text{SL}_n)_+\]
and 
\[\Sigma^{1,1} \mathcal{Q}_{2n-1}/\mathds{G}_m \to \Sigma^{1,1} \mathcal{Q}_{2n+1}/\mathds{G}_m \to S^{2n+1,n+1}\]
and smash both with $\MSL$.
\end{remark}
Following \cite[Remark 6.4.2]{MorelBook} For $n\geq 2$, and $i \geq 1$ 
\begin{equation}\label{Morel_stable_unstable}
  \underline{\pi}_{n}^{\mathbb{A^1}}(S_s^{n} \wedge \mathds{G}_m^i) \cong \underline{K}^{MW}_{i}  
\end{equation}
The $d_1$-differentials we are interested in are completely determined by the maps described by the vertical curved arrows in the following diagram after stabilization. The leftmost map is by construction $\eta$. 
\[\begin{tikzcd}
	{S^{3,2}} & {S^{5,3}} & {S^{7,4}} \\
	{\mathds{A}^2\backslash0} & {\mathds{A}^3\backslash0} & {\mathds{A}^4\backslash0} & {....} \\
	{\mathds{P}^1} & {\mathds{P}^2} & {\mathds{P}^3} & {.....} \\
	{\mathds{P}^1/\mathds{P}^0} & {\mathds{P}^2/\mathds{P}^1} & {\mathds{P}^3/\mathds{P}^2} & {....} \\
	{S^{2,1}} & {S^{4,2}} & {S^{6,3}}
	\arrow["\cong", from=1-1, to=2-1]
	\arrow["\cong", from=1-2, to=2-2]
	\arrow["\cong", from=1-3, to=2-3]
	\arrow[from=2-1, to=3-1]
	\arrow["\eta"', bend right, from=2-1, to=4-1]
	\arrow[from=2-2, to=3-2]
	\arrow[ bend left, from=2-2, to=4-2]
	\arrow[from=2-3, to=3-3]
	\arrow[ bend left, from=2-3, to=4-3]
	\arrow[hook, from=3-1, to=3-2]
	\arrow["{=}", from=3-1, to=4-1]
	\arrow[hook, from=3-2, to=3-3]
	\arrow[from=3-2, to=4-2]
	\arrow[hook, from=3-3, to=3-4]
	\arrow[from=3-3, to=4-3]
	\arrow["\cong", from=4-1, to=5-1]
	\arrow["\cong", from=4-2, to=5-2]
	\arrow["\cong", from=4-3, to=5-3]
\end{tikzcd}\]
From \cref{Morel_stable_unstable}, $[S^{2(n+1)-1,n+1}, S^{2n,n}]_{\mathcal{H}(k)_{*}} \cong [\Sigma^{\infty} S^{2(n+1)-1,n+1}, \Sigma^{\infty} S^{2n,n}]_{\SH(k)}$, for $n\geq 2$. We denote the pointed unstable motivic homotopy category over a field $k$ by $\mathcal{H}(k)_{*}$. Using \cref{AFeTA}, these maps are alternatively $\eta$, and $0$.\\[1.000000ex]
Using the previous identification, the $E_1$-page with some of the $d_1$-differentials is shown in the following picture \cref{EonePAGE}. Here, the $E^1$-page is shown for a fixed $n$. We have written $E^1_{p,q,n} = \pi_{n+p+q,n}(\Sigma^{2p,p} \MSL)$ as $\pi_{p+q}(\Sigma^{2p,p} \MSL)$ for notational convenience.\\[1.000000ex] 
\begin{equation} \label{EonePAGE}
 \begin{tikzpicture}
  \matrix (m) [matrix of math nodes,
    nodes in empty cells,nodes={minimum width=5ex,
    minimum height=5ex,outer sep=-5pt},
    column sep=1ex,row sep=1ex]{
                &      &     &     &  &\\
          p    &        &    &      0& ...&\\      
         2     &        & 0   &  \pi_0(\Sigma^{2,2}\MSL)    &... &\\
          1     &  0 &  \pi_0(\Sigma^{1,1}\MSL)  & \pi_1(\Sigma^{1,1}\MSL) &...  &\\
          0     &  \pi_0(\MSL)  & \pi_1(\MSL) &  \pi_2(\MSL)  &...  &\\
    \quad\strut &   0  &  1  &  2  & p+q &\strut \\
          };
          \draw[thick] (m-1-1.east) -- (m-6-1.east) ;
\draw[thick] (m-6-1.north) -- (m-6-6.north)  ;
\draw[-stealth] (m-4-3) -- (m-5-2) node[midway, left]{$\eta$};
\draw[-stealth] (m-4-4) -- (m-5-3) node[midway, left]{$\eta$};
\draw[-stealth] (m-4-5) -- (m-5-4) node[midway, left]{$\eta$};
\draw[-stealth] (m-3-4) -- (m-4-3) node[midway, left]{$0$};
\draw[-stealth] (m-3-5) -- (m-4-4) node[midway, left]{$0$};
\draw[-stealth] (m-2-5) -- (m-3-4) node[midway, left]{$\eta$};
\end{tikzpicture}   
\end{equation}

For some $E \in \SH(k)$, let's denote the global section of the homotopy module $\underline{\pi}_0(E)(\text{Spec}(k)) \cong \bigoplus_n \underline{\pi}_{n,n}(E)(\text{Spec}(k))$ as ${\pi}_0(E)$. As no higher differential goes to the $(0,0)$-th place, from \cref{EonePAGE}, we have that 
\begin{equation} \label{piZEROmsl}
 {\pi}_0(\MSL)/\eta \cong {\pi}_0(\MGL) \cong K^{M}_*(k)   
\end{equation}
\begin{remark} \label{MSLnorm}
    Let's consider $\MSL$ as a $T^2$-spectrum. Using a geometric argument like \cref{etaMGL}, it can be shown that the cofiber of the unit map $\mathds{1} \to \MSL$ lies the subcategory $\Sigma^{4,2}\SH(k)^{\text{veff}}$ \cite[Example 16.35]{normsinmotivichomotopy}. Here $(\SH(k))^{\text{veff}}$ denotes Spitzweck's very effective subcategory. It is the (non-triangulated) subcategory of $\SH(k)$ generated by $\{\Sigma^{\infty} X_+ : X \in \text{Sm}_k\}$ under colimits and extension. The idea is outlined below.\\[1.000000ex]
    The skeletal filtration of $\MSL$ starts with the unit map. The map from the $n$-th to the $(n+1)$-th skeleton looks like the following
    \begin{equation}\label{skeletalMSL}
     \Sigma^{4,2} \MSL_{2n} \to \MSL_{2(n+1)}   
    \end{equation}
    The map is induced by taking a sequential colimit of maps of Thom spaces of vector bundles of rank $2(n+1)$, where the bundle is restricted from the higher dimensional special linear grassmannian to the lower dimensional one. The cofiber of the map \cref{skeletalMSL} lies in $\Sigma^{4n,2n}\SH(k)^{\text{veff}}$.
     
    From the previous discussion, as the cofiber of the unit map of $\MSL$ lies in $\Sigma^{4n,2n}\SH(k)^{\text{veff}}$, it is $1$-connective. So, the unit map induces the following isomorphism. \[\underline{\pi}_0(\MSL) \cong \underline{\pi}_0(\mathds{1}) \cong \underline{K}^{MW}\]
    This is also explicitly shown in \cite{Yakerson_2021} over characteristic $0$ fields.
    As $K^{MW}_*(k)/\eta \cong K^{M}_*(k)$, \cref{piZEROmsl} matches with the well-known computation.
\end{remark}
For the following \cref{factorizationOfUnitOfkq}, and \cref{pi1msloliver}, we work over fields of characteristic other than $2$, and fields of characteristic $0$ respectively. Here $\text{KQ}$ denotes Hornbostel's Hermitian $K$-theory spectrum \cite[Section 3]{slicesofhermitianktheory}.
\begin{lemma}\label{factorizationOfUnitOfkq}
 Let $\textbf{kq}$ denote the very effective cover of $\text{KQ}$. The unit map of $\textbf{kq}$ factors through $\MSL$.
\end{lemma}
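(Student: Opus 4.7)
The plan is to combine the universal property of $\MSL$ as classifier of $\SL$-oriented theories with the very effectivity of $\MSL$. First, $\text{KQ}$ admits a normalized $\SL$-orientation: it carries a symplectic orientation in the sense of \cite{PW}, and the Pfaffian canonically trivializes the determinant of any symplectic bundle, producing $\SL$-Thom classes satisfying the axioms of \cref{SLorientation}. By \cref{MSLuniversal}, this orientation corresponds to a morphism $\phi\colon \MSL\to\text{KQ}$ in $\SH(k)$ that carries every $\MSL$ Thom class to the corresponding $\text{KQ}$ Thom class. In particular, the normalization condition $\thom(\mathcal{O}_X)=\Sigma_T(1)$ applied to the trivial bundle over $\text{Spec}(k)$ forces the identity $\phi\circ u_{\MSL}=u_{\text{KQ}}$, where $u_{(-)}$ denotes the unit map.

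Next I would verify that $\MSL\in\SH(k)^{\text{veff}}$. By \cref{MSLnorm}, the homotopy cofiber of $\mathds{1}\to\MSL$ lies in $\Sigma^{4,2}\SH(k)^{\text{veff}}\subseteq\SH(k)^{\text{veff}}$. Since $\mathds{1}\in\SH(k)^{\text{veff}}$ and the very effective subcategory is closed under extensions, $\MSL$ is very effective.

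By the universal property of the very effective cover, the canonical map $\textbf{kq}\to\text{KQ}$ induces a bijection $\Hom_{\SH(k)}(X,\textbf{kq})\xrightarrow{\cong}\Hom_{\SH(k)}(X,\text{KQ})$ for every $X\in\SH(k)^{\text{veff}}$. Applied to $X=\MSL$, the map $\phi$ factors uniquely as $\MSL\xrightarrow{\tilde\phi}\textbf{kq}\to\text{KQ}$. Applied to $X=\mathds{1}$, the unique lift of $u_{\text{KQ}}$ through $\textbf{kq}\to\text{KQ}$ is precisely $u_{\textbf{kq}}$; but the composite $\tilde\phi\circ u_{\MSL}\colon\mathds{1}\to\textbf{kq}$ post-composes to $\phi\circ u_{\MSL}=u_{\text{KQ}}$, so by uniqueness $\tilde\phi\circ u_{\MSL}=u_{\textbf{kq}}$. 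This gives the desired factorization
\[
u_{\textbf{kq}}\colon \mathds{1}\xrightarrow{\ u_{\MSL}\ } \MSL \xrightarrow{\ \tilde\phi\ } \textbf{kq}.
\]

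The only potentially delicate point is the normalization of the $\SL$-orientation on $\text{KQ}$, namely that the Thom class assigned to the trivial line bundle with its canonical trivialization coincides with $\Sigma_T(1)$. This is a standard compatibility for the Hermitian/symplectic Thom structure, but it must be extracted carefully from \cite{PW} (or from the treatment of $\SL$-oriented theories in \cite{ananyevskiy2019sloriented}) to ensure the hypothesis of \cref{MSLuniversal} is literally satisfied; everything else in the argument is formal.
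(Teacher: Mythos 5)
Your overall strategy is sound and in one respect more economical than the paper's: you verify the unit compatibility $\phi\circ u_{\MSL}=u_{\text{KQ}}$ directly from the normalization axiom $\thom(\mathcal{O})=\Sigma_T(1)$ together with the Thom-class preservation property of $\phi$ from \cref{MSLuniversal}, whereas the paper routes through $\text{MSp}$ and invokes the uniqueness of the monoid morphism $\text{MSp}\to\text{KQ}$ from \cref{MSpUniversal} to make both triangles commute. Your final step (lifting through $\textbf{kq}\to\text{KQ}$ using that $\mathds{1}$ and $\MSL$ are very effective) is the same as the paper's, and your justification that $\MSL$ is very effective via \cref{MSLnorm} and closure under extensions is correct.

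However, there is a genuine error in your first step, in the claim that $\text{KQ}$ admits a normalized $\SL$-orientation \emph{because} it carries a symplectic orientation and the Pfaffian trivializes determinants of symplectic bundles. That reasoning runs the wrong way. The Pfaffian shows that every symplectic bundle is canonically a special linear bundle, so the containment is $\{\text{symplectic bundles}\}\subset\{\SL\text{-bundles}\}$; consequently an $\SL$-orientation \emph{restricts} to a symplectic orientation (this is precisely how \cref{MSptoMSL} produces the map $\text{MSp}\to\MSL$), but a symplectic orientation does not automatically extend to Thom classes for arbitrary $\SL$-bundles, e.g. odd-rank ones, which admit no symplectic form at all. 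Thus what you produce are $\SL$-Thom classes only for the subclass of symplectic bundles, which does not satisfy the hypotheses of \cref{SLorientation} or \cref{MSLuniversal}. The fact that $\text{KQ}$ is $\SL$-oriented is a non-formal input, due to Ananyevskiy and cited in the paper as \cite{ananyevskiy2019sloriented}; once you replace your derivation with that citation, the rest of your argument goes through as written.
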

\begin{proof}
We have seen that $\MSL$ can be given an $\text{Sp}$-orientation (\cref{MSptoMSL}) $\psi_{\MSL}$, and Hermitian $K$-theory $\text{KQ}$ can be given an $\text{SL}$-orientation $\phi_\text{KQ}$ \cite{ananyevskiy2019sloriented}. From the universal properties of $\text{MSp}$, and $\MSL$, as described in \cref{MSpUniversal} and \cref{MSLuniversal}, we get the following. The commutativity of the diagram is explained below.
\begin{equation}\label{pi1MSLcomm}
\begin{tikzcd}[sep=scriptsize]
	{\mathds{1}} && {\text{MSp}} && \MSL \\
	\\
	&&&& {\text{KQ}}
	\arrow["{u_{\text{MSp}}}", from=1-1, to=1-3]
	\arrow["{\psi_{\MSL}}", from=1-3, to=1-5]
	\arrow["{\psi_{\text{KQ}}}", from=1-3, to=3-5]
	\arrow["{\phi_{\text{KQ}}}", from=1-5, to=3-5]
	\arrow["{u_{\text{KQ}}}"', from=1-1, to=3-5]
\end{tikzcd}
\end{equation}
In the previous diagram \cref{pi1MSLcomm}, for a commutative ring spectrum $E$, $u_E$, $\psi_E$, and $\phi_E$ denote the unit map, a fixed $\text{Sp}$-orientation, and a fixed $\SL$-orientation of $E$ respectively.\\[1.000ex]
As $\psi_{\MSL}$ is a monoidal map, we have the following factorization of the unit map of $\MSL$.
\[\begin{tikzcd}[sep=scriptsize]
	{\mathds{1}} && \MSL \\
	& {\text{MSP}}
	\arrow["{u_{\text{MSp}}}"', from=1-1, to=2-2]
	\arrow["{u_{\MSL}}", from=1-1, to=1-3]
	\arrow["{\psi_{\MSL}}"', from=2-2, to=1-3]
\end{tikzcd}\]
Similarly, as $\psi_{\text{KQ}}$ is a monoidal map, $\psi_{\text{KQ}}\circ u_{\text{MSp}} = u_{\text{KQ}}$.\\[1.0000ex]
We have uniqueness of a map from $\text{MSP}$, that determines a chosen $\text{Sp}$-orientation from \cref{MSpUniversal}. Using the uniqueness of $\psi_{\text{KQ}}$, we have $\phi_{\text{KQ}} \circ \psi_{\text{MSL}} = \psi_{\text{KQ}}$. So, the diagram \cref{pi1MSLcomm} commutes. In particular, we get that the unit map of $\text{KQ}$ factors through $\MSL$.
\begin{equation}\label{unitofKQ}
\begin{tikzcd}[sep=scriptsize]
	{\mathds{1}} && \MSL \\
	&& {\text{KQ}}
	\arrow["{u_{\MSL}}", from=1-1, to=1-3]
	\arrow["{u_{\text{KQ}}}"', from=1-1, to=2-3]
	\arrow["{\phi_{\text{KQ}}}", from=1-3, to=2-3]
\end{tikzcd}
\end{equation}
As $\mathds{1}$, and $\MSL$ are very effective spectra, \cref{unitofKQ} gives the following factorization of the unit map of $\textbf{kq}$.
\begin{equation}\label{unitofkqfinal}
\begin{tikzcd}[sep=scriptsize]
	{\mathds{1}} && \MSL \\
	&& {\textbf{kq}}
	\arrow["{u_{\MSL}}", from=1-1, to=1-3]
	\arrow["{u_{\textbf{kq}}}"', from=1-1, to=2-3]
	\arrow[from=1-3, to=2-3]
\end{tikzcd}   
\end{equation}
\end{proof}
The following \cref{pi1msloliver} is entirely based on Oliver R\"ondigs' ideas. The author is grateful to him for sharing this.\\[1.0000ex]
As before, for some $E \in \SH(k)$, we denote the global section of the homotopy module $\underline{\pi}_1(E)(\text{Spec}(k)) \cong \bigoplus_n \underline{\pi}_{n+1,n}(E)(\text{Spec}(k))$ as ${\pi}_1(E)$.
\begin{theorem}\label{pi1msloliver}
    Over a field of characteristic $0$, we have the following isomorphism
    \[\pi_{1}(\MSL) \cong \pi_{1}(\textbf{kq})\]
\end{theorem}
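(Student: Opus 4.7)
The plan is to exploit the factorization $u_{\textbf{kq}} = v \circ u_{\MSL}$ provided by Lemma \ref{factorizationOfUnitOfkq}, where $v: \MSL \to \textbf{kq}$, together with the connectivity of the unit $u_{\MSL}: \mathds{1} \to \MSL$ and the known identification of $\pi_1(\mathds{1})$ with $\pi_1(\textbf{kq})$ through the unit over characteristic zero fields. The goal is to conclude that $v_*: \pi_1(\MSL) \to \pi_1(\textbf{kq})$ is an isomorphism by a formal diagram chase, so that no direct computation of either side is required.

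First, Remark \ref{MSLnorm} places the cofiber $C$ of $u_{\MSL}$ in $\Sigma^{4,2}\SH(k)^{\text{veff}} \subseteq \SH(k)_{\geq 2}$, so that the first-line homotopy sheaf $\underline{\pi}_{n+1,n}(C)$ vanishes. The long exact sequence of homotopy sheaves for $\mathds{1} \to \MSL \to C$, evaluated at $\mathrm{Spec}(k)$ — which remains exact because Nisnevich stalks are exact on strictly $\mathds{A}^1$-invariant sheaves over a field — produces a surjection $(u_{\MSL})_*: \pi_1(\mathds{1}) \twoheadrightarrow \pi_1(\MSL)$. Over a characteristic zero field, R\"ondigs-Spitzweck-{\O}stv{\ae}r's computation of the first stable motivic stems via the $\textbf{kq}$-resolution gives that the unit map induces an isomorphism $(u_{\textbf{kq}})_*: \pi_1(\mathds{1}) \xrightarrow{\cong} \pi_1(\textbf{kq})$. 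With both inputs in hand the diagram chase is automatic: from $v_* \circ (u_{\MSL})_* = (u_{\textbf{kq}})_*$ being an isomorphism together with surjectivity of $(u_{\MSL})_*$, the map $v_*$ is surjective; its injectivity follows because any $\alpha \in \ker v_*$ lifts to some $\beta \in \pi_1(\mathds{1})$ with $(u_{\textbf{kq}})_*(\beta) = v_*(\alpha) = 0$, forcing $\beta = 0$ and hence $\alpha = 0$.

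The main obstacle is securing the second input, namely the isomorphism $\pi_1(\mathds{1}) \cong \pi_1(\textbf{kq})$ through the unit map. This rests on R\"ondigs-Spitzweck-{\O}stv{\ae}r's deep analysis of the first motivic stable stems and is genuinely characteristic-zero in nature — for instance, it depends on the Hopkins-Morel-Hoyois identification of $s_0(\mathds{1}) \cong \HZ$ which is what makes the slice filtration tractable in that range. All remaining steps are formal consequences of standard connectivity arguments in the motivic homotopy $t$-structure combined with the factorization in Lemma \ref{factorizationOfUnitOfkq}; in particular, the filtration of $\MGL$ in terms of $\MSL$ established in this paper enters only indirectly, through the $2$-connectivity of the unit of $\MSL$ recorded in Remark \ref{MSLnorm}.
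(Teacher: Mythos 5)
Your proof proposal has a genuine gap. The key claim that the unit $u_{\textbf{kq}}: \mathds{1} \to \textbf{kq}$ induces an \emph{isomorphism} $\pi_1(\mathds{1}) \xrightarrow{\cong} \pi_1(\textbf{kq})$ is false: R\"ondigs-Spitzweck-\O{}stv\ae{}r only establish a \emph{surjection} $\pi_1(\mathds{1}) \twoheadrightarrow \pi_1(\textbf{kq})$, and the kernel is nontrivial. Concretely, the second motivic Hopf map $\nu \in \pi_{3,2}(\mathds{1}) \subseteq \pi_1(\mathds{1})$ is killed by any $\text{Sp}$-oriented theory (\cref{newSpvanish}), hence lies in $\ker(u_{\textbf{kq}})_*$, yet $\nu \neq 0$ in $\pi_1(\mathds{1})$ (e.g.\ complex realization sends it to the generator of $\pi_3(\mathds{S}) \cong \Z/24$). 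The presentations in the paper make this visible: $\pi_1(f_1\mathds{1}) \cong (K^{MW}_*(k)\{\nu\} \oplus K^{MW}_*(k)\{\eta^{top}\})/(\eta\nu, 2\eta^{top}, \eta^2\eta^{top}-12\nu)$ has an extra generator $\nu$ compared with $\pi_1(f_1\textbf{kq}) \cong K^{MW}_*(k)\{\eta^{top}\}/(2\eta^{top}, \eta^2\eta^{top})$.

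Because of this, your diagram chase breaks at the injectivity step: given $\alpha \in \ker v_*$ with lift $\beta \in \pi_1(\mathds{1})$ satisfying $(u_{\textbf{kq}})_*(\beta) = 0$, you cannot conclude $\beta = 0$, since $(u_{\textbf{kq}})_*$ is not injective. What the paper actually does is to show that the offending kernel element $\nu$ \emph{also} dies in $\pi_1(\MSL)$ — this is where the $\text{MSp} \to \MSL$ factorization and $\text{Sp}$-orientability of $\MSL$ from \cref{MSptoMSL} and \cref{newSpvanish} come in — so that $\pi_1(\MSL)$ is a quotient of $\pi_1(\mathds{1})$ by at least as much as $\pi_1(\textbf{kq})$ is, and then compares explicit presentations of $\pi_1(f_1\MSL)$ and $\pi_1(f_1\textbf{kq})$ together with $s_0(\mathds{1}) \cong s_0(\MSL) \cong s_0(\textbf{kq})$ and a five-lemma argument to conclude. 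Your connectivity input (\cref{MSLnorm}) and the factorization (\cref{factorizationOfUnitOfkq}) are both correct and are indeed used, but without the vanishing of $\nu$ in $\pi_1(\MSL)$ and the explicit presentations you have no way to rule out $v_*$ having a kernel.
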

\begin{proof}
    From the discussion in \cref{MSLnorm}, using long exact sequence of stable homotopy groups, we get that the unit map $\mathds{1} \to \MSL$ induces the following surjection
\[\pi_1(\mathds{1}) \twoheadrightarrow \pi_1(\MSL)\]
R\"ondigs-\O{}stv\ae r-Spitzweck showed that the unit map $\mathds{1} \to \textbf{kq}$ induces the following surjection \cite[Theorem 2.2]{roendigs2019remarks}
\[\pi_1(\mathds{1}) \twoheadrightarrow \pi_1(\textbf{kq})\]
From \cref{piSurj}, we get that the map described in \cref{factorizationOfUnitOfkq}, $\pi_1(\MSL) \to \pi_1(\textbf{kq})$ is a surjection.
\begin{equation}\label{piSurj}
\begin{tikzcd}[sep=scriptsize]
	{\pi_1(\mathds{1})} && {\pi_1(\MSL)} \\
	&& {\pi_1(\textbf{kq})}
	\arrow[two heads, from=1-1, to=1-3]
	\arrow[two heads, from=1-1, to=2-3]
	\arrow[from=1-3, to=2-3]
\end{tikzcd}
\end{equation}
Let $f_q E$ denote the $q$-th effective cover of a spectrum $E$. R\"ondigs explicitly computed $\pi_1(f_1 \textbf{kq})$, and $\pi_1(f_1 \mathds{1})$.\\[1.000000ex] 
The rows of \cref{obvious1} are exact. Using the fact from \cref{s0MSL} that $s_0(\mathds{1})\cong s_0(\MSL)$, from \cref{obvious1}, we get that the map $\pi_1(f_1\mathds{1}) \to \pi_1(f_1\MSL)$ is surjective.
\begin{equation}\label{obvious1}
\begin{tikzcd}[sep=small]
	{\pi_{2+*,*}s_0\mathds{1}} & {\pi_{1+*,*}f_1\mathds{1}} & {\pi_{1+*,*}\mathds{1}} & {\pi_{1+*,*}s_0\mathds{1}} \\
	\\
	{\pi_{2+*,*}s_0\MSL} & {\pi_{1+*,*}f_1\text{MSL}} & {\pi_{1+*,*}\text{MSL}} & {\pi_{1+*,*}s_0\MSL}
	\arrow["\cong", from=1-1, to=3-1]
	\arrow[two heads, from=1-3, to=3-3]
	\arrow["\cong", from=1-4, to=3-4]
	\arrow[from=1-1, to=1-2]
	\arrow[from=1-2, to=1-3]
	\arrow[from=1-3, to=1-4]
	\arrow[from=3-1, to=3-2]
	\arrow[from=3-2, to=3-3]
	\arrow[from=3-3, to=3-4]
	\arrow[from=1-2, to=3-2]
\end{tikzcd}   
\end{equation}
From the proof of \cite[Theorem 2.5]{roendigs2019remarks}, the map $\pi_1(f_1\mathds{1}) \to \pi_1(f_1\textbf{kq})$ is surjective. From \cref{unitofkqfinal}, we get that $\pi_1(f_1\MSL) \to \pi_1(f_1\textbf{kq})$ is surjective.\\[1.00000ex]
From\cite[Lemma 2.3]{roendigs2019remarks}, the $K^{MW}_*(k)$-module $\pi_1(f_1 \textbf{kq})$ is generated by the image of the first topological Hopf map $\eta^{\textit{top}} \in \pi_{1,0}(\mathds{1})$. This can be given the following presentation
\[K^{MW}_*(k)\{\eta^{\textit{top}}\}/(2\eta^{\textit{top}}, \eta^2\eta^{\textit{top}})\cong \pi_1(f_1 \textbf{kq})\]
From \cite[Theorem 2.5]{roendigs2019remarks}, $\pi_1(f_1 \mathds{1})$ can be given the following presentation.
\[(K^{MW}_*(k) \{\nu\} \oplus K^{MW}_*(k) \{\eta^{\textit{top}}\})/(\eta \nu, 2\eta^{\textit{top}}, \eta^2\eta^{\textit{top}}-12\nu) \cong \pi_1(f_1 \mathds{1})\]
Here $\nu \in \pi_{3,2}(\mathds{1})$ is the second stable Hopf map, as constructed in \cref{newSpvanish}. The element $\nu \in \pi_{3,2}(\mathds{1})$ can be lifted to an element $\nu \in \pi_{3,2}(f_1\mathds{1})$. From \cref{newSpvanish}, the unit map of $\MSL$ takes the element $\nu$ to $0 \in \pi_{3,2}(\MSL)$. Therefore, we get that $\pi_{1}(f_1\MSL)$ can be identified as a quotient of $\pi_{1}(f_1\textbf{kq})$. As, the map $\pi_{1}(f_1\MSL) \to \pi_{1}(f_1\textbf{kq})$ is surjective, we get the second vertical isomorphism in \cref{obvious2}.
\begin{equation}\label{obvious2}
\begin{tikzcd}[sep=small]
	{\pi_{2+*,*}s_0\MSL} & {\pi_{1+*,*}f_1\MSL} & {\pi_{1+*,*}\MSL} & {\pi_{1+*,*}s_0\MSL} \\
	\\
	{\pi_{2+*,*}s_0\textbf{kq}} & {\pi_{1+*,*}f_1\textbf{kq}} & {\pi_{1+*,*}\textbf{kq}} & {\pi_{1+*,*}s_0\text{kq}}
	\arrow["\cong", from=1-1, to=3-1]
	\arrow[two heads, from=1-3, to=3-3]
	\arrow["\cong", from=1-4, to=3-4]
	\arrow[from=1-1, to=1-2]
	\arrow[from=1-2, to=1-3]
	\arrow[from=1-3, to=1-4]
	\arrow[from=3-1, to=3-2]
	\arrow[from=3-2, to=3-3]
	\arrow[from=3-3, to=3-4]
	\arrow["\cong", from=1-2, to=3-2]
\end{tikzcd}  
\end{equation}
The rows in \cref{obvious2} are exact. We use from \cite[Theorem 3.2]{veffhermitian}  the fact that $s_0\mathds{1}\to s_0 \textbf{kq}$ is an isomorphism. The third column in \cref{obvious2} has to be an isomorphism. That finishes our proof.
\end{proof}
\begin{remark}
    Over a field of characteristic $p$, where $p$ is an odd prime, \cref{pi1msloliver} works after inverting the characteristic. In this case we have 
    \[\pi_{1}(\MSL[1/p]) \cong \pi_{1}(\textbf{kq}[1/p])\]
\end{remark}
\subsection{Some slices of $\MSL$}
As the slice filtration is triangulated, the slice functors $s_q$ preserve cofiber sequences. Applying $s_0$ on \cref{skeletal}, we get 
\begin{equation} \label{s0MSLactual}
  \begin{tikzcd}[column sep=tiny, row sep = small]
	{s_0(\MSL) } && {s_0(\MSL \wedge \Sigma^{-2,-1} \Sigma^{\infty} \mathds{P}^2)} && {s_0(\MSL \wedge \Sigma^{-2,-1} \Sigma^{\infty} \mathds{P}^3)} & {...} & {s_0(\MGL)} \\
	&& {s_0(\Sigma^{2,1} \MSL) \cong *} && {s_0(\Sigma^{4,2} \MSL) \cong *}
	\arrow["\cong", hook, from=1-1, to=1-3]
	\arrow["\cong",hook, from=1-3, to=1-5]
	\arrow["\cong",hook, from=1-5, to=1-6]
	\arrow[from=1-6, to=1-7]
	\arrow[from=1-3, to=2-3]
	\arrow[from=1-5, to=2-5]
\end{tikzcd}  
\end{equation}

\begin{corollary}\label{s0MSL}
 As clear from \cref{s0MSLactual}, over any Noetherian, finite dimensional base scheme, \[s_0(\MSL) \cong s_0(\MGL) \cong s_0(\mathds{1}) \] 
 In particular, over any field,
 \[s_0(\MSL) \cong s_0(\mathds{1}) \cong \HZ\]
\end{corollary}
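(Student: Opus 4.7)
The plan is to read off the corollary from the filtration in \cref{skeletal} (obtained from \cref{onlyresultZ}) by applying the zeroth slice functor $s_0$ termwise. The successive cofibers of that filtration are $\Sigma^{2k,k}\MSL$ for $k\geq 1$; since $\MSL$ is itself effective, these lie in $\Sigma_T\SH(S)^{\eff}$, and from the property $s_q(E)\cong\ast$ whenever $E\in\Sigma_T^{q+1}\SH(S)^{\eff}$ recalled in \cref{slices} we get $s_0(\Sigma^{2k,k}\MSL)\cong\ast$ for all $k\geq 1$. Triangulatedness of $s_0$ then forces every connecting map in \cref{skeletal} to be an $s_0$-equivalence, which is exactly what the schematic diagram \cref{s0MSLactual} records.

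Combining these stagewise equivalences with the identification $\MSL\wedge\Sigma^{-2,-1}\Sigma^{\infty}\mathds{P}^{\infty}\xrightarrow{\cong}\MGL$ from \cref{onlyresultZ}, and with the fact that $s_0$ commutes with the sequential homotopy colimit of the uniformly connective objects appearing here, gives the first isomorphism $s_0(\MSL)\cong s_0(\MGL)$. The second isomorphism $s_0(\MGL)\cong s_0(\mathds{1})$ is already noted in \cref{SlicesMGL}: the cofiber of the unit $\mathds{1}\to\MGL$ lies in $\Sigma^{2,1}\SH(S)^{\eff}$, so $s_0$ of the unit is an equivalence. Over a field one then invokes Levine's theorem $s_0(\mathds{1})\cong\HZ$ \cite{coniveau}, extended to essentially smooth schemes over a Dedekind domain by Bachmann--Hoyois \cite[Theorem B.4]{normsinmotivichomotopy}, both of which are cited in the paragraph preceding \cref{SlicesMGL}.

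The only mildly delicate point is the commutation of $s_0$ with the filtered colimit defining $\Sigma^{\infty}\mathds{P}^{\infty}$, and this is the main obstacle worth flagging; however, it can be avoided entirely. The homotopy cofiber of the unit map $\MSL\to\MGL$ is itself the filtered homotopy colimit of the $\Sigma^{2k,k}\MSL$ for $k\geq 1$, hence lies in the colimit-closed subcategory $\Sigma_T\SH(S)^{\eff}$, and is therefore killed by $s_0$ in one step. This yields $s_0(\MSL)\xrightarrow{\cong}s_0(\MGL)$ directly without invoking any commutation property, and the corollary follows.
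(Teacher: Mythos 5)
Your argument is correct and follows the same line as the paper's: apply $s_0$ to the skeletal filtration of \cref{skeletal}, observe that the graded pieces $\Sigma^{2k,k}\MSL$ ($k\geq 1$) lie in $\Sigma_T\SH(S)^{\eff}$ so are killed by $s_0$, conclude that every stage map is an $s_0$-equivalence, and then transfer to $s_0(\mathds{1})$ via \cref{SlicesMGL}. The one sharp thing you added — not simply asserting that $s_0$ commutes with the sequential colimit, but instead arguing that the cofiber of $\MSL\to\MGL$ lands in $\Sigma_T\SH(S)^{\eff}$ in one stroke — is a reasonable way to sidestep that point, though it can also be handled by noting that $s_0$ does preserve filtered homotopy colimits (the inclusion $\Sigma_T^q\SH^{\eff}\hookrightarrow\SH(S)$ preserves compact generators, so its right adjoint, and hence $f_q$ and $s_q$, commute with filtered colimits).

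One small inaccuracy to flag in your last paragraph: the cofiber of $\MSL\to\MGL$ is not literally the filtered colimit of the spectra $\Sigma^{2k,k}\MSL$; it is $\MSL\wedge\Sigma^{-2,-1}\Sigma^{\infty}(\mathds{P}^{\infty}/\mathds{P}^{1})$, i.e.\ the colimit of the objects $\cofib(\MSL\to\MSL\wedge\Sigma^{-2,-1}\Sigma^{\infty}\mathds{P}^{n})$, each of which has a finite filtration whose subquotients are the $\Sigma^{2k,k}\MSL$. The conclusion you draw is still valid because $\Sigma_T\SH(S)^{\eff}$ is a localizing (triangulated and colimit-closed) subcategory, hence closed under both the extensions occurring at finite stages and the ensuing colimit; but the membership in $\Sigma_T\SH(S)^{\eff}$ is established by that two-step argument, not by the cofiber literally being a colimit of the graded pieces.
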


Applying $s_1$ on \cref{skeletal}, we get 
\begin{equation} \label{s1first}\begin{tikzcd}[column sep=tiny,row sep=small]
	{s_1(\MSL) } && {s_1(\MSL \wedge \Sigma^{-2,-1} \Sigma^{\infty} \mathds{P}^2)} && {s_1(\MSL \wedge \Sigma^{-2,-1} \Sigma^{\infty} \mathds{P}^3)} & {...} & {s_1(\MGL)} \\
	&& {s_1(\Sigma^{2,1} \MSL) \cong \Sigma^{2,1} s_0(\MSL)} && {s_1(\Sigma^{4,2} \MSL) \cong *}
	\arrow[hook, from=1-1, to=1-3]
	\arrow["\cong", hook, from=1-3, to=1-5]
	\arrow["\cong", hook, from=1-5, to=1-6]
	\arrow[from=1-6, to=1-7]
	\arrow[from=1-3, to=2-3]
	\arrow[from=1-5, to=2-5]
\end{tikzcd}\end{equation}
From \cref{s1first}, we get that over a field of characteristic $0$, \begin{equation} \label{s1Smashed} s_1(\MSL \wedge \Sigma^{-2,-1} \Sigma^{\infty} \mathds{P}^2) \cong s_1(\MGL) \cong \Sigma^{2,1} \HZ\end{equation}
In the following commutative diagram \cref{s1Actual}
\begin{itemize}
    \item both of the columns are cofiber sequences.
    \item the map $s_1(\eta): s_1(\Sigma^{1,1}\mathds{1}) \cong \Sigma^{1,1} \HZ \to s_1(\mathds{1}) \cong \Sigma^{1,1} \HZ/2$ is the canonical projection map, when we work over characteristic $0$ fields \cite[Lemma 2.30]{annals_stable}.
    \item As mentioned in \cref{SlicesMGL}, $s_1(\textit{cone}(\eta)) \cong s_1(\MGL)$. \\As $\textit{cone}(\eta) \cong \Sigma^{-2,-1} \Sigma^{\infty} \mathds{P}^2$, from \cref{s1Smashed}, $s_1(\textit{cone}(\eta) \wedge \MSL) \cong s_1(\MGL)$.
    \item So, the map in the second row also has to be an isomorphism.
\end{itemize}
\begin{equation} \label{s1Actual}
\begin{tikzcd}[column sep=tiny]
	{\Sigma^{1,1} \HZ\cong \Sigma^{1,1} s_0(\mathds{1}) \cong s_1(\Sigma^{1,1}\mathds{1})\wedge_{s_0(\mathds{1} )}s_0(\MSL )} && {s_1(\Sigma^{1,1}\MSL) \cong \Sigma^{1,1}s_0(\MSL)\cong\Sigma^{1,1}\HZ} \\
	{\Sigma^{1,1}\HZ/2 \cong s_1(\mathds{1})\cong s_1(\mathds{1})\wedge_{s_0(\mathds{1})}s_0(\MSL)} && {s_1(\MSL)} \\
	{s_1(\textit{cone}(\eta))\wedge_{s_0(\mathds{1})}s_0(\MSL)} && {s_1(\textit{cone}(\eta) \wedge \MSL) \cong s_1(\MGL)} \\
	{\Sigma^{2,1} \HZ\cong s_1(\Sigma^{2,1}\mathds{1})\wedge_{s_0(\mathds{1})}s_0(\MSL)} && {s_1(\Sigma^{2,1}\MSL) \cong \Sigma^{2,1}\HZ}
	\arrow["\cong", from=1-1, to=1-3]
	\arrow[from=1-3, to=2-3]
	\arrow["{s_1(\eta)\wedge 1}"', from=1-1, to=2-1]
	\arrow[from=2-1, to=3-1]
	\arrow[from=2-3, to=3-3]
	\arrow[from=2-1, to=2-3]
	\arrow["\cong"', from=3-1, to=3-3]
	\arrow[from=3-1, to=4-1]
	\arrow[from=3-3, to=4-3]
	\arrow["\cong", from=4-1, to=4-3]
\end{tikzcd}\end{equation}
This discussion is summarized as follows.
\begin{corollary}\label{s1MSLfinal}
Over any Noetherian, finite dimensional base scheme \[s_1(\MSL) \cong s_1(\mathds{1}) \]
  In particular, over a field of characteristic $0$,
  \[s_1(\MSL) \cong s_1(\mathds{1}) \cong \Sigma^{1,1}\HZ/2\]
\end{corollary}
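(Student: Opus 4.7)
The plan is to deduce $s_1(\MSL)\cong s_1(\mathds{1})$ from \cref{onlyresultZ} and \cref{s0MSL} by comparing the first slice of two cofiber sequences, one coming from smashing $\eta$ with $\mathds{1}$ and one from smashing $\eta$ with $\MSL$.

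First, I would apply $s_1$ to the cofiber sequence
\[
\Sigma^{1,1}\MSL \xrightarrow{\eta \wedge 1} \MSL \to \MSL \wedge \mathds{1}/\eta.
\]
Since $s_1$ is triangulated and $s_q(\Sigma^{2,1} E)\cong \Sigma^{2,1}s_{q-1}(E)$, this yields a distinguished triangle
\[
\Sigma^{1,1}s_0(\MSL) \to s_1(\MSL) \to s_1(\MSL \wedge \mathds{1}/\eta) \to \Sigma^{2,1}s_0(\MSL).
\]
The analogous construction applied to $\mathds{1}$ itself gives
\[
\Sigma^{1,1}s_0(\mathds{1}) \to s_1(\mathds{1}) \to s_1(\mathds{1}/\eta) \to \Sigma^{2,1}s_0(\mathds{1}),
\]
and the unit map $\mathds{1}\to \MSL$ furnishes a map of triangles between the two.

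The outer vertical maps are isomorphisms by \cref{s0MSL}, which identifies $s_0(\mathds{1})$ and $s_0(\MSL)$ (via the unit). The remaining task is to show that the middle map on the right, namely
\[
s_1(\mathds{1}/\eta) \longrightarrow s_1(\MSL \wedge \mathds{1}/\eta),
\]
is also an isomorphism; once this is known, the five-lemma in $\SH(S)^{\heartsuit}$ applied to the long exact sequence of homotopy objects (or directly in the triangulated category) forces the middle vertical map $s_1(\mathds{1})\to s_1(\MSL)$ to be an isomorphism.

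To verify this key step, I would exploit \cref{onlyresultZ} together with \cref{SlicesMGL}. From \cref{SlicesMGL} the cofiber of $\mathds{1}/\eta \to \MGL$ lies in $\Sigma^{4,2}\SH(S)^{\text{eff}}$, so $s_1(\mathds{1}/\eta)\xrightarrow{\cong} s_1(\MGL)$. On the other hand, applying $s_1$ to the filtration \cref{skeletal}, the cofibers $\Sigma^{2k,k}\MSL$ for $k\geq 2$ contribute $\Sigma^{2k,k}s_{1-k}(\MSL)=\ast$, so $s_1(\MSL \wedge \Sigma^{-2,-1}\Sigma^{\infty}\mathds{P}^{n})\xrightarrow{\cong}s_1(\MGL)$ for every $n\geq 2$ by induction; in particular $s_1(\MSL \wedge \mathds{1}/\eta) \xrightarrow{\cong} s_1(\MGL)$ via the map induced by $\Psi$. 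The commutative square
\[
\begin{tikzcd}[sep=small]
\mathds{1}/\eta \ar[r] \ar[d] & \MSL \wedge \mathds{1}/\eta \ar[d, "\Psi"] \\
\MGL \ar[r, equal] & \MGL
\end{tikzcd}
\]
shows these two identifications with $s_1(\MGL)$ are compatible with the horizontal map of interest, which must therefore be an isomorphism. This gives $s_1(\MSL)\cong s_1(\mathds{1})$ over any Noetherian finite dimensional base. Over a characteristic zero field the additional identification $s_1(\mathds{1})\cong \Sigma^{1,1}\HZ/2$ (\cite[Lemma 2.30]{annals_stable}) completes the proof. The most delicate point is the compatibility in step three: one has to be careful that the two isomorphisms $s_1(\mathds{1}/\eta)\cong s_1(\MGL)$ and $s_1(\MSL\wedge\mathds{1}/\eta)\cong s_1(\MGL)$ are induced by a common target, so that the unit map $\mathds{1}\to\MSL$ really corresponds to the identity on $s_1(\MGL)$ after these identifications; this is exactly where \cref{onlyresultZ} is indispensable.
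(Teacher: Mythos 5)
Your proof is correct and follows essentially the same route as the paper: both compare the $s_1$ of the two $\eta$-cofiber sequences (for $\mathds{1}$ and for $\MSL$) via the unit map, use \cref{s0MSL} to identify the outer terms, and reduce to showing $s_1(\mathds{1}/\eta) \to s_1(\MSL\wedge\mathds{1}/\eta)$ is an isomorphism through the common identification with $s_1(\MGL)$ coming from \cref{SlicesMGL} and the filtration \cref{skeletal}. The paper's diagram \cref{s1Actual} phrases the left column in terms of $s_0(\MSL)$-modules, but since $s_0(\MSL)\cong s_0(\mathds{1})$ this is the same cofiber sequence you use, so the two arguments coincide up to notation.
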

\addcontentsline{toc}{section}{Bibliography}

\bibliographystyle{alpha}
\bibliography{references.bib}
\end{document}